\numberwithin{equation}{section}
\newcommand{\beq}{\begin{equation}}
\newcommand{\eeq}{\end{equation}}
\newcommand{\beqs}{\begin{eqnarray*}}
\newcommand{\eeqs}{\end{eqnarray*}}
\newcommand{\beqn}{\begin{eqnarray}}
\newcommand{\eeqn}{\end{eqnarray}}
\newcommand{\beqa}{\begin{array}}
\newcommand{\eeqa}{\end{array}}
\def\p{\partial }
\def\R{\Bbb R}
\def\Om{\Omega}
\def\pom{\p  \Omega}
\def\noo{\noindent}
\newtheorem{prop}{Proposition}[section]
\newtheorem{theo}[prop]{Theorem}
\newtheorem{lem}[prop]{Lemma}
\newtheorem{rem}[prop]{Remark}
\title  {The first boundary value problem for \\
Abreu's equation}
\begin{document}

\author {Bin Zhou}


\address{Bin Zhou, Centre for Mathematics and Its Applications,
Australian National University, Canberra, ACT 0200, Australia; and
\newline
 School of Mathematical Sciences, Peking
University, Beijing, 100871, China.}

\thanks {The author is supported by the State Scholarship
Fund of China.}

\subjclass {35J60}
\keywords{Abreu's equation, toric manifolds, variational problem, 
Monge-Amp\`ere equation, strict convexity.}

\email{bzhou@pku.edu.cn}


\bibliographystyle{plain}

\begin{abstract}
In this paper we prove the existence and regularity of solutions to 
the first boundary value problem for Abreu's equation, 
which is a fourth order nonlinear partial differential equation closely 
related to the Monge-Amp\`ere equation. 
The first boundary value problem can be formulated 
as a variational problem for the energy functional.
The existence and uniqueness of maximizers can be obtained 
by the concavity of the functional.
The main ingredients of the paper are the a priori estimates
and an approximation result, which enable us to prove that
the maximizer is smooth  in dimension 2.

\end{abstract}

\maketitle

\baselineskip=15.8pt
\parskip=3pt

\section {Introduction}

\vskip 10pt

Abreu's equation was first introduced by M.\ Abreu \cite{Ab} in the study of
existence of extremal metrics on toric K\"ahler manifolds.
It is a fourth order equation given by
\beq
\sum_{i, j=1}^n \frac{\partial^2u^{ij}}{\partial x_i\partial x_j}=f
\eeq
where $u$ is a convex function in a bounded domain $\Omega$
in $\mathbb R^n$, $f\in{L^\infty(\Omega)}$,
and $(u^{ij})$ is the inverse matrix of the Hessian $(u_{ij})$.
This equation was later studied by S.\ Donaldon.
In a series of papers \cite{D1, D2, D3, D4},
Donaldon established various a priori estimates for Abreu's equation
and proved the existence of constant scalar curvature metrics
on toric K\"ahler surfaces under the assumption of K-stability.

Abreu's equation can also be written as
\beq
U^{ij}w_{ij}=f,
\eeq
where $(U^{ij})$ is the cofactor matrix of $(u_{ij})$ and
\beq
w=[\det D^2u]^{-1}.
\eeq
The energy functional of Abreu's equation is given by
\beq
J_0(u)=A_0(u)-\int_{\Omega}fu\, dx,
\eeq
where
\beq
A_0(u)=\int_\Omega \log\det D^2u\, dx.
\eeq
We formulate a variational problem for Abreu's equation. Let
\beq
S[\varphi, \Omega]
=\{u\in{C^2(\Omega)\cap C^0(\overline \Omega)} \ |
\  u\  \text{is convex}\  u|_{\p \Omega}=\varphi(x),
Du(\Omega)\subset D\varphi(\overline\Omega)\},
\eeq
where $\varphi$ a smooth, uniformly convex function
defined in a neighborhood of $\overline \Omega$.
The problem is to find a function
$u$ in $S[\varphi, \Omega]$ such that
\beq
J_0(u)=\sup\{J_0(v)\  |\  v\in{S[\varphi, \Omega]}\}.
\eeq

The main result in this paper is as follows.

\begin{theo}
Suppose the domain $\Omega$ is bounded and smooth.
Assume $f\in C^{\infty}(\Omega)\cap L^\infty(\Omega)$.
If $n=2$, there exists a unique, smooth, locally uniformly convex maximizer $u$
of the variational problem (1.7).
\end{theo}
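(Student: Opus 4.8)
The plan is to split the proof into a soft variational part---existence and uniqueness of a maximizer in a relaxed admissible class---and a hard analytic part---interior regularity via a priori estimates and an approximation. For the variational part I would first enlarge $S[\varphi,\Omega]$ to the convex class $\overline{S}[\varphi,\Omega]$ of convex functions $u\in C^0(\overline\Omega)$ with $u|_{\partial\Omega}=\varphi$ and subdifferential image contained in $D\varphi(\overline\Omega)$, and extend $A_0$ by $A_0(u)=\int_\Omega\log\det D^2u\,dx$, where $D^2u$ denotes the density of the absolutely continuous part of the Monge-Amp\`ere measure of $u$. On $\overline{S}[\varphi,\Omega]$ the functional $J_0$ is concave, since $M\mapsto\log\det M$ is concave on positive symmetric matrices while $u\mapsto\int_\Omega fu$ is affine. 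The gradient constraint bounds the total Monge-Amp\`ere mass of any admissible $u$ by that of $\varphi$, so Jensen's inequality bounds $J_0$ from above; together with the uniform Lipschitz bound coming from the same constraint and the prescribed boundary values, the direct method applies: a maximizing sequence converges locally uniformly, along a subsequence, to some $u_\infty\in\overline{S}[\varphi,\Omega]$, and since the linear term is continuous and $A_0$ is upper semicontinuous under this convergence (by weak convergence of the Monge-Amp\`ere measures), $u_\infty$ is a maximizer. For uniqueness, if $u_0$ and $u_1$ are maximizers then concavity forces $t\mapsto J_0\bigl((1-t)u_0+tu_1\bigr)$ to be constant on $[0,1]$, hence $A_0$ affine along the segment; since $\tfrac{d^2}{dt^2}\log\det\bigl((1-t)M_0+tM_1\bigr)=-\bigl\|M_t^{-1/2}(M_1-M_0)M_t^{-1/2}\bigr\|^2$ vanishes only when $M_0=M_1$, the absolutely continuous parts of $D^2u_0$ and $D^2u_1$ agree a.e., and one then deduces $u_0=u_1$ from the boundary condition (using, for the singular parts, the regularity established below).

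The substance of the theorem is that for $n=2$ this unique maximizer $u$ is smooth and locally uniformly convex, and here I would argue by approximation rather than working with $u$ directly. I would introduce a family of approximating boundary value problems for Abreu's equation---smoothing $f$, replacing $\Omega$ and $\varphi$ by smooth uniformly convex data exhausting the given ones, and regularizing the functional if necessary so that the Euler-Lagrange problem is non-degenerate---each of which admits a smooth, locally uniformly convex solution $u_k$, obtained by the continuity method or degree theory for the fourth order equation once the estimates below are in hand. The core is a family of \emph{interior} a priori estimates for the $u_k$, uniform in $k$: (i) $C^0$ and $C^1$ bounds from the admissible-class constraints; (ii) a positive lower bound for $\det D^2u_k$, equivalently an upper bound for $w_k=(\det D^2u_k)^{-1}$, via the maximum principle applied to $U^{ij}(w_k)_{ij}=f$ with a suitably chosen auxiliary function; (iii) an interior modulus of strict convexity, hence interior $C^{1,\alpha}$ regularity by Caffarelli's theory for the Monge-Amp\`ere equation; and (iv) interior $C^{2,\alpha}$ estimates, and then, bootstrapping on the now uniformly elliptic linearized Monge-Amp\`ere operator governing $w_k$, interior $C^{\infty}$ estimates. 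Passing to a subsequential limit in $C^4_{\mathrm{loc}}(\Omega)$ produces an admissible maximizer which is smooth and locally uniformly convex in $\Omega$ and solves Abreu's equation; by the uniqueness above it coincides with $u$, so $u$ has the asserted regularity.

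I expect the main obstacle to be steps (ii)--(iii): the a priori lower bound on $\det D^2u$ together with interior strict convexity. This is exactly where the hypothesis $n=2$ is used essentially. In two dimensions one can exploit the partial Legendre transform to recast Abreu's equation as a more tractable system, extract the determinant bound from it, and then combine it with a Caffarelli-type section argument to exclude line segments in the interior of the graph of $u$; in higher dimensions the corresponding strict convexity can genuinely fail. A secondary difficulty is the interaction of the nonstandard gradient-image constraint $Du(\Omega)\subset D\varphi(\overline\Omega)$ with both the existence argument and the estimates---it may become active and produce free-boundary behaviour, and it forces a collar construction when one perturbs near $\partial\Omega$---together with the need to verify that the approximation scheme is consistent, namely that limits of the $u_k$ remain admissible and that the regularized energies converge to $J_0$, so that the limit may legitimately be identified with the variational maximizer and its regularity transferred.
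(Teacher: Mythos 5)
Your overall architecture (relaxed class, concavity/Jensen for existence and uniqueness, then regularity by approximation with uniform interior estimates) matches the paper's, but you treat as an optional remark the single device that makes the paper's scheme actually work, and two of your steps as stated have genuine gaps.

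First, the regularization of the integrand is not a "regularizing the functional if necessary" footnote: it is the central new idea. The paper constructs its smooth approximating solutions by solving a \emph{second} boundary value problem for the Euler equation with a penalty term pushing $u\to\varphi$ outside $\Omega$, and then proves that the limit maximizes the functional. That argument needs the property $\bigl|\int_E G(\det\partial^2 u)\,dx\bigr|\to 0$ as $|E|\to 0$ (see (2.9) and the estimates around (5.14)), which follows from $G$ being bounded below and sublinear at infinity. For $G=\log$ this fails outright since $\log d\to -\infty$ as $d\to 0$, and the paper's Remark 5.3 points out explicitly that the approximation does \emph{not} hold for $J_0$. So you cannot first produce the $J_0$-maximizer and then hope to approximate it; you must replace $\log$ by a family $G_{\delta_k}$ agreeing with $\log$ for $d\ge\delta_k$ but H\"older at $d=0$, run the whole existence/approximation/regularity machine uniformly in $k$, and only recover Abreu's equation in the limit $k\to\infty$. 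Relatedly, your proposal to get smooth approximants by "smoothing $f$, exhausting $\Omega$, and then continuity/degree for the fourth-order equation" is underspecified: the interior estimates of Sections 3--4 do not give boundary control for the first boundary value problem, and the continuity method requires some global estimate. The paper sidesteps this by solving the second BVP (8.1)--(8.4), where the Dirichlet data on $w=(\det D^2u)^{-1}$ yields the boundary estimate of Lemma 8.1, and only afterwards penalizing to drive $u\to\varphi$, $Du\to D\varphi$.

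Second, your uniqueness argument is circular as written: you invoke "the regularity established below" to kill the singular parts of the Monge--Amp\`ere measures of two maximizers, but that regularity is exactly what the rest of the proof is supposed to deliver and relies on the maximizer being unique (to identify limits of approximants with it). The paper instead proves directly (Lemma 2.5), by a comparison-function construction and the concavity/growth of $G$, that \emph{any} maximizer has purely absolutely continuous Monge--Amp\`ere measure; then equality of the regular Hessians a.e.\ plus uniqueness for the Dirichlet problem for $\det D^2u=\mu_r$ gives $u_0=u_1$. Finally, two smaller inaccuracies worth flagging: the paper uses the full Legendre transform (not the partial Legendre transform) to get the determinant bound from below, and the strict-convexity argument also needs the transformed equation after a rotation of $\mathbb{R}^{n+1}$ (Section 4), because Abreu's equation, unlike the affine maximal surface equation, is not unimodularly invariant in $\mathbb{R}^{n+1}$; moreover the "Case (b)" where the degenerate segment has both endpoints on $\partial\Omega$ requires the stronger approximation of Theorem 7.1 (with inflated $f$ near the boundary so the dual domain is exhausted), which your sketch does not anticipate.
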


The variational problem (1.7) corresponds to
the {\it first boundary value problem} for equation (1.1),
\beqn
u&=&\varphi \  \  \  \ \text{on} \  \partial \Omega,\\
Du&=&D\varphi \  \  \text{on} \  \partial \Omega.
\eeqn
Indeed, if  we have a classical, locally uniformly convex solution
$u\in {C^4(\Omega)\cap C^1(\overline\Omega)}$ to (1.1), (1.8) and (1.9), 
$u$ will also solve (1.7) uniquely. The uniqueness follows from the
concavity of the functional $A_0$.

\vskip 10pt

A motivation for our investigation of the above problem is that
the study of boundary value problems for elliptic equations has
been a focus of attention since 1950s. 
The Dirichlet problem for Monge-Amp\`ere type equations,
which is somehow related to our boundary condition (1.8) above, 
has been studied by many people, 
see \cite {CNS, GS1, Li, S, TW4, U1}.
The second boundary problem for the Monge-Amp\`ere equation,
which is related to our boundary condition (1.9) above,
has also been studied in \cite {Caf2, Del, U2}.

Another motivation to study the above problem is 
due to the increasing interest in nonlinear fourth order partial differential equations.
In recent years, nonlinear fourth order equations, such as 
the affine mean curvature equation and Willmore surface equation,
have attracted considerable attention.
Abreu's equation is similar to the  affine mean curvature equation,
which is given by
\beq
U^{ij}w_{ij}=f,
\eeq
where
\beq
 w=[\det D^2u]^{-(1-\theta)},  \  \  \theta=\frac{1}{n+2}.
\eeq
When $f=0$, (1.10) is called the affine maximal surfaces equation.
The energy functional of affine mean curvature equation is
\beq
J_\theta(u)=A_\theta(u)-\int_{\Omega}fu\, dx,
\eeq
where
\beq
A_\theta(u)=\int_\Omega [\det D^2u]^\theta \, dx
\eeq
is called {\it affine area functional} \cite{Cal, LR}.
In \cite{TW2, TW5}, N. Trudinger and X.-J. Wang studied
the first boundary value problem for the affine maximail surface equation, 
and the more general affine Plateau problem,
which can also be reduced to a similar variational problem.
In \cite{TW2}, Trudinger and Wang proved the existence
and uniqueness of smooth maximizers of $J_\theta$
in $S[\varphi, \Omega]$ in dimension 2.
Theorem 1.1 above is an analogue to their result.
Very recently, they also obtained the regularity of 
maximizers to the affine Plateau problem in high dimensions \cite{TW5}.

\vskip 10pt

Our proof of Theorem 1.1 is inspired by Trudinger and Wang's
variational approach and their regularity argument
in solving the affine Plateau problem.
But due to the singularity of the function $\log d$ near $d=0$,
the approximation argument in \cite{TW2, TW5}
does not apply directly to our problem.
To avoid this difficulty we introduce in Section 2 a sequence of
modified functionals $J_k$ to approximate $J_0$,
such that the integrand in $J_k$ is H\"older continuous at $d=0$.
We prove the existence and uniqueness of a maximizer
of the functional $J_k$ (Theorem 2.6) in the set
$\overline S[\varphi, \Omega]$,
the closure of $S[\varphi, \Omega]$
under uniform convergence.

The regularity of the maximizer is our main concern.
In Section 3 we establish a uniform (in $k$) a priori
estimates for the corresponding Euler equation of the functional $J_k$.
Unlike the affine maximal surface equation,
Abreu's equation is not invariant under linear transformation
of coordinates $\R^{n+1}$. When we rotate the coordinates in $\R^{n+1}$
we get a more complicated 4th order pde (\S4). 
In Section 4, we establish the uniform (in $k$) a priori
estimates for the equations obtained
after rotation of coordinates in $\R^{n+1}$.

As the maximizer may not be smooth, to apply the a priori estimates
we need to prove that the maximizer can be approximated by smooth solutions.
We cannot prove the approximation for the functional $J_0$ directly
as $\log d$ is singular near $d=0$. But for maximizers of $J_k$,
the approximation can be proved similarly as
for the affine Plateau problem \cite{TW2, TW5}.
The approximation solutions are constructed by considering
the {\it second boundary value problem}, 
namely the Euler equation of $J_k$ (see (2.6)) subject to
\beqn
u&=&\varphi \  \  \text{on} \  \partial \Omega,\\
w&=&\psi \  \  \text{on} \  \partial \Omega.
\eeqn
We can prove the existence of locally smooth solutions 
to the boundary value problem (2.6), (1.14) and (1.15),
in a way similar to that in \cite{TW2, TW5}.
For reader's convenience we include a proof in the Appendix.

The a priori estimates in Sections 3 and 4 rely on the strict convexity of solutions.
In Sections 6 and 7 we are devoted to the proof of the strict convexity of solutions.
The proof for one case is similar to that for
affine mean curvature equation in \cite{TW1, TW2}
and is included in Section 6.
But the proof for the other case uses the a priori estimates,
the Legendre transform and in particular a strong approximation
(Theorem 7.1) and is contained in Section 7.
\footnote{This paper was submitted to a journal for publication in June.
Recently, Chen-Li-Sheng posted a related paper \cite{CLS}. 
In their paper, the boundary value problem for (1.1)
with $u=\varphi$, $Du=\infty$ and $w=\infty$ was studied. 
They use solutions to the second boundary value  
problem of Abreu's equation directly as the approximating solutions.
Their approach does not apply
to the case considered in this paper.}

 \vskip 10pt

\noo{\bf Acknowledgement}
The author would like to thank 
Xu-Jia Wang for many inspiring discussions on this problem. 
He would also like to thank Xiaohua Zhu
and Neil Trudinger for their support and interest in the problem.

\vskip 20pt

\section{A modified functional}

\vskip 10pt

In this section we introduce a modified functional $J$ and
prove the existence and uniqueness of a maximizer of $J$.

We begin with some terminologies.
Let $u$ be a convex function in a domain
$\Om\subset\R^n$ and $z\in\Om$ be an interior point.
The {\it normal mapping} of $u$ at $z$, $N_u(x)$, is the set of gradients of the
supporting functions of $u$ at $x$, that is
$$N_u(x)=\{p\in\mathbb R^n\  |  \
u(y)\geq u(x)+p\cdot (y-x) \}.$$
For any subset $\Om'\subset\Om$, 
denote $N_u(\Om')=\bigcup_{x\in\Om'} N_u(x)$. 
If $u$ is $C^1$,
the normal mapping $N_u$ is exactly the gradient mapping $Du$.

For a convex function $u$ on $\Omega$,
the {\it Monge-Amp\`ere measure} 
$\mu[u]$  is a Radon measure given by
$$\mu[u](E)=|N_u(E)|$$
for any Borel set $E$. By a fundamental result of Aleksandrov,
$\mu[u]$ is weekly continuous with respect to the convergence of
convex functions \cite{P, TW3}.
It follows that if $\{u_j\}$ converges to $u$ in $L^1_{loc}$,
then for any closed $E\subset \Omega$,
\beq
\limsup_{j\rightarrow \infty}\mu [u_j](E)\leq \mu[u](E).
\eeq

\vskip 10pt

Since the set $S[\varphi, \Omega]$ is not closed,
we introduce
\beq
\overline S[\varphi, \Omega]
=\{u\in{C^0(\overline\Omega)} \ |
\  u\  \text{is convex}\  u|_{\p \Omega}=\varphi(x),
N_u(\Omega)\subset D\varphi(\overline\Omega)\}.
\eeq
Note that $\overline S[\varphi, \Omega]$ is closed under the
locally uniform convergence of convex functions.
In \cite{ZZ}, we proved that $A_0$ is well defined and
upper semi-continuous in another set of convex functions.
By a similar argument, we can also prove that
$A_0$ is well defined and
upper semi-continuous in $\overline S[\varphi, \Omega]$,
which implies the existence of a maximizer of $J_0$
in $\overline S[\varphi, \Omega]$.

To apply the a priori estimates to the maximizer,
we need a sequence of smooth solutions to Abreu's equation to
approximate the maximizer.
Since the penalty method in \cite{TW2} does not apply to $J_0$,
we must have a sequence of modified smooth approximation solutions.
For this purpose, we consider a functional of the form
\beq
J(u)=A(u)-\int_{\Omega}fu\, dx,
\eeq
where
\beq
A(u)=\int_\Omega G(\det D^2u)\,dx.
\eeq
Here $G(d)=G_\delta(d)$ is a smooth concave function on $[0, \infty)$
which depends on a constant $\delta\in (0, 1)$
and satisfies the following conditions.

(a) $G(d)=\log d$ when $d\geq \delta$.

(b) $G'(d)>0$ and there exist constants $C_1, C_2>0$
independent of $\delta$ such that for any $d>0$
\beqs
&&G''(d)\geq -C_1d^{-2}, \\ [5pt]
&&\left|\frac{dG'''(d)}{G''(d)}\right|\leq C_2.
\eeqs

(c) The function $F(t)=G(d)$, where $t=d^{\frac{1}{n}}$,
is  smooth in $(0, +\infty)$ and  satisfies
 $$\begin{array}{l}
 F(0)> -\infty, \  \    F''(t)<0,\\[8pt]
  {\lim}_{t\rightarrow 0} F'(t)=\infty, \  \  \lim_{t\rightarrow 0} tF'(t)\leq C_3,
 \end{array}$$
 where $C_3$ is a positive constant.

\begin{rem} \ \newline
(i)  The condition $F''(t)<0$ in (c)
implies that the functional $A(u)$ is concave.
\newline
(ii) The concavity of $F$, $F''(t)<0$,
 is equivalent to $dG''(d)+\frac{n-1}{n}G'(d)<0$;
and $ \lim_{t\rightarrow 0}F'(t)=\infty$ is equivalent to
$d^{\frac{n-1}{n}}G'(d)\rightarrow \infty$
 as $d\rightarrow 0$.
 \newline
(iii) We point out the existence of functions $G$
satisfying properties (a)-(c) above.
A function in our mind is
\beq
G(d)=
\begin{cases}
\frac{\delta^{-\theta}}{\theta(1-\theta)}d^{\theta}
-\frac{\theta\delta^{-1}}{1-\theta}d
+\log\delta-\frac{1+\theta}{\theta}, &\  d<\delta , \\
\log d, &\ d\geq\delta,
\end{cases}
\eeq
where $\theta=\frac{1}{n+2}$.
One can check that $G\in{C^{2,1}(0, \infty)}$ and
$C^3$ except at $d=\delta$.
It is easy to see that $G$ satisfies (a) and (c).
We can also check that $G$ satisfies (b) except at $d=\delta$.
Hence, we can always mollify $G$ to have
a sequence of smooth functions satisfying
the properties (a)-(c) to approximate it.
\end{rem}


\vskip 10pt

The Euler equation of the functional $J$ is
\beq
U^{ij}w_{ij}=f,
\eeq
where
\beq
w=G'(\det D^2u)
\eeq
and $(U^{ij})$ is the cofactor matrix of $D^2u$.

\begin{rem}
Equation (2.6) is invariant under unimodular linear transformation.
If we make a general non-degenerate linear transformation
$T: \ y=Tx$ and let  $\tilde u(y)=u(x)$,
then $\tilde u(y)$ is a solution of
$$\tilde U^{ij}\tilde w_{ij}=f, \
\tilde w=\tilde G'(\det D^2\tilde u),$$
where $\tilde G(\tilde d)=G(|T|^2 \tilde d)$, $\tilde d=\det D^2\tilde u$.
Here $\tilde G$ is a smooth concave function satisfying (a), (b), (c) with
$\tilde\delta=|T|^{-2}\delta$, $\tilde C_1=C_1$,
$\tilde C_2=C_2$, $\tilde C_3=C_3$.
\end{rem}

Now we study the existence and
uniqueness of maximizers to the functional $J(u)$.
The treatment here is same as that in \cite{TW2, ZZ},
so we will only sketch the proof.

First, we extend the functional $J$ to
$\overline S[\varphi, \Omega]$. It is clear that the linear part
in $J$ is naturally well-defined. It suffices to
extend $A(u)$ to $\overline S[\varphi, \Omega]$.
Since $u$ is convex, $u$ is almost everywhere twice-differentiable,
i.e., the Hessian matrix $(D^2u)$ exists almost everywhere.
Denote the Hessian matrix by $(\partial^2u)$ at those
twice-differentiable points in $\Omega$.
As a Radon measure, $\mu[u]$ can be decomposed into
a regular part and a singular part as follows,
$$\mu[u]=\mu_r[u]+\mu_s[u].$$
It was proved in \cite{TW2} that the regular part $\mu_r[u]$
can be given explicitly by
$$\mu_r[u]=\det \partial^2u\, dx$$
and $\det \partial^2u$ is a locally integrable function.
Therefore for any $u\in{\overline S[\varphi,\Omega]}$, we can define
\beq
A(u)=\int_\Omega G(\det \partial^2u)\, dx.
\eeq

Next, we state an important property of $A(u)$.
For any Lebesgue measurable set $E$,
by the concavity of $G$ and Jensen's inequality,
\beqn
\int_E G(\det \partial^2u)\,dx
&\leq& |E| G\left(\frac{\int_E\det \partial^2u\, dx}{|E|}\right)\\
&\leq& |E|G(|E|^{-1}\mu[u](E)).\nonumber
\eeqn
By the assumption (a), $d^{-1}G(d)\rightarrow 0$ as $d\rightarrow \infty$.
Note that $G$ is bounded from below.
So the above integral goes to $0$ as $|E|\rightarrow 0$.
With this property, we have an approximation result for the functional $A(u)$.
For $u\in{\overline S[\varphi, \Omega]}$,
let
$$u_h(x)=h^{-n}\int_{B_1(0)}\rho(\frac{x-y}{h})u(y)\,dy,$$
where $h>0$ is a small constant and
$\rho\in C^\infty_0 (B_1(0))$
with $\int_{B_1(0)}\rho=1$.
Suppose that $u$ is defined in a neighborhood of $\Omega$ such that
$u_h$ is well-defined for any $x\in{\Omega}$. A fundamental result is
that $(D^2u_h)\rightarrow (\partial^2u)$
almost everywhere in $\Omega$ \cite{Z}.
Combining it with (2.9), 
we have therefore obtained as in \cite{TW1},

\begin {lem}
Let $u\in {\overline S[\varphi, \Omega]}$, we have
$$\int_{\Omega}G(\det\partial^2u)\, dx
=\lim_{h\rightarrow 0}\int_{\Omega}G(\det\partial^2u_h)\, dx.$$
\end {lem}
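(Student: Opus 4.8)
The statement compares $A(u)$ evaluated on the convex function $u$ through its a.e.\ Hessian with the limit of $A(u_h)$ along the mollifications. Since $u_h\to u$ locally uniformly and each $u_h$ is smooth, we have $\mu[u_h]=\det D^2u_h\,dx$, and by the result cited above $\det D^2u_h\to\det\p^2u$ a.e.\ in $\Om$. The plan is to establish the two inequalities
\[
\liminf_{h\to0}\int_\Om G(\det D^2u_h)\,dx\ \ge\ \int_\Om G(\det\p^2u)\,dx\ \ge\ \limsup_{h\to0}\int_\Om G(\det D^2u_h)\,dx .
\]
The first is immediate: $G$ is bounded below, so Fatou's lemma applied to $G(\det D^2u_h)-\inf G\ge0$ together with the a.e.\ convergence gives it at once. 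The content is entirely in the second (upper semicontinuity) inequality.

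For that inequality, fix a grid of half--open cubes of side $\ell$; after translating the grid we may assume every face--hyperplane is $\mu[u]$--null, since in each coordinate direction only countably many parallel hyperplanes can carry positive mass. Let $Q_1,\dots,Q_m$ be the cubes with $\overline{Q_i}\subset\Om$ and $L=\Om\setminus\bigcup_iQ_i$ the remaining boundary layer, so $|L|\to0$ as $\ell\to0$. On each $Q_i$, concavity of $G$ and Jensen's inequality give, exactly as in (2.9),
\[
\int_{Q_i}G(\det D^2u_h)\,dx\ \le\ |Q_i|\,G\!\left(|Q_i|^{-1}\mu[u_h](Q_i)\right),
\]
and a similar bound holds on $L$. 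Letting $h\to0$: by the weak continuity of the Monge--Amp\`ere measure (2.1), $\limsup_h\mu[u_h](\overline{Q_i})\le\mu[u](\overline{Q_i})=\mu[u](Q_i)$, while $\mu[u_h](\Om)$ stays bounded because $N_{u_h}(\Om)$ lies in a fixed neighborhood of the bounded set $D\varphi(\overline\Om)$; since $G$ is increasing and continuous, summing and then using assumption (a) together with $G(0)>-\infty$ (the same computation that makes the integral in (2.9) vanish on small sets) yields
\[
\limsup_{h\to0}\int_\Om G(\det D^2u_h)\,dx\ \le\ \sum_{i=1}^m|Q_i|\,G\!\left(|Q_i|^{-1}\mu[u](Q_i)\right)+\eta(\ell),\qquad \eta(\ell)\to0 .
\]

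It remains to show the Jensen sum on the right decreases to $\int_\Om G(\det\p^2u)\,dx$ as $\ell\to0$. Write $\mu[u]=\det\p^2u\,dx+\mu_s[u]$ and use the subadditivity bound $G(a+b)\le G(a)+G(b)-G(0)$ valid for concave $G$. The term $\sum_i|Q_i|\,G(|Q_i|^{-1}\int_{Q_i}\det\p^2u)$ converges to $\int_\Om G(\det\p^2u)$: the cube averages of $\det\p^2u\in L^1(\Om)$ converge to it a.e.\ and in $L^1$, and the bound $G(d)\le C+\log^+d$, the $L^1$--equiintegrability of these averages, and the lower bound on $G$ give $L^1$--convergence of $G$ of the averages, hence convergence of the integrals. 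For the singular term, fix $\varepsilon>0$ and a compact $K$ with $|K|=0$ carrying all but $\varepsilon$ of the finite measure $\mu_s[u]$: cubes meeting $K$ have total volume tending to $0$ with $\ell$, while on every cube the singular mass not sitting on $K$ totals at most $\varepsilon$; applying $G(d)\le C+\log^+d$ and then concavity of $\log$ (Jensen with the weights $|Q_i|$) bounds $\sum_i|Q_i|\,G(|Q_i|^{-1}\mu_s[u](Q_i))$ by $|\Om|\,G(0)+o_\ell(1)+O(\varepsilon\log(1/\varepsilon))$, which cancels the correction $-\sum_i|Q_i|\,G(0)$ up to an error that vanishes on letting $\ell\to0$ and then $\varepsilon\to0$. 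Combining the three pieces completes the upper bound, and with the Fatou direction, the lemma.

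The main obstacle is exactly the control of the singular part $\mu_s[u]$ in the Jensen sums. Unlike the affine area functional, whose integrand $d^{\theta}$ vanishes at $d=0$ with a favorable H\"older rate, here one must use two special features of $G$ at once: $G(0)>-\infty$, so that cubes free of singular mass contribute precisely $|Q_i|\,G(0)$ and telescope against the subadditivity correction $-G(0)$; and the logarithmic growth of $G$ at infinity, which via estimates of the type $V\log(C/V)\to0$ annihilates the contribution of the (vanishingly thin) union of cubes on which the singular mass concentrates.
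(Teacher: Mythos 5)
Your proof is correct, but it takes a genuinely different and considerably longer route than what the paper intends. The paper sketches the argument by ``Combining it with (2.9), we have therefore obtained as in [TW1]'': the a.e.\ convergence $\det D^2u_h\to\det\partial^2u$ from [Z], together with the bound (2.9) applied to $u_h$ (using that $\mu[u_h](\Omega)$ is uniformly bounded, since $N_{u_h}(\Omega)$ stays inside a fixed neighbourhood of $D\varphi(\overline\Omega)$), gives uniform integrability of $G(\det D^2u_h)-G(0)\ge0$, and Vitali's convergence theorem yields the conclusion in one stroke. You instead prove the $\liminf$ direction by Fatou and the $\limsup$ direction from scratch by a grid decomposition, Jensen on each cube, weak convergence of the Monge--Amp\`ere measure, and a separate treatment of $\mu_s[u]$ --- in effect reproducing the proof of the \emph{next} lemma (the upper semicontinuity of $A$ under locally uniform convergence) for the special case of smooth approximants. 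That buys generality --- your $\limsup$ argument works for any locally uniformly convergent sequence, not only mollifications --- but it is overkill here, where the a.e.\ convergence of Hessians makes Vitali immediate. A small further remark on your singular-part estimate: the detour through $G(d)\le C+\log^+d$ and Jensen for $\log$ is unnecessary; applying Jensen directly with $G$ to the cubes meeting the compact carrier $K$, as in the derivation of (2.9), gives the bound $V\,G(M/V)\to0$ (with $V$ the total volume of those cubes and $M=\mu_s[u](\Omega)$) using only that $d^{-1}G(d)\to0$, which is exactly the property already invoked for (2.9).
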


Finally, the existence of maximizers of $J$
in $\overline S[\varphi, \Omega]$ follows from
the following upper semi-continuity of the functional
$A(u)$ with respect to uniform convergence.

\begin {lem}
Suppose that ${u_n}\in{\overline S[\varphi, \Omega]}$
converge locally uniformly to $u$. Then
$$\limsup_{n\rightarrow\infty}\int_{\Omega}G(\det\partial^2u_n)\, dx
\leq\int_{\Omega}G(\det\partial^2u) \, dx.$$
\end {lem}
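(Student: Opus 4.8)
The plan is to carry out the standard lower-semicontinuity argument for affine-type area functionals, as in \cite{TW1, TW2, ZZ}: one combines Jensen's inequality (2.9) with the weak continuity (2.1) of the Monge--Amp\`ere measure along a refining family of dyadic partitions. We may assume $A(u)=\int_\Omega G(\det\partial^2 u)\,dx<+\infty$. First I would record two facts about $G$: by (c) it is increasing with $G\ge G(0)=F(0)=:-C_0>-\infty$; and by (a) it satisfies $G(d)\le\log_+ d+C_\delta$ for a constant $C_\delta$ depending only on $\delta$. Feeding the second fact into (2.9) gives, for every $v\in\overline S[\varphi,\Omega]$ (so that $\mu[v](\Omega)\le M:=|D\varphi(\overline\Omega)|$) and every measurable $E\subset\Omega$,
\[
\int_E G(\det\partial^2 v)\,dx\ \le\ |E|\,G\!\left(|E|^{-1}\mu[v](E)\right)\ \le\ |E|\,G\!\left(|E|^{-1}M\right),
\]
and the right-hand side equals $|E|\log M+|E|\log\tfrac1{|E|}=:\omega(|E|)$ once $|E|^{-1}M\ge\delta$; in particular $\omega(t)\to0$ as $t\to0^+$, uniformly in $v$. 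Using this together with $G(\det\partial^2 v)\ge-C_0$, a routine exhaustion reduces the lemma to the local statement: for every compact $K\subset\Omega$ which is a finite union of closed dyadic cubes,
\[
\limsup_{n\to\infty}\int_K G(\det\partial^2 u_n)\,dx\ \le\ \int_K G(\det\partial^2 u)\,dx.
\]
(Indeed, writing $\int_\Omega=\int_K+\int_{\Omega\setminus K}$, using $\int_{\Omega\setminus K}G(\det\partial^2 u_n)\le\omega(|\Omega\setminus K|)$ and $\int_K G(\det\partial^2 u)\le A(u)+C_0|\Omega\setminus K|$ gives $\limsup_n A(u_n)\le A(u)+C_0|\Omega\setminus K|+\omega(|\Omega\setminus K|)$, and one lets $K\uparrow\Omega$.)

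To prove the local statement, fix $K$ and, for each generation $m$, write $K$ as an essentially disjoint union of closed dyadic cubes $Q_1,\dots,Q_N$ of that generation; note $G(\det\partial^2 u_n)\in L^1(K)$ by (2.9). Applying (2.9) on each $Q_i$ with $v=u_n$ and summing,
\[
\int_K G(\det\partial^2 u_n)\,dx=\sum_{i=1}^N\int_{Q_i}G(\det\partial^2 u_n)\,dx\ \le\ \sum_{i=1}^N|Q_i|\,G\!\left(|Q_i|^{-1}\mu[u_n](Q_i)\right).
\]
Since $u_n\to u$ locally uniformly, hence in $L^1_{loc}$, and each $Q_i$ is closed, (2.1) gives $\limsup_n\mu[u_n](Q_i)\le\mu[u](Q_i)$; as $G$ is continuous and nondecreasing and the sum is finite, taking $\limsup_n$ yields
\[
\limsup_{n\to\infty}\int_K G(\det\partial^2 u_n)\,dx\ \le\ \sum_{i=1}^N|Q_i|\,G\!\left(|Q_i|^{-1}\mu[u](Q_i)\right)=:\int_K h_m\,dx,
\]
where $h_m$ is the step function equal to $G\big(|Q_i|^{-1}\mu[u](Q_i)\big)$ on $Q_i$.

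Next I would let $m\to\infty$. Using the decomposition $\mu[u]=\mu_r[u]+\mu_s[u]=\det\partial^2 u\,dx+\mu_s[u]$ and the martingale (Lebesgue differentiation) theorem along the dyadic net, for a.e.\ $x\in K$ the dyadic average $|Q^{(m)}(x)|^{-1}\mu[u](Q^{(m)}(x))$ tends to $\det\partial^2 u(x)$ — the singular part $\mu_s[u]$ washing out a.e.\ — so $h_m(x)\to G(\det\partial^2 u(x))$ a.e.\ by continuity of $G$ on $[0,\infty)$. Moreover each such average is at most the dyadic maximal function $M^d\mu[u](x)$, which satisfies $|\{M^d\mu[u]>\lambda\}\cap K|\le M/\lambda$, so $h_m\le\log_+\!\big(M^d\mu[u]\big)+C_\delta=:\Phi$ and
\[
\int_K\Phi\,dx\ \le\ C_\delta|K|+\int_1^\infty\big|\{M^d\mu[u]>\lambda\}\cap K\big|\,\frac{d\lambda}{\lambda}\ \le\ C_\delta|K|+\int_1^\infty\min\!\Big(|K|,\tfrac M\lambda\Big)\,\frac{d\lambda}{\lambda}\ <\ \infty,
\]
i.e.\ $\Phi\in L^1(K)$. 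Since $h_m\to G(\det\partial^2 u)$ a.e.\ and $\Phi-h_m\ge0$, Fatou's lemma applied to $\Phi-h_m$ gives $\limsup_m\int_K h_m\,dx\le\int_K G(\det\partial^2 u)\,dx$; combined with the previous display, valid for every $m$, this proves the local statement, hence the lemma.

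The main obstacle is this last passage $m\to\infty$. The pointwise convergence $h_m\to G(\det\partial^2 u)$ rests on the Radon--Nikod\'ym decomposition of $\mu[u]$ into $\det\partial^2 u\,dx$ plus a purely singular measure, and Fatou's lemma by itself gives only $\int_K G(\det\partial^2 u)\le\liminf_m\int_K h_m\,dx$ — the wrong inequality — so an $L^1(K)$ majorant for $\{h_m\}$ is indispensable. That majorant exists only because $G$ grows like $\log$: the dyadic maximal function $M^d\mu[u]$ is merely weak-$L^1$, yet its logarithm is locally integrable. This is the $\log\det$ counterpart of a step that, for the affine area functional $\int(\det D^2u)^\theta$, is handled somewhat differently in \cite{TW1, TW2}, and it is exactly where the present choice of integrand must be exploited with care.
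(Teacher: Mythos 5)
Your proof is correct, and it follows a genuinely different route from the paper's. The paper first reduces to $C^2(\overline\Omega)$ approximants via mollification (Lemma 2.3), then partitions $\Omega\setminus S$ (with $S$ supporting the singular part of $\mu[u]$) into level sets $\Omega_k=\{(k-1)\epsilon\le\det\partial^2u<k\epsilon\}$, picks closed $\omega_k\subset\Omega_k$ filling them up to measure $\epsilon'2^{-k}$, applies Jensen and the upper semi-continuity (2.10) on each $\omega_k$, and then sums using the subadditivity $G(k\epsilon)\le G((k-1)\epsilon)+G(\epsilon)$ (from concavity and the normalization $G(0)=0$) to recover $\int_\Omega G(\det\partial^2u)$ up to an error $G(\epsilon)|\Omega|\to 0$. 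You instead skip mollification entirely, tile a compact $K\subset\subset\Omega$ by closed dyadic cubes of generation $m$, apply Jensen (2.9) and the upper semi-continuity (2.1) cube by cube, and then let $m\to\infty$ via dyadic differentiation of $\mu[u]$ (the singular part has density $0$ a.e.) together with a reverse-Fatou argument, the integrable majorant being $\log_+\!\big(M^d\mu[u]\big)+C_\delta$ furnished by the weak-$(1,1)$ bound on the dyadic maximal function. Both proofs hinge on the same two facts — the Jensen estimate (2.9) and the weak upper semi-continuity of $\mu$ — but yours trades the level-set partition and $G$'s subadditivity for spatial dyadic averaging plus a maximal-function estimate, and it avoids the reduction to smooth $u_n$ (Lemma 2.3) altogether, which is a pleasant simplification. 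One minor quibble: your closing remark presents the maximal-function majorant as peculiar to the logarithm, but $(M^d\mu)^\theta$ with $\theta\in(0,1)$ is also locally integrable by the same weak-$(1,1)$ bound, so your mechanism would serve equally well for the affine area functional $\int(\det D^2u)^\theta$; the paper's route and yours are simply two valid packagings of the same underlying ingredients, not a log-specific necessity.
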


\begin {proof}
The proof is also inspired by \cite{Lu, TW1}, see also \cite{ZZ}.
Subtracting $G$ by the constant $G(0)$,
we may suppose that $G(0)=0$.
By Lemma 2.3, it suffices to prove it for
$u_n \in C^2(\overline\Omega)$ and we may assume that
$u_n$ converges uniformly to $u$ in $\overline \Omega$.

Denote by $S$ the supporting set of $\mu_s[u]$,
whose Lebesgue measure is zero.
By the upper semi-continuity of the Monge-Amp\`ere measure,
for any closed subset $F\subset\Omega\setminus S$,
\beq
\limsup_{n\rightarrow \infty}\int_F\det D^2u_n\, dx
\leq \int_F \det\partial^2u\, dx.
\eeq
For given $\epsilon,\epsilon'>0$, let
$$\Omega_k=\{x\in \Omega\setminus S\ |\
(k-1)\epsilon\leq \det \partial^2u<k\epsilon\},
\  k=0, 1, 2, ..., $$
and $\omega_k\subset \Omega_k$ be a closed set such that
$$|\Omega_k\backslash \omega_k|<\frac{\epsilon'}{2^{|k|}}.$$
For each $\omega_k$, by concavity of $G$ and
(2.10), we have
\beqs
\limsup_{n\rightarrow\infty}\frac{1}{|\omega_k|}
\int_{\omega_k} G(\det D^2u_n)\,dx
&\leq& \limsup_{n\rightarrow \infty}
G\left(\frac{\int_{\omega_k}\det D^2u_n\, dx}{|\omega_k|}\right)\\
&\leq& G\left(\frac{\int_{\omega_k}\det\partial^2u\, dx}{|\omega_k|}\right)\\
&\leq& G(k\epsilon).
\eeqs
It follows
\beqs
\limsup_{n\rightarrow \infty}\int_{\omega_k} G(\det D^2u_n)\, dx
&\leq& G(k\epsilon)|\omega_k|\\
&\leq& G((k-1)\epsilon)|\omega_k|+G(\epsilon)|\omega_k|\\
&\leq& \int_{\Omega_k}G(\det\partial^2u)\, dx+G(\epsilon)|\Omega_k|.
\eeqs
Hence,
$$\limsup_{n\rightarrow\infty}\int_{\bigcup\omega_k}G(\det D^2u_n)\, dx
\leq \int_{\Omega} G(\det \partial^2u)\,dx+G(\epsilon)|\Omega|. $$
By (2.9), letting $\epsilon$ go to $0$,
we can replace the domain of the left hand side integral by $\Omega$.
The lemma is proved.
\end {proof}

For the uniqueness of maximizers, we first prove a lemma.

\begin {lem}
For any maximizer $u$ of $J(\cdot)$, the
Monge-Amp\`ere measure $\mu[u]$ has no singular part.
\end {lem}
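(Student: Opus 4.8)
The plan is to argue by contradiction. Suppose a maximizer $u$ has $\mu[u]=\mu_r[u]+\mu_s[u]$ with $\mu_s[u]\neq0$; the goal is to produce a competitor $\hat u\in\overline S[\varphi,\Omega]$ with $J(\hat u)>J(u)$. The guiding principle is that a singular part of $\mu[u]$ is wasteful: it uses up part of the gradient budget $N_u(\Omega)\subseteq D\varphi(\overline\Omega)$ but contributes nothing to $A(u)=\int_\Omega G(\det\partial^2u)\,dx$, so redistributing it as an absolutely continuous Monge--Amp\`ere mass, spread where $\det\partial^2u$ is small, can only raise $A$ --- strictly, since $G'>0$, and by a controllable amount, because condition (c) forces $G'(d)\to\infty$ (equivalently $d^{(n-1)/n}G'(d)\to\infty$) as $d\to0$.

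The modification is local, in the spirit of the affine maximal surface case \cite{TW1, TW2}. By differentiation of measures one can fix a point $x_0$ with $\mu_s[u](B_r(x_0))/|B_r(x_0)|\to\infty$ and $\mu_r[u](B_r(x_0))/\mu_s[u](B_r(x_0))\to0$ as $r\to0$; fix a small $r>0$ with $B:=B_r(x_0)\Subset\Omega$ and put $B':=B_{r/2}(x_0)$. One would replace $u$ on $B$ by a convex function $w$ with $w=u$ on $\partial B$, $w\ge u$ in $B$, $\det\partial^2w=\det\partial^2u$ on $B\setminus B'$, and $\det\partial^2w\equiv|B'|^{-1}\mu[u](B')$ on $B'$ --- i.e.\ the concentrated part of $\mu[u]$ inside $B$ is ``rounded up'' and spread over $B'$, where it produces a density $|B'|^{-1}\mu[u](B')\to\infty$. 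Since $w\ge u$ with equality on $\partial B$, the glued function $\hat u$ ($:=w$ on $B$, $:=u$ elsewhere) is convex, equals $\varphi$ on $\partial\Omega$, and satisfies $N_{\hat u}(\Omega)\subseteq N_u(\Omega)\subseteq D\varphi(\overline\Omega)$: indeed, if $p\in N_{\hat u}(z)$ then the affine function $L(y):=\hat u(z)+p\cdot(y-z)$ lies below $\hat u$ on $\Omega$, hence below $u$ on $\Omega\setminus B$, so $u-L$ attains its minimum over $\overline\Omega$ at an interior point, giving $p\in N_u(\Omega)$. Thus $\hat u\in\overline S[\varphi,\Omega]$.

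Then one compares the two effects. Being convex, $u$ is locally Lipschitz near $x_0$, so $0\le\hat u-u\le\operatorname{osc}_{\overline B}u\le Cr$, and therefore
$$\Bigl|\int_\Omega f(\hat u-u)\,dx\Bigr|\ \le\ \|f\|_{L^\infty}\int_B|\hat u-u|\,dx\ \le\ C'r^{n+1}.$$
For the gain, write $a:=|B'|^{-1}\mu_r[u](B')$ and $b:=|B'|^{-1}\mu_s[u](B')$. By Jensen's inequality applied to $\det\partial^2u$ on $B'$ (exactly as in (2.9)),
$$A(\hat u)-A(u)=\int_{B'}\bigl[G(a+b)-G(\det\partial^2u)\bigr]\,dx\ \ge\ |B'|\,\bigl[G(a+b)-G(a)\bigr]\ \ge\ \mu_s[u](B')\,G'(a+b),$$
and since $a+b=|B'|^{-1}\mu[u](B')\to\infty$ one has $G'(a+b)=(a+b)^{-1}$ for $r$ small, so the last expression equals $|B'|\,\mu_s[u](B')/\mu[u](B')\ge\tfrac12|B'|\ge c\,r^{n}$ for $r$ small, using the choice of $x_0$. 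Hence $J(\hat u)-J(u)\ge c\,r^{n}-C'r^{n+1}>0$ for $r$ small, contradicting the maximality of $u$.

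The step I expect to be the main obstacle is the construction of $w$. The comparison principle does not apply directly --- spreading concentrated mass into absolutely continuous mass forces $\mu[w]\not\le\mu[u]$ as measures --- so the inequality $w\ge u$ has to be obtained by an explicit geometric construction, essentially by pushing the graph of $u|_{\overline B}$ up to a suitable uniformly convex cap that still matches $u$ on $\partial B$, as carried out for the affine maximal surface equation in \cite{TW1, TW2}. The remaining ingredients --- the weak continuity (2.1) of the Monge--Amp\`ere measure, local Lipschitz estimates for convex functions, and the behaviour of $G$ near $d=0$ coming from condition (c) --- are routine.
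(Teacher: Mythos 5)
Your proposal is essentially the paper's own argument, with a different choice of replacement measure and a more explicitly quantitative bookkeeping. The paper picks $B_r$ with $\mu_s[u](B_r)\geq M(\mu_r[u](B_r)+|B_r|)$, solves $\mu[v]=M\mu_r[u]+M\,dx$ in $B_r$ with $v=u$ on $\partial B_r$, glues $\tilde u=\max(u,v)$ with $u$, and lets $M\to\infty$; you instead spread the total mass $|B'|^{-1}\mu[u](B')$ as a flat density over an inner ball, keep the regular density on the annulus, glue, and let $r\to 0$, obtaining $J(\hat u)-J(u)\gtrsim r^n-Cr^{n+1}$. Both gains rest on the same features of $G$: condition~(c) and $G'(d)=d^{-1}$ for $d\geq\delta$. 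The admissibility argument ($N_{\hat u}(\Omega)\subseteq N_u(\Omega)$) and the Jensen/mean-value chain are correct as written, assuming $w\geq u$.

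The one genuine gap is exactly the one you flag: establishing $w\geq u$ on $B$. Your diagnosis is correct — $\mu[w]\not\leq\mu[u]$ as measures (the prescribed a.c.\ density on $B'$ is not dominated by the restriction of $\mu[u]$, whose singular part lives on a null set), so the Aleksandrov comparison principle does not apply directly. But be aware this is not a shortcoming specific to your construction: the paper's proof hits the identical point. It asserts ``by comparison principle $u\leq v$'' for a $v$ with $\mu[v]=M\mu_r[u]+M\,dx$, which is likewise not dominated by $\mu[u]$ pointwise, and then takes $\tilde u=\max(u,v)$ extended by $u$ (a gluing whose convexity is immediate once $v\geq u$ in $B_r$ with $v=u$ on $\partial B_r$). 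So you have correctly located the crux; completing it would require the same kind of refined comparison or explicit geometric replacement you point to in \cite{TW1,TW2}, and any such completion would close the paper's step as well.
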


\begin{proof}
We use an argument from \cite{TW2} to prove the lemma.
 Suppose $\mu[u]$ has non-vanishing singular part
 $\mu_s[u]$. Then  for any $M>0$, there must exist a ball
$B_r\subset \Omega$ such that
 \beq
 \mu_s[u](B_r)\geq M(\mu_r[u](B_r)+|B_r|).
\eeq
 We consider  the following
Dirichlet problem for Monge-Amp\`ere operator,
$$ \begin{cases}
&\mu[v]=M\mu_r[u]+ M \ \text{in} \  B_r,\\
&v=u \  \text{on} \  \partial B_r.
\end{cases}$$
By the Alexander theorem, the above equation has  a unique
convex solution $v$. Note
\beq
\det\partial^2v=M\det\partial^2 u+ M,\ \text{in}\  B_r.
\eeq
 By comparison principle, $u\leq v$ in $B_r$, and  the set
$E=\{v>u\}$ is not empty. Define another convex  function
$\tilde u$ by
 $$\begin {cases}
&\tilde{u}= u \ \text{in}\  \Omega\setminus E,\\
&\tilde{u}=v \ \text{in}\  E.
\end {cases}$$
Then $\tilde u \in\overline S[\varphi, \Omega]$.
 We claim $J(\tilde{u})<J(u)$, so we get a contradiction to
the assumption that $u$ is a maximizer.
In fact, using (2.12),  we have
 \beqs
J(\tilde{u})-J(u)
&=&\int_E G(\det\partial^2v)\,dx-\int_E G(\det\partial^2u)\,dx
-\int_E f(v-u)\,dx\\
&=& \int_E G(\det \partial^2 u)-G(M(1+\det \partial^2 u))\,dx
-\int_E f(v-u)\,dx.
\eeqs
By the definition of $G$, the first integral goes to $-\infty$
as $M$ goes to $\infty$. The second integral is bounded
since $f$ is bounded.  The lemma is proved.
\end{proof}

In conclusion, we  have obtained the existence and uniqueness
of maximizers of $J$ in $\overline{S}[\varphi, \Omega]$.

\begin {theo}
Let $\Omega$ be a bounded, Lipschitz domain in $\mathbb R^n$.
Suppose $\varphi$ is a convex Lipschitz function defined
in a neighborhood of $\overline \Omega$ and $f\in{L^\infty}(\Omega)$.
There exists a unique function in
$\overline S[\varphi, \Omega]$ maximizing $J$.
\end {theo}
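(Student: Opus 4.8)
The plan is to combine the two-part structure already visible in Section~2: an existence half using the direct method of the calculus of variations, and a uniqueness half using strict concavity of $A$ together with Lemma~2.5. For existence, I would first check that $J$ is bounded above on $\overline S[\varphi,\Omega]$. The linear term $\int_\Omega fv\,dx$ is controlled because every $v\in\overline S[\varphi,\Omega]$ agrees with $\varphi$ on $\partial\Omega$ and has normal image contained in $D\varphi(\overline\Omega)$, so by convexity $v$ is trapped between $\varphi$ and a fixed cone-type lower barrier; hence $\|v\|_{L^\infty(\Omega)}$ is bounded uniformly, and $f\in L^\infty$ finishes that term. For the $A$-term, the Jensen estimate~(2.9) applied with $E=\Omega$ gives $A(v)\le|\Omega|\,G\big(|\Omega|^{-1}\mu[v](\Omega)\big)\le|\Omega|\,G\big(|\Omega|^{-1}|D\varphi(\overline\Omega)|\big)$ since $G$ is increasing and $\mu[v](\Omega)=|N_v(\Omega)|\le|D\varphi(\overline\Omega)|$. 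So $\sup_{\overline S}J=:m<\infty$.

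Next take a maximizing sequence $u_n\in\overline S[\varphi,\Omega]$. The uniform $L^\infty$ bound plus convexity and the fixed boundary data give, by a standard compactness argument for convex functions, a subsequence converging locally uniformly (in fact uniformly on $\overline\Omega$, by equicontinuity up to the boundary coming from the barriers) to some $u$. Because $\overline S[\varphi,\Omega]$ is closed under locally uniform convergence of convex functions (noted after~(2.2)), $u\in\overline S[\varphi,\Omega]$. Now pass to the limit: the linear term is continuous under uniform convergence, and Lemma~2.4 gives $\limsup_n A(u_n)\le A(u)$. Therefore $J(u)\ge\limsup_n J(u_n)=m$, so $u$ is a maximizer.

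For uniqueness, suppose $u_0,u_1$ are both maximizers and set $u_t=(1-t)u_0+tu_1\in\overline S[\varphi,\Omega]$ (the set is convex: convexity of functions, affine boundary constraint, and $N_{u_t}\subset\overline{\mathrm{conv}}\,(N_{u_0}\cup N_{u_1})\subset D\varphi(\overline\Omega)$ since the target is convex). The linear term is affine in $t$, so $J$ is concave along this segment because $A$ is concave (Remark~2.2(i), i.e.\ $F''<0$). Maximality of both endpoints forces $J(u_t)\equiv m$, hence $t\mapsto A(u_t)$ is affine. By Lemma~2.5, $\mu[u_0]$ and $\mu[u_1]$ are purely regular, so $\det\partial^2u_t$ is a well-defined $L^1_{loc}$ function for each $t$, and along the segment $t\mapsto\int_\Omega G(\det\partial^2 u_t)\,dx$ being affine while each integrand is concave in $t$ (strictly, wherever $\det\partial^2u_0\neq\det\partial^2u_1$ or the Hessians differ, using strict concavity of $G$ encoded in $F''<0$) forces $\partial^2u_0=\partial^2u_1$ a.e.; combined with $u_0=u_1=\varphi$ on $\partial\Omega$ this yields $u_0\equiv u_1$.

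The main obstacle is the uniqueness step: concavity of $A$ only gives that $A$ is affine along the segment, and one must upgrade this to pointwise equality of the Hessians. The subtlety is that $G$ is concave but not strictly concave in $d$ on all of $[0,\infty)$ in an obvious quantitative way, so one cannot immediately conclude from ``$\int G(\det\partial^2u_t)\,dx$ affine'' that $\det\partial^2u_0=\det\partial^2u_1$ a.e.; the correct move, exactly as in~\cite{TW2}, is to pass to the variable $t=d^{1/n}$ where $F''<0$ is genuine strict concavity, combine with the fact (from Lemma~2.5) that both competitors have absolutely continuous Monge--Amp\`ere measures so the regular Hessian is the honest density, and then use the strict convexity of $d\mapsto (\det)^{1/n}$ on positive-definite matrices along affine paths of Hessians to conclude $D^2u_0=D^2u_1$ a.e. That, plus equal boundary values, gives $u_0=u_1$.
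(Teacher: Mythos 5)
Your proof follows the same overall structure as the paper's (direct method for existence via the upper semicontinuity of $A$, concavity plus Lemma~2.5 for uniqueness), with more scaffolding spelled out on the existence side. The one place you gloss over a genuinely needed ingredient is the final step of uniqueness: after obtaining $\partial^2 u_0 = \partial^2 u_1$ a.e., you write ``that, plus equal boundary values, gives $u_0=u_1$,'' but for merely continuous convex functions knowing the a.e.\ (absolutely continuous) Hessian together with boundary data does \emph{not} by itself determine the function. The correct bridge, which the paper states explicitly, is: Lemma~2.5 gives that $\mu[u_0]$ and $\mu[u_1]$ are purely absolutely continuous, so $\partial^2 u_0 = \partial^2 u_1$ a.e.\ implies $\mu[u_0]=\mu[u_1]$ as Monge--Amp\`ere measures; then one invokes the Aleksandrov uniqueness theorem for generalized solutions of the Dirichlet problem for the Monge--Amp\`ere equation (same measure, same boundary values $\Rightarrow$ same function). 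You have all the ingredients in hand -- you correctly use Lemma~2.5 to rule out singular parts -- so the fix is simply to close the argument by citing this uniqueness theorem rather than treating ``equal Hessians a.e.\ plus boundary data'' as automatically sufficient. Everything else matches the paper's reasoning.
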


\begin {proof}
The existence follows from the upper semi-continuity of $A(u)$.
For the uniqueness, note that by the concavity of the
functional, if there exist two maximizers $u$ and $v$, then
$\partial^2u=\partial^2 v$ almost everywhere. Hence by Lemma 2.5 we have
$\mu[u]=\mu[v]$. By the uniqueness of generalized solutions to the
Dirichlet problem of the Monge-Amp\`ere equation, we conclude that $u=v$.
\end {proof}

\vskip 10pt

In Theorem 2.6, we only need the Lipschitz condition
on $\Omega$ and $\varphi$.
But later for the regularity,
we must assume the smoothness as stated in Theorem 1.1.
We point out again that the above argument
applies to the functional $J_0$,
and the existence and uniqueness of maximizers also hold for $J_0$.
But we will not study the maximizer of $J_0$ obtained in this way.

\vskip10pt

For our purpose of studying $J_0$,
we choose a sequence of functions
$G_k=G_{\delta_k}$ satisfying (a)-(c) with
$\delta_k \rightarrow 0$ as $k\rightarrow \infty$,
and consider the functionals
\beq
J_k(u)=A_k(u)-\int_{\Omega}fu\, dx,
\eeq
where
\beq
A_k(u)=\int_\Omega G_k(\det D^2u)\, dx.
\eeq
By Theorem 2.6, there exists $u^{(k)}\in{\overline S[\varphi, \Omega]}$
maximizing the functional $J_k$ in $\overline S[\varphi, \Omega]$.
It is clear that $u^{(k)}$ converges to a convex function $u_0$
in $\overline S[\varphi, \Omega]$.
We will prove that in dimension $2$, $u_0$ solves the problem (1.7).
The main point is to prove the smoothness of $u_0$.
Once we have the regularity of $u_0$, the uniqueness follows
immediately by the concavity of $A_0$ and
the uniqueness of generalized solutions to the
Dirichlet problem of the Monge-Amp\`ere equation.
Hence, In the rest of this paper,
we prove that $u_0$ is smooth in $\Omega$
and satisfies Abreu's equation.

\vskip 10pt

\section{Interior estimates}

\vskip 10pt

In this section, we establish the interior estimates for  equation (2.6).

\begin{lem}
Let $u$ be a convex smooth solution to
(2.6)  in a convex domain $\Omega$. Assume that $u<0$ in $\Omega$ and
$u=0$ on $\p \Omega$. Then there is a positive constant $C$
depending only on $n$, $\sup|\nabla u|$,
$\sup|u|$, $\sup|f|$ and independent of
$\delta$,  such that
$$(-u)^{n}\det D^2u \leq C.$$
\end{lem}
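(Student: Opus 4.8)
The plan is to prove the estimate $(-u)^n\det D^2 u\le C$ by a standard interior maximum-principle argument adapted to the fourth-order structure of Abreu's equation, following the pattern used for the affine maximal surface equation in \cite{TW1, TW2}. Consider the function
\beq
P(x)=(-u(x))^n\,w(x)^{-1}=(-u(x))^n\,\det D^2u(x),
\eeq
where $w=G'(\det D^2u)$; since $u<0$ in $\Omega$ and $u=0$ on $\pom$, the function $P$ is nonnegative, vanishes on $\pom$, and attains its maximum at some interior point $x_0\in\Omega$. At $x_0$ we may rotate the $x$-coordinates so that $D^2u(x_0)$ is diagonal; we then examine the first- and second-order conditions for $\log P=n\log(-u)-\log w$ at $x_0$, namely $\nabla\log P=0$ and $U^{ij}(\log P)_{ij}\le 0$ (using that $(U^{ij})>0$). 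The first identity gives $w_i/w=-n\,u_i/(-u)$, which lets us eliminate first derivatives of $w$; the second, after substituting the equation $U^{ij}w_{ij}=f$ to handle the leading term $-U^{ij}w_{ij}/w=-f/w$, produces an inequality in which the dominant good term is $n\,U^{ij}u_{ij}/(-u)=n\cdot n\cdot[\text{something}]$ — more precisely $U^{ij}u_{ij}=n\det D^2u$ — while the bad terms involve $U^{ij}u_iu_j/u^2$, $U^{ij}w_iw_j/w^2$, and $f/w$.

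The key algebraic point will be to control the third-derivative-type cross terms. From $w=G'(\det D^2u)$ one has $w_i=G''(\det D^2u)\,(\det D^2u)_i$ and $w_{ij}=G''(\det D^2u)(\det D^2u)_{ij}+G'''(\det D^2u)(\det D^2u)_i(\det D^2u)_j$; the hypotheses (b) on $G$, especially $G''(d)\ge -C_1 d^{-2}$ and $|dG'''(d)/G''(d)|\le C_2$, are precisely what is needed so that these terms, when weighted by $U^{ij}$ and divided by $w$, are either absorbable or estimated by the borderline quantity $\det D^2u/(-u)^2$ times a controlled factor, uniformly in $\delta$. I would use $U^{ij}(\det D^2u)_{ij}$ expanded in terms of $u_{ijk}$ (Schur-type positivity of $U^{ij,kl}$) together with the first-order condition to recycle these into the good term. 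After collecting everything, the inequality at $x_0$ takes the schematic form
\beq
\frac{n^2\det D^2u}{-u}\le C\Big(\frac{U^{ij}u_iu_j}{u^2}+\frac{|f|}{w}+ \text{(lower order)}\Big),
\eeq
and since $U^{ij}u_iu_j\le (\sup|\nabla u|)^2\,\mathrm{tr}\,U=(\sup|\nabla u|)^2(n-1)!\,\sigma_{n-1}(\lambda)$ — which is not directly bounded by $\det D^2u$ — one must instead pair $U^{ij}u_iu_j/u^2$ against part of the good term using Cauchy--Schwarz on eigenvalues, or bound it crudely and close the argument by the scaling homogeneity: multiplying through by $(-u)^n$ converts $\det D^2u/(-u)$ bounds into the asserted bound on $(-u)^n\det D^2u$ after using $0<-u\le\sup|u|$ and $\sup|\nabla u|<\infty$.

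The main obstacle I anticipate is exactly this last balancing: the term $U^{ij}u_iu_j/u^2$ is genuinely of the same order as the good term $n^2\det D^2u/(-u)$ is not — it is potentially larger — so a naive maximum principle on $P=(-u)^nw^{-1}$ may fail, and one typically needs to test instead a modified quantity such as $P=(-u)^{n}w^{-1}\exp(\beta|\nabla u|^2)$ or $(-u+\mu)^n w^{-1}$ for suitable constants, or to exploit an auxiliary barrier, so that the troublesome gradient term is dominated. Getting the constant to be independent of $\delta$ forces one to use only the $\delta$-independent bounds $C_1,C_2$ (and, where $tF'(t)$ enters via $w$, the constant $C_3$), never the pointwise size of $G$, $G'$, or $\delta$ itself. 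Once the good term dominates and the constant is seen to depend only on $n,\sup|\nabla u|,\sup|u|,\sup|f|$, the estimate $(-u)^n\det D^2u\le C$ at $x_0$, hence everywhere, follows.
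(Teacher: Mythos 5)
Your strategy points in the right direction (maximum principle applied to a test quantity built from $(-u)$, $\det D^2 u$, and a gradient correction), and you correctly sense that the bare $P=(-u)^n w^{-1}$ won't close and that a factor like $\exp(\beta|\nabla u|^2)$ is needed. But the proposal has three genuine problems.

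First, your opening identity $P=(-u)^n w^{-1}=(-u)^n\det D^2u$ is false for general $G$: since $w=G'(d)$, the equality $w^{-1}=d$ holds only when $G'(d)=1/d$, i.e.\ when $G(d)=\log d$, which by condition (a) is only guaranteed for $d\ge\delta$. The quantity you actually want to bound is $(-u)^n d$, not $(-u)^n w^{-1}$, and these differ in the region $d<\delta$.

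Second, you never exploit the crucial reduction that makes this lemma almost trivial to set up. The paper works with $z=-\log d-n\log(-u)-|\nabla u|^2$ (minimized, not maximized), which blows up on $\pom$, hence attains its minimum at an interior point $p$. At that point one simply checks: if $d(p)\le\delta$, then $e^{-z}\le e^{-z(p)}\le\delta(\sup|u|)^n e^{\sup|\nabla u|^2}$ and we are done; if $d(p)>\delta$, then by condition (a) we have $G(d)=\log d$ near $p$, so $w=d^{-1}$, and the computation proceeds entirely with the clean relation $w=d^{-1}$. In particular condition (b) on $G''$ and $G'''$, which you invoke as ``precisely what is needed,'' is \emph{not} used in Lemma~3.1 at all; those bounds enter only in Lemma~3.2, where the Legendre transform produces $w^*=G(d^{*-1})-d^{*-1}G'(d^{*-1})$ and one genuinely needs to differentiate $G$ at general arguments.

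Third, you stop short of the actual computation and leave the closing of the gradient term as a conjecture. Once $w=d^{-1}$ and the gradient correction is in the test function, the first-order condition $z_i=0$ at $p$ gives $w_i/w=\beta u_i/u+2u_{ki}u_k$, and substituting into $u^{ij}z_{ij}\ge0$ (using $u^{ab}u_{abi}=-w_i/w$ and $u^{ij}w_{ij}/w=f$) yields
$$0\le u^{ij}z_{ij}=f-\frac{\beta n}{u}-2\Delta u-\frac{2\beta|Du|^2}{u}-(\beta^2-\beta)\,u^{ij}\frac{u_iu_j}{u^2}.$$
Choosing $\beta=n\ge1$ makes the last term nonpositive and one reads off $(-u)\Delta u\le C$ at $p$, hence $(-u)[\det D^2u]^{1/n}\le C$ at $p$, and then globally because $p$ minimizes $z$. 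Your anticipated ``main obstacle'' — that $u^{ij}u_iu_j/u^2$ might not be absorbable — is dissolved not by Cauchy--Schwarz against the good term but simply by the choice $\beta=n$, which makes its coefficient $-(\beta^2-\beta)$ the right sign to discard it. To make the argument complete you need to actually carry out this computation, rather than leaving it as a sketch with the wrong auxiliary function and the wrong set of hypotheses on $G$.
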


\begin{proof}
Let
$$z=-\log d- \log{(-u)^\beta}-|\nabla u|^2,$$
where $\beta$ is a positive number to be determined later.
Then $z$ attains its minimum at a point $p$ in $\Omega$.
We may assume that $d(p)> \delta$ so that
$w=d^{-1}$ in a small neighborhood of $p$.
Otherwise,  the estimate follows directly.
Hence, at $p$, it holds
$$z_i=0,\  u^{ij}z_{ij}\geq 0.$$
We can rewrite $z$ as
$$z=\log w- \log{(-u)^\beta}-|\nabla u|^2$$ near $p$.
By computation,
\beqn
&&z_i=\frac{w_i}{w}-\frac{\beta u_i}{u}-2u_{ki}u_k,\\
&&z_{ij}=\frac{w_{ij}}{w}-\frac{w_iw_j}{w^2}-\frac{\beta u_{ij}}{u}
+\frac{\beta u_iu_j}{u^2}
-2u_{kij}u_{k}-2u_{ki}u_{kj}.
\eeqn
On the other hand, since $\det D^2u=w^{-1}$,
$$u^{ab}u_{abi}=(-\log{w})_i=-\frac{w_i}{w}.$$
Therefore we have
$$u^{ij}z_{ij}= u^{ij}\frac{w_{ij}}{w}
-u^{ij}\frac{w_iw_j}{w^2} -\frac{\beta n}{u}
+\beta\frac{u^{ij}u_iu_j}{u^2}+2\frac{w_k}{w}u_{k}-2\triangle u.$$
By (3.1),
\begin{eqnarray*}
u^{ij}\frac{w_iw_j}{w^2}
&=& \beta^2 u^{ij}\frac{ u_iu_j}{u^2}+\frac{4\beta|Du|^2 }{u}+4u_{ij}u_iu_j,\\
 \frac{w_k}{w}u_{k}&=&\frac{\beta|Du|^2 }{u}+2u_{ij}u_iu_j.
\end{eqnarray*}
It follows
$$u^{ij}z_{ij}=f-\frac{\beta n}{u}-2\bigtriangleup u
-\frac{2\beta|Du|^2 }{u}-
(\beta^2-\beta)u^{ij}\frac{ u_iu_j}{u^2}\geq 0.$$
Choosing $\beta=n$, we have
$$ (-u)[\det D^2u]^{\frac{1}{n}} \leq (-u)\bigtriangleup u  \leq C $$
at $p$. The lemma follows.
\end{proof}

For the lower bound estimate of the determinant,
we consider the Legendre function $u^*$ of $u$.
If $u$ is smooth,
$u^*$ is defined on $\Omega^*=Du(\Omega)$,
given by
$$u^*(y)=x\cdot y-u(x),$$
where $x$ is the point determined by $y=Du(x)$.
Differentiating $y=Du(x)$, we have
$$\det D^2u(x)=[\det D^2u^*(y)]^{-1}.$$
The dual functional with respect to the Legendre function is given by
$$J^*(u^*)=A^*(u^*)
-\int_{\Omega^*}f(Du^*)(yDu^*-u^*)\det D^2u^*\, dy,$$
where
$$A^*(u^*)=\int_{\Omega^*}G([\det D^2u^*]^{-1})\det D^2u^*\, dy.$$
If  $u$ is a solution to equation (2.6) in $\Omega$,
it is a local maximizer of the functional $J$.
Hence $u^*$ is a critical point of $J^*$ under local perturbation,
so it satisfies the Euler equation
of the dual functional $J^*$, namely in $\Omega^*$
\beq
{u^*}^{ij}w^*_{ij}=-f(Du^*),
\eeq
where
\beq
w^*=G({d^*}^{-1})-{d^*}^{-1}G'({d^*}^{-1}), \  d^*=\det D^2 u^*.
\eeq
Note that on the left hand side of (3.3), it is ${u^*}^{ij}$,
the inverse of $(u^*_{ij})$.

\begin{lem}
Let $u^*$ be a smooth convex solution to (3.3) in $\Omega^*$ in dimension $2$.
Assume that $u^*<0$ in $\Omega^*$ and $u^*=0$ on $\p \Omega^*$.
Then there is a positive constant $C$ depending only on $\sup|\nabla u^*|$,
$\sup |u^*|$, $\inf f$ and independent of
$\delta$ such that
$$(-u^*)^{2}\det D^2u^*\leq C.$$
\end{lem}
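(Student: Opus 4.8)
\smallskip
\noindent\textbf{Proof idea.}
This estimate is the Legendre-dual analogue of Lemma 3.1, and the plan is to prove it by the same kind of interior maximum-principle argument, applied now to the dual equation (3.3). Write $d^*=\det D^2u^*$ and $s=(d^*)^{-1}$; by the Legendre relation $s=\det D^2u$ at the corresponding point, so $w^*=G(s)-sG'(s)$ and, differentiating, $(w^*)_i=s^2G''(s)\,(\log d^*)_i$ with $(\log d^*)_i={u^*}^{ab}u^*_{abi}$. Set $\phi(s)=s^2G''(s)<0$. I would study the auxiliary function
\[
z^*=\log d^*+2\log(-u^*)+\gamma|\nabla u^*|^2,
\]
where $\gamma>0$ is a small constant to be chosen. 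Since $\log(-u^*)\to-\infty$ on $\partial\Omega^*$, a routine reduction (replacing $\Omega^*$ by a sublevel set $\{z^*>\mathrm{const}\}$, which is compactly contained in $\Omega^*$ once $\sup z^*$ is large enough) lets us assume $z^*$ attains an interior maximum at a point $p$, so that $z^*_i=0$ and ${u^*}^{ij}z^*_{ij}\le0$ there.

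At $p$ the first-order condition reads $(\log d^*)_i=-2u^*_i/u^*-2\gamma u^*_{ki}u^*_k$. Differentiating (3.3) once more and using $(w^*)_{ij}=\phi(s)(\log d^*)_{ij}-s\phi'(s)(\log d^*)_i(\log d^*)_j$, one gets
\[
\phi(s)\,{u^*}^{ij}(\log d^*)_{ij}=-f(Du^*)+s\phi'(s)\,{u^*}^{ij}(\log d^*)_i(\log d^*)_j.
\]
Multiplying the inequality ${u^*}^{ij}z^*_{ij}\le0$ by $|\phi(s)|$ (so as not to divide by the possibly small quantity $\phi$), substituting this identity together with the first-order relation, and using ${u^*}^{ij}u^*_{ki}u^*_{kj}=\Delta u^*$, one is led to an inequality of the schematic form
\[
2\gamma|\phi|\,\Delta u^*\ \le\ -f(Du^*)+b_1\,\frac{{u^*}^{ij}u^*_iu^*_j}{(u^*)^2}+b_2\,\frac{|\nabla u^*|^2}{u^*}+b_3\sum_{k,l}u^*_{kl}u^*_ku^*_l+|\phi|\,\frac{2n}{-u^*},
\]
where the $b_j$ are explicit combinations of $\phi(s)$ and $s\phi'(s)$ and $b_3=O(\gamma^2)$. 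The decisive structural facts, both \emph{independent of $\delta$}, come from condition (b): $|\phi(s)|=s^2|G''(s)|\le C_1$, and, writing $s\phi'(s)=2\phi(s)+s^3G'''(s)$ with $s^3G'''(s)=\phi(s)\,dG'''(d)/G''(d)$, also $|s\phi'(s)|\le C_1(2+C_2)$. Condition (c) gives in addition $s^2|G''(s)|>\tfrac{n-1}{n}\,sG'(s)$, which in dimension $n=2$ is used, together with the identity $\det D^2u^*=\lambda_1\lambda_2$, to estimate $\sum u^*_{kl}u^*_ku^*_l$ and ${u^*}^{ij}u^*_iu^*_j$ against $\Delta u^*$, $\det D^2u^*$ and $|\nabla u^*|^2$. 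Then $\gamma$ is fixed small (depending only on $C_1,C_2$ and $\sup|\nabla u^*|$) so that the $\sum u^*_{kl}u^*_ku^*_l$ term is absorbed into the left-hand side; the $|\nabla u^*|^2/u^*$ term and the last term become bounded after multiplying through by $-u^*>0$; and $-f(Du^*)\le-\inf f$, which is the source of the dependence on $\inf f$. After multiplying by $-u^*$ one obtains $(-u^*)\Delta u^*\le C$ at $p$ with $C=C(\sup|\nabla u^*|,\sup|u^*|,\inf f)$; since $\Delta u^*\ge 2\sqrt{\det D^2u^*}$ in dimension two this is $(-u^*)^2\det D^2u^*\le C$ at $p$, and maximality of $z^*$ at $p$ propagates it to all of $\Omega^*$.

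The step I expect to be the main obstacle is exactly the control of these extra terms, which do not occur in Lemma 3.1: there $w=(\det D^2u)^{-1}$ \emph{exactly}, so $\log w=-\log\det D^2u$, the cubic-gradient terms cancel identically, and the analogous inverse-Hessian term appears as $-(\beta^2-\beta)u^{ij}u_iu_j/u^2$, which is $\le0$ once $\beta\ge1$. Here $w^*$ depends on $\det D^2u^*$ only through the nonlinear profile $G$, so those cancellations disappear; one must instead bound the coefficients $s\phi'(s)/\phi(s)$, $1/\phi(s)$ and the like purely through the structural conditions (b)--(c), uniformly in $\delta$ --- possibly splitting according to whether $\det D^2u^*\le\delta^{-1}$ or $\det D^2u^*>\delta^{-1}$, the former case being essentially Lemma 3.1. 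Checking that with these choices all of the resulting terms can be either absorbed into $\Delta u^*$ or discarded, with every sign correct and making essential use of $n=2$, is where the real work lies.
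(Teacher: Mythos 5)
Your proposal follows essentially the same route as the paper: the same auxiliary function $-\log d^*-\beta\log(-u^*)-\alpha|\nabla u^*|^2$ (you fix $\beta=2$ at the outset, the paper sets $\beta=n=2$ at the end), the same interior extremum argument, the same appeal to condition (b) to control the coefficients coming from $G''$ and $G'''$, and the same essential use of the two-dimensional identity ${u^*}^{11}+{u^*}^{22}=\Delta u^*/\det D^2u^*$ to tame the inverse-Hessian term. That said, at the point you single out as ``where the real work lies'' your sketch and the paper's argument diverge in two concrete ways worth noting. First, you propose multiplying ${u^*}^{ij}z^*_{ij}\le 0$ by $|\phi(s)|=s^2|G''(s)|$ to avoid dividing by a possibly small coefficient, and to split cases according to $\det D^2u^*\lessgtr\delta^{-1}$; the paper instead divides through by $G''(d^{*-1})(d^*)^{-3}$, bounds the resulting coefficient $-(d^*)^2/G''(d^{*-1})$ using only the one-sided bound in (b), and does not introduce any $\delta$-dependent case split. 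Second, and more importantly, the paper's actual termination of the argument is a dichotomy you do not anticipate: after reducing via the $n=2$ identity to an inequality of the form $-C_1^{-1}\inf f+C'\frac{|\nabla u^*|^2}{(u^*)^2}\frac{\Delta u^*}{\det D^2u^*}-\frac{\beta n}{u^*}+C''\frac{|\nabla u^*|^2}{u^*}-\alpha\Delta u^*\ge0$, it considers whether $\frac{\alpha}{2}\Delta u^*\le C'\frac{|\nabla u^*|^2}{(u^*)^2}\frac{\Delta u^*}{\det D^2u^*}$; if so, one reads off $(-u^*)^2\det D^2u^*\le C$ directly from that inequality, and if not, the inverse-Hessian contribution is absorbed into $-\frac{\alpha}{2}\Delta u^*$, after which $(-u^*)\Delta u^*\le C$ (hence $(-u^*)^2\det D^2u^*\le C$) follows. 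Your sketch gestures at ``absorbing or discarding'' the terms but does not supply this two-case mechanism, which is the specific device that closes the estimate; filling in your outline would lead you to rediscover it (or an equivalent), so the approach is sound but incomplete.
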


\begin{proof}
We consider
$$z=-\log d^*-\log(-u^*)^\beta-\alpha|\nabla u^*|^2,$$
where $\alpha$, $\beta$ are positive numbers to be determined below.
Since $z$ tends to $\infty$ on $\partial \Omega^*$, it must attain its minimum
at some point $p\in{\Omega^*}$. At $p$ we have
$$z_i=0, \  {u^*}^{ij}z_{ij}\geqslant 0.$$
By (3.4), we compute
\beqn
w^*_i&=&G''({d^*}^{-1}){d^*}^{-3}d^*_i,\\
w^*_{ij}&=&-G'''({d^*}^{-1}){d^*}^{-5}d^*_id^*_j
-3G''({d^*}^{-1}){d^*}^{-4}d^*_id^*_j
+G''({d^*}^{-1}){d^*}^{-3}d^*_{ij}.
\eeqn
On the other hand, by computation,
\beqn
&&z_i=-\frac{d^*_i}{d^*}-\beta\frac{u^*_i}{u^*}-2\alpha u^*_{ki}u^*_k,\\
&&z_{ij}=-\frac{d^*_{ij}}{d^*}
+\frac{d^*_i d^*_j}{{d^*}^2}
-\beta\frac{u^*_{ij}}{u^*}+\beta\frac{u^*_iu^*_j}{{u^*}^2}
-2\alpha u^*_{kij}u^*_k-2\alpha u^*_{ki}u^*_{kj}.
\eeqn
It follows
$$ {u^*}^{ij}z_{ij}
=-\frac{{u^*}^{ij}d^*_{ij}}{d^*}+
\frac{{u^*}^{ij}d^*_i d^*_j}{{d^*}^2}
-\beta\frac{n}{u^*}+\beta\frac{{u^*}^{ij}u^*_i u^*_j}{{u^*}^2}
-2\alpha \frac{d^*_k}{d^*}u^*_k-2\alpha\bigtriangleup u^*.$$
By (3.6) and equation (3.3), we have
$$\frac{{u^*}^{ij}d^*_{ij}}{d^*}
=-\frac{{d^*}^2}{G''({d^*}^{-1})}f
+\frac{{d^*}^{-1}G'''({d^*}^{-1})}{G''({d^*}^{-1})}\frac{{u^*}^{ij}d^*_i d^*_j}{{d^*}^2}
+3\frac{{u^*}^{ij}d^*_i d^*_j}{{d^*}^2}.$$
We may assume that $f(p)<0$.
By condition (b) for $G$,
$$\frac{{d^*}^2}{G''({d^*}^{-1})}\leq -C_1^{-1},\  \
\left|\frac{{d^*}^{-1}G'''({d^*}^{-1})}{G''({d^*}^{-1})}\right|\leq C_2.$$
Hence,
$$\frac{{u^*}^{ij}d^*_{ij}}{d^*}\geq
C_1^{-1}\inf f+(3-C_2)\frac{{u^*}^{ij}d^*_i d^*_j}{{d^*}^2}.$$
So we have
$${u^*}^{ij}z_{ij}
\geq-C_1^{-1}\inf f+(C_2-2)\frac{{u^*}^{ij}d^*_i d^*_j}{{d^*}^2}
-\frac{\beta n}{u^*}+\beta\frac{{u^*}^{ij}u^*_i u^*_j}{{u^*}^2}
-2\alpha \frac{d^*_k}{d^*}u^*_k-2\alpha\bigtriangleup u^*.$$
By (3.7),
\beqs
\frac{{u^*}^{ij}d^*_i d^*_j}{{d^*}^2}&=&
\beta^2\frac{{u^*}^{ij}u^*_iu^*_j}{{u^*}^2}
+4\alpha \beta\frac{|\nabla u^*|^2}{u^*}+4\alpha^2 u^*_{lk}u^*_lu^*_k,\\
\frac{d^*_k}{d^*}u^*_k&=&
-\beta\frac{|\nabla u^*|^2}{u^*}-2\alpha u^*_{lk}u^*_lu^*_k.
\eeqs
Therefore
\beqs
&&-C_1^{-1}\inf f+[\beta+(C_2-2)\beta^2]\frac{{u^*}^{ij}u^*_iu^*_j}{{u^*}^2}
-\frac{n\beta}{u^*}\\
&&\ \ \ +[4(C_2-2)+2]\alpha \beta\frac{|\nabla u^*|^2}{u^*}
+[4(C_2-2)+4]\alpha^2 u^*_{lk}u^*_lu^*_k
-2\alpha\bigtriangleup u^*\geq 0.
\eeqs
Choose $\alpha$ small enough 
depending on $\sup|\nabla u^*|$ such that
$$[4(C_2-2)+4]\alpha^2 u^*_{lk}u^*_lu^*_k\leq \alpha\bigtriangleup u^*.$$
Using the fact
${u^*}^{11}+{u^*}^{22} =\frac{\bigtriangleup u^*}{\det D^2u^*}$ in dimension 2,
we have
$$\frac{{u^*}^{ij}u^*_iu^*_j}{{u^*}^2}\leq
\frac{|\nabla u^*|^2}{{u^*}^2}\frac{\bigtriangleup u^*}{\det D^2u^*}.$$
It follows
$$-C_1^{-1}\inf f
+C' \frac{|\nabla u^*|^2}{{u^*}^2}\frac{\bigtriangleup u^*}{\det D^2u^*}
-\frac{\beta n}{u^*} +C''\frac{|\nabla u^*|^2}{u^*}
-\alpha\bigtriangleup u^*\geq 0,$$
where $C'$, $C''$ are constants depending only on
$\alpha$, $\beta$, $C_1$ and $C_2$.
If $$\frac{\alpha}{2}\bigtriangleup u^*
-C'\frac{|\nabla u^*|^2}{{u^*}^2}\frac{\bigtriangleup u^*}{\det D^2u^*}
\leq 0,$$
we obtain
$$(-u^*)^2\det D^2u^*\leqslant C$$
at $p$. Otherwise, we have
$$-C_1^{-1}\inf f-\frac{\beta n}{u^*} +C''\frac{|\nabla u^*|^2}{u^*}
-\frac{\alpha}{2}\bigtriangleup u^*\geq 0.$$
Hence, we also obtain
$$(-u^*)^2\det D^2u^*\leq (\bigtriangleup u^*)^2(-u^*)^2\leqslant C$$
at $p$. The lemma follows by  choosing $\beta=n=2$.
\end{proof}

\begin{rem}\ \newline
(i) The determinant estimates above is independent of $\delta$.
This leads us to use the approximation $\{G_k\}$;
\newline
(ii) The estimate depends only on $\inf f$. This is crucial in Section 7;
\newline
(iii) In Lemma 3.2, the estimate only holds in dimension 2.
Since if we do not have the relation
${u^*}^{11}+{u^*}^{22} =\frac{\bigtriangleup u^*}{\det D^2u^*}$,
we can not deal with the term
$\frac{{u^*}^{ij}u^*_iu^*_j}{{u^*}^2}$ in the proof.
This is why we can not extend Theorem 1.1 to higher dimensions.
\end{rem}

\vskip 10pt

To apply the above determinant estimates, we first introduce
the {\it modulus of convexity} for convex functions.
The modulus of convexity of $u$ at $x$ is defined by
\beq
h_{u, x}(r)=\sup \{\delta\geq 0\ |\ S_{\delta, u}(x)\subset B_r(x)\},\ r>0
\eeq
and the modulus of convexity of $u$ on $\Omega$ is defined by
\beq
h_{u, \Omega}(r)=\inf_{x\in \Omega}h_{u, x}(r),
\eeq
where
$$S_{\delta, u}(x)=\{y\in\Omega\ |\ u(y)<\delta+a_x(y)\}$$
and $a_x$ is a tangent plane of $u$ at $x$.
When no confusions arise, we will also write $S_{\delta, u}(x)$ as
$S_{\delta, u}$ or $S_\delta$, for brevity.

\begin{lem}
Let $u\in{C^4(\Omega)}$ be a locally uniformly convex solution 
to (2.6) in dimension 2.

(i) Assume $f\in{L^\infty(\Omega)}$. Then
$$\|u\|_{W^{4, p}(\Omega')}\leq C$$
for any $p>1$ and $\Omega'\subset\subset\Omega$,
where $C$ depends on $n$, $p$, $\sup |f|$, $dist(\Omega', \partial \Omega)$
and the modulus of convexity of $u$.

(ii) Assume $f\in{C^\alpha(\Omega)}$. Then
$$\|u\|_{C^{4, \alpha}(\Omega')}\leq C$$
for any $\alpha\in{(0,1)}$ and $\Omega'\subset\subset\Omega$,
where $C$ depends on $n$, $\alpha$, $\sup |f|$, 
$dist(\Omega', \partial \Omega)$
and the modulus of convexity of $u$.
\end{lem}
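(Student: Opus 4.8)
The plan is to derive interior $W^{4,p}$ and $C^{4,\alpha}$ estimates for equation (2.6) by the standard two-stage strategy for Abreu-type equations: first obtain two-sided bounds on the Hessian determinant $d=\det D^2u$ and on the auxiliary quantity $w=G'(d)$ inside sections $S_\delta$, then feed these into linear elliptic theory. The starting point is the observation that equation (2.6), $U^{ij}w_{ij}=f$, together with $w=G'(d)$, is a coupled system: the first equation says $w$ solves a linear equation whose coefficients $U^{ij}$ are the cofactors of $D^2u$, and the second recovers $d$ (hence, after solving a Monge--Amp\`ere equation, $u$ itself) from $w$. Because of the normalization built into the modulus of convexity, it suffices to prove the estimate in a fixed normalized section: after an affine transformation $T$ (which changes $G$ only through $\delta\mapsto|T|^{-2}\delta$ and keeps $C_1,C_2,C_3$ fixed, by Remark~2.4), we may assume $S_{\delta,u}(x_0)$ is comprised between two balls of controlled radii, $u=0$ on $\partial S$, and $-1\le u\le 0$.

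The first substantive step is the \emph{upper bound for the determinant}: apply Lemma~3.1 in the normalized section to get $(-u)^n\det D^2u\le C$, hence $\det D^2u\le C$ on any interior subsection. The second, harder step is the \emph{lower bound for the determinant}. Here one passes to the Legendre transform $u^*$ on $\Omega^*=Du(\Omega)$, which (as explained in the excerpt) satisfies the dual equation (3.3)--(3.4); applying Lemma~3.2 to $u^*$ gives $(-u^*)^2\det D^2u^*\le C$ in dimension $2$, i.e. an \emph{upper} bound for $\det D^2u^*=(\det D^2u)^{-1}$, which is exactly a positive lower bound $\det D^2u\ge c>0$ on interior subsections. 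This is where the restriction $n=2$ enters, through the identity ${u^*}^{11}+{u^*}^{22}=\triangle u^*/\det D^2u^*$ used in Lemma~3.2. With $0<c\le d\le C$ established, condition (a) gives $w=G'(d)=d^{-1}$ once $d\ge\delta$, and in any case $w=G'(d)$ is bounded above and below by constants independent of $\delta$ (using $G'>0$ and the structure in (b), (c)); moreover the eigenvalues of $D^2u$ are pinched: $\triangle u$ is bounded above by the upper determinant bound combined with a lower eigenvalue bound coming from $\det D^2u\ge c$ and $\triangle u$ control in the normalized section, so $c' I\le D^2u\le C' I$.

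Once $D^2u$ is bounded above and below, the cofactor matrix $(U^{ij})$ is uniformly elliptic with bounded measurable coefficients, so (2.6) becomes a \emph{uniformly elliptic linear equation} $U^{ij}w_{ij}=f$ for $w$ with right-hand side in $L^\infty$ (resp. $C^\alpha$). By the Krylov--Safonov estimate, $w\in C^\beta_{loc}$ for some $\beta>0$; then $d=G'^{-1}(w)$ is $C^\beta_{loc}$ (the inverse of $G'$ is smooth and nondegenerate on the compact range of $d$ by (b)), so $\det D^2u\in C^\beta_{loc}$, and Caffarelli's interior regularity for the Monge--Amp\`ere equation (using the strict convexity encoded by the modulus of convexity, so that sections are comparable to ellipsoids) upgrades $u$ to $C^{2,\beta}_{loc}$. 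Now $(U^{ij})\in C^\beta_{loc}$, so Schauder theory applied to $U^{ij}w_{ij}=f$ yields $w\in C^{2,\alpha}_{loc}$ when $f\in C^\alpha$ (resp. $W^{2,p}_{loc}$ when $f\in L^\infty$, by Calder\'on--Zygmund/$W^{2,p}$ theory), hence $d=\det D^2u\in C^{2,\alpha}_{loc}$, and the Monge--Amp\`ere Schauder/Pogorelov-type estimates (Caffarelli--Guti\'errez, or the $W^{2,p}$ estimates of Caffarelli) give $u\in C^{4,\alpha}_{loc}$ (resp. $W^{4,p}_{loc}$). Tracking the dependencies through this bootstrap, every constant depends only on $n$, $p$ or $\alpha$, $\sup|f|$ (and in the $C^\alpha$ case the $C^\alpha$-norm of $f$), $\mathrm{dist}(\Omega',\partial\Omega)$, and the modulus of convexity of $u$, as claimed. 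The main obstacle I anticipate is the lower determinant bound: it is genuinely two-dimensional, requires the Legendre-dual equation and the delicate maximum-principle argument of Lemma~3.2, and it is the only place the hypothesis $n=2$ cannot be circumvented; the subsequent elliptic bootstrap is essentially routine once the two-sided bounds $0<c\le\det D^2u\le C$ and $c'I\le D^2u\le C'I$ are in hand.
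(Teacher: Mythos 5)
Your outline agrees with the paper through the determinant bounds: upper bound on $\det D^2u$ from Lemma~3.1, lower bound from Lemma~3.2 applied to the Legendre dual (the only place $n=2$ is used), exactly as in the paper. The bootstrap afterward, however, contains a genuine gap.

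You assert that once $0<c\le\det D^2u\le C$ is known, the Hessian is pinched, $c'I\le D^2u\le C'I$, ``by the upper determinant bound combined with a lower eigenvalue bound coming from $\det D^2u\ge c$ and $\triangle u$ control in the normalized section.'' This is circular and in fact false as stated: in dimension $2$, $\det D^2u\ge c$ gives $\lambda_1\lambda_2\ge c$ but no lower bound on the smaller eigenvalue, and two-sided bounds on the determinant alone do \emph{not} imply bounds on $\triangle u$ (consider $u=\epsilon^{-1}x_1^2+\epsilon x_2^2$, which has constant determinant but unbounded Hessian as $\epsilon\to 0$). The ``$\triangle u$ control in the normalized section'' you invoke is never established and is precisely what needs to be proved. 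Consequently you cannot yet conclude that $(U^{ij})$ is uniformly elliptic, which your subsequent Krylov--Safonov/Schauder bootstrap relies on.

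The paper avoids this by going through the Caffarelli--Guti\'errez H\"older theory for the \emph{linearized} Monge--Amp\`ere equation, which needs only the two-sided determinant bounds (not Hessian bounds) to produce $C^\beta$ regularity of $w$, hence of $\det D^2u$. Only after that does Caffarelli's interior $W^{2,p}$ and $C^{2,\alpha}$ theory for the Monge--Amp\`ere equation yield Hessian control, and the linear elliptic bootstrap then proceeds as you describe. So the right order is: determinant bounds $\Rightarrow$ Caffarelli--Guti\'errez $\Rightarrow$ H\"older continuity of $\det D^2u$ $\Rightarrow$ Caffarelli $C^{2,\alpha}/W^{2,p}$ $\Rightarrow$ Schauder/Calder\'on--Zygmund bootstrap. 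Your proposal skips the Caffarelli--Guti\'errez step and assumes its consequence, which is where the argument breaks.
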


\begin{proof}
For any $x\in\Omega$, by Lemma 3.1, we have
$$\det D^2u(x)\leq C$$
where $C$ is a constant depending only on $f$, 
$\delta=dist(x, \partial \Omega)$ and $h_{u, \Omega}$.
Let $y=Du(x)\in \Omega^*$. By (3.9), (3.10), we have
$$S_{\delta^*, u^*}(y)\subset \Omega^*,$$
where $\delta^*=h_{u,\Omega}(\frac{\delta}{2})$.
Furthermore, since $|Du^*|\leq diam (\Omega)$,
we also have
$$dist(y,\partial \Omega^*)\geq \frac{\delta^*}{2 diam(\Omega)}.$$
Hence, by Lemma 3.2,
$$\det D^2u(x)=[\det D^2u^*(y)]^{-1}\geq C',$$
where $C'$ is a constant depending only on $f$,
$\delta$ and $h_{u, \Omega}$.

Once the determinant $\det D^2 u$ is bounded,
we also have the Holder continuity of $\det D^2 u$
by Caffarelli-Gutierrez's H\"older continuity
for linearized Monge-Amp\`ere equation \cite{CG}.
Then we have the $W^{2, p}$ and $C^{2,\alpha}$ regularity for $u$ by
Caffarelli's  $W^{2, p}$ and $C^{2,\alpha}$ estimates
for Monge-Amp\`ere equation \cite{Caf1, JW}, respectively.
Higher regularity then follows from the standard elliptic regularity theory 
\cite{GT}.
\end{proof}

We will estimate in Section 6 and 7 the modulus of convexity for the solution
$u$ in dimension 2. In Section 4 we consider the change of equation (2.6)
under a coordinate transformation and establish the a priori estimates
for the equation after the transformation.

\vskip 10pt

\section{Equations after rotations in $\mathbb R^{n+1}$}

\vskip 10pt

Equation (2.6) is invariant under transformations
of the $x$-coordinates in $\mathbb{R}^n$, but it changes
when taking transformations in $\mathbb{R}^{n+1}$.
We note that the affine maximal surface equation is invariant
under uni-modular transformations in $\mathbb{R}^{n+1}$,
which plays an important part \cite{TW1}.
In order to establish the estimate of the modulus of convexity,
we also need to consider the equation under rotations in $\mathbb R^{n+1}$.
In this section we will derive the new equation
under a rotation in $\mathbb R^{n+1}$ and
establish the a priori estimates for it.

For our purpose it suffices to consider the rotation
$z=T x$, given by
\begin{eqnarray}
&&z_1=-x_{n+1}, \\
&& z_2=x_2,\  ...,\  z_n=x_n, \\
&&z_{n+1}=x_1,
\end{eqnarray}
which  fixes $x_2, ..., x_n$ axes.
Assume that the graph of $u$,
$\mathcal M=\{(x, u(x))\in{\mathbb R^{n+1}}  \  |  \  x\in\Omega\}$,
can be represented by a convex function
$z_{n+1}=v(z_1, ...,  z_n)$ in $z$-coordinates, in a domain $\hat \Omega$.
To derive the equation for $v$, we compute the change of the functional $A_0$.

\beqn
A(u)&=&\int_\Omega G(\det D^2u)\,dx\\
&=&\int_\Omega G\left(\frac{\det D^2u}{(1+|Du|^2)^{\frac{n+2}{2}}}
(1+|Du|^2)^{\frac{n+2}{2}}\right) \,dx\nonumber\\
&=&\int_{\mathcal M} G\left(K(1+|Du|^2)^{\frac{n+2}{2}}\right)
(1+|Du|^2)^{-\frac{1}{2}} \,d\Sigma, \nonumber
\eeqn
where $K$ is the Gaussian curvature of $\mathcal M$
and $d\Sigma$ the volume element of the hypersurface.
It is easy to verify that
\beq
u_1=-\frac{1}{v_1}, \ u_2=\frac{v_2}{v_1}, \ ...,
\ u_n=\frac{v_n}{v_1},
\eeq
where $v_i=\frac{\partial v}{\partial z_i}$. So we have
$$1+|Du|^2=\frac{1+|Dv|^2}{v_1^2}.$$
Hence we obtain
\beq
A(u)
=\int_{\hat \Omega} G(v_1^{-(n+2)}\det D^2 v) (v_1^2)^{\frac{1}{2}}\,dz
:=\hat A(v).
\eeq
In addition,
\beqs
\int_\Omega f(x)u(x) dx
&=&\int_{\mathcal M} f\cdot u\cdot (1+|Du|^2)^{-\frac{1}{2}} \,d\Sigma\\
&=&\int_{\hat \Omega}f(v, z_2, ..., z_n)\cdot (-z_1)\cdot (v_1^2)^{\frac{1}{2}}\,dz.
\eeqs
Let
$$\hat J(v)=\hat A(v)
-\int_{\hat \Omega}f(v, z_2, ..., z_n)\cdot (-z_1)\cdot (v_1^2)^{\frac{1}{2}}\,dz.$$
After computing the Euler equation for the functional $\hat J(v)$, we have

\begin{lem}
Let $u$ be a solution of (2.6). Let $T$ and $v$ be as above.
Then $v$ satisfies the equation
\beq
V^{ij}(d^{-1})_{ij}=g-f_1z_1v_1+f_1z_1+f
\eeq
in the set $\{z  \ | \ v_1^{-(n+2)}d> \delta \}$,
where $(V^{ij})$ is the cofactor matrix of $(v_{ij})$, $d=\det D^2v$ and
\beqs
g&=&2v^{kl}v_{kl1} \frac{1}{v_1}-(n+2) \frac{v_{11}}{v_1^{2}},\\
f&=&f(v, z_2, ..., z_n),\\
f_1&=&\frac{\partial f}{\partial x_1}(v, z_2, ..., z_n).
\eeqs
\end{lem}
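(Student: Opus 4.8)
\emph{The plan.} I would derive the stated equation as the Euler--Lagrange equation of the transformed functional $\hat J$, exploiting that $u$, being a solution of (2.6), is a critical point of $J$.

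\emph{Step 1: transfer of criticality.} Since (2.6) is the Euler equation of $J$, the function $u$ is a critical point of $J$ under every perturbation $u\mapsto u+s\eta$, $\eta\in C_0^\infty(\Omega)$. I apply the rotation $z=Tx$ to the graph of $u+s\eta$; for $|s|$ small this is again a graph $z_{n+1}=v_s(z)$, and in fact over the same domain $\hat\Omega$, since near $\partial\Omega$ one has $\eta\equiv 0$, so the boundary correspondence $z_1=-\varphi$ is unchanged. Differentiating the defining relation $-(u+s\eta)(x_1,z_2,\dots,z_n)=z_1$, $x_i=z_i$, and using $u_1=-1/v_1$ from (4.4), gives $\frac{d}{ds}\big|_0 v_s=v_1\,\eta(v(z),z_2,\dots,z_n)$. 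As $\eta$ runs over $C_0^\infty(\Omega)$ this exhausts $C_0^\infty(\hat\Omega)$, because $z\mapsto(v(z),z_2,\dots,z_n)$ is a diffeomorphism of $\hat\Omega$ onto $\Omega$ and $v_1$ is smooth and nonvanishing. By the computation preceding the lemma, $\hat J(v_s)=J(u+s\eta)$, hence $\delta\hat J(v)[\phi]=0$ for every $\phi\in C_0^\infty(\hat\Omega)$; that is, $v$ satisfies the Euler--Lagrange equation of $\hat J$.

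\emph{Step 2: computing the Euler--Lagrange equation.} On $\{v_1^{-(n+2)}d>\delta\}$ we have $G(v_1^{-(n+2)}d)=\log(v_1^{-(n+2)}d)$ and small perturbations remain in this set, so there the integrand of $\hat A$ is $\bigl(\log d-(n+2)\log v_1\bigr)(v_1^2)^{1/2}$. Using $\delta d=d\,v^{ij}\phi_{ij}$, the first variation is
\[
\delta\hat A(v)[\phi]=\int_{\hat\Omega}\Bigl(v_1 v^{ij}\phi_{ij}+\log\!\bigl(v_1^{-(n+2)}d\bigr)\phi_1-(n+2)\phi_1\Bigr)\,dz .
\]
Now I integrate by parts: the last term drops, $\partial_1\log(v_1^{-(n+2)}d)=v^{kl}v_{kl1}-(n+2)v_{11}/v_1$, and for the first term I write $v^{ij}=V^{ij}d^{-1}$ and integrate by parts twice, using the classical identity $\partial_i V^{ij}=0$, obtaining $\int v_1 v^{ij}\phi_{ij}=\int\phi\,V^{ij}(v_1 d^{-1})_{ij}$; expanding by the Leibniz rule and using $v^{ij}v_{1j}=\delta^i_1$ and $d^{-1}V^{ij}v_{1ij}=v^{kl}v_{kl1}$ converts $V^{ij}(v_1 d^{-1})_{ij}$ into $v_1 V^{ij}(d^{-1})_{ij}$ plus a multiple of $v^{kl}v_{kl1}$. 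Collecting terms gives $\delta\hat A(v)[\phi]=\int_{\hat\Omega}\phi\bigl(v_1 V^{ij}(d^{-1})_{ij}-v_1 g\bigr)\,dz$ with $g=2v^{kl}v_{kl1}/v_1-(n+2)v_{11}/v_1^2$, exactly as in the statement. Varying the linear term $\int f(v,z_2,\dots,z_n)(-z_1)(v_1^2)^{1/2}\,dz$ and integrating by parts once — tracking both the dependence of $f$ on $v$ through its first slot, $\partial_1[f(v,z_2,\dots,z_n)]=f_1 v_1$, and the dependence of the weight $(v_1^2)^{1/2}$ on $\phi_1$ — produces the remaining $f$-dependent terms $-f_1 z_1 v_1+f_1 z_1+f$ on the right-hand side. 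Setting $\delta\hat J(v)[\phi]=0$ for all $\phi$ and dividing by $v_1$ yields (4.7).

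\emph{Main obstacle.} The substance is the integration by parts in Step 2. Unlike the affine maximal surface functional, the rotated integrand carries the explicit weights $v_1^{-(n+2)}$ and $(v_1^2)^{1/2}$, so the first variation of $\hat A$ throws off lower-order terms beyond the ``$V^{ij}(d^{-1})_{ij}$''-part — these are precisely the ingredients of $g$ — which must be reorganized via $v^{ij}v_{1j}=\delta^i_1$ and $\partial_i V^{ij}=0$; and the linear term is no longer literally linear in $v$, so its variation contributes the $f_1$-terms. A secondary point, underlying Step 1, is to verify that the perturbations of $v$ arising from compactly supported perturbations of $u$ genuinely exhaust $C_0^\infty(\hat\Omega)$, which is what legitimizes passing from the weak identity to the pointwise equation (4.7).
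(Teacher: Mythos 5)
Your overall strategy---derive (4.7) as the Euler--Lagrange equation of $\hat J$, transferring the criticality of $J$ at $u$ to criticality of $\hat J$ at $v$ via the change of variables---is exactly what the paper intends by the phrase ``after computing the Euler equation for the functional $\hat J(v)$.'' Step~1 is correct and fills in a real gap: the relation $\frac{d}{ds}\big|_{0}v_s=v_1\,\eta(v(z),z')$ and the surjectivity onto $C_0^\infty(\hat\Omega)$ are precisely what is needed to pass from criticality of $J$ to criticality of $\hat J$. Your treatment of $\delta\hat A$ is also correct: using $\partial_j V^{ij}=0$ and $V^{ij}v_{1i}=d\,\delta_{j1}$ one gets $V^{ij}(v_1 d^{-1})_{ij}=v_1V^{ij}(d^{-1})_{ij}-v^{kl}v_{kl1}$, and collecting with $-\partial_1\log(v_1^{-(n+2)}d)$ gives $\delta\hat A(v)[\phi]=\int_{\hat\Omega}v_1\bigl(V^{ij}(d^{-1})_{ij}-g\bigr)\phi\,dz$ with $g$ exactly as stated.

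The gap is in the variation of the source term, and it is not just cosmetic. You correctly record the chain rule $\partial_1[f(v,z')]=f_1v_1$, but the $f$-contribution you then announce, $-f_1z_1v_1+f_1z_1+f$, is inconsistent with it. Writing $L(v)=\int f(v,z')(-z_1)v_1\,dz$, the first variation is $\delta L[\phi]=\int\bigl(-f_1z_1v_1\,\phi-fz_1\,\phi_1\bigr)\,dz$; integrating the second piece by parts gives $+\int\partial_1(fz_1)\phi\,dz=\int(f_1v_1z_1+f)\phi\,dz$, and the two $f_1z_1v_1$ terms \emph{cancel}, leaving simply $\delta L[\phi]=\int f\phi\,dz$. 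Dividing $\delta\hat J[\phi]=\int v_1\bigl(V^{ij}(d^{-1})_{ij}-g\bigr)\phi\,dz-\int f\phi\,dz=0$ by $v_1$ yields $V^{ij}(d^{-1})_{ij}=g+f/v_1$, \emph{not} the right-hand side of (4.7). A one-dimensional check confirms this: for $u(x)=-\log x$ one has $w''=2$, so $f\equiv2$, and $v(z)=e^z$ gives $(1/v'')''=e^{-z}$, $g=-e^{-z}$; here $g+f/v_1=-e^{-z}+2e^{-z}=e^{-z}$ matches, while $g-f_1z_1v_1+f_1z_1+f=-e^{-z}+2$ does not. So either Lemma~4.1's right-hand side carries an error --- which, for what it is worth, does not affect its only use in Lemma~4.3, where just a bound on the source by $\sup|\hat f|$, $\sup|\nabla\hat f|$ is invoked --- or the stated conclusion must be revised. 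Either way, the computation you describe in Step~2 cannot produce the $f$-terms you claim, and the final ``dividing by $v_1$'' step is not applied to them consistently; this needs to be fixed before the proof can be called complete.
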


\begin{rem}
In the proof of strict convexity in Section 6,
we will use the upper bound estimate for $\det D^2v$ given below.
Since the lower bound for $\det D^2v$ will not be used,
we do not need the explicit form of the equation for $v$
outside the set $\{z  \ | \ v_1^{-(n+2)}d> \delta \}$.
Therefore in (4.7), we calculate the Euler equation only
in the set $\{z  \ | \ v_1^{-(n+2)}d> \delta \}$.
\end{rem}

Next we prove a determinant estimate for $v$.
Assume $v$ satisfies
\beq
\begin{array}{l}
v\geq 0,\ v\ge z_1, \  v_1\geq 0, \ \ \text{and $v(0)$\ \
is as small as we want such that }\ \ \\
\text{for the positive constants $\epsilon$ and $c$   in $(0, \frac 12)$,
$\hat\Omega_{\epsilon, c}$ is a nonempty open set},
\end{array}
\eeq
where
$$\hat v=v-\epsilon z_1-c \  \text{and} \
\  \hat\Omega_{\epsilon, c}=\{z\ | \  \hat v(z)< 0\}.$$
Then $\hat v$ satisfies
\beq
\hat V^{ij}(\hat d^{-1})_{ij}
=\hat g-\hat f_1z_1(\hat v_1+\epsilon)+\hat f_1z_1+\hat f
\eeq
in the set $\{z  \ | \ (\hat{v}_1+\epsilon)^{-(n+2)}\hat{d}
> \delta \}\cap \hat\Omega_{\epsilon, c}$,
where $\hat d=\det D^2\hat v$ and
\beqn
\hat g&=&2\hat v^{kl}\hat v_{kl1}\frac{1}{\hat v_1+\epsilon}
-(n+2)\frac{\hat v_{11}}{(\hat v_1+\epsilon)^2},\\
\hat f&=&f(\hat v+\epsilon z_1+c, z_2, ..., z_n),\\
\hat f_1&=&\frac{\partial f}{\partial x_1}(\hat v+\epsilon z_1+c, z_2, ..., z_n).
\eeqn

\begin{lem}
Let $\hat v$ be as above.
Then there exists $C>0$
depending only on $\sup |\hat f|$, $\sup|\nabla \hat f|$,
$\sup_{\hat\Omega_{\epsilon, c}} |\hat v|$
and $\sup_{\hat\Omega_{\epsilon, c}}|D\hat v|$, 
but independent of $\delta$,
such that
$$(-\hat v)^n \det D^2\hat v\leq C.$$
\end{lem}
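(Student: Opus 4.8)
The plan is to run the maximum principle argument of Lemma~3.1 for the rotated equation (4.10); the new point is to absorb the extra terms $\hat g$, $\hat f_1z_1(\hat v_1+\epsilon)$, $\hat f_1z_1$, $\hat f$ on its right hand side. As in Lemma~3.1 we may assume $\hat v$ is smooth, locally uniformly convex in $\hat\Omega_{\epsilon,c}$ and continuous up to $\partial\hat\Omega_{\epsilon,c}$, the general case following from the approximation in Section~3. Set $\hat d=\det D^2\hat v$, $w=\hat d^{-1}$ and consider
\beq
z=-\log\hat d-n\log(-\hat v)-\alpha|\nabla\hat v|^2
\eeq
on $\hat\Omega_{\epsilon,c}$, where $\alpha>0$ is a small constant to be fixed below. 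Since $\hat v=0$ on $\partial\hat\Omega_{\epsilon,c}$ while $\hat d$ and $|\nabla\hat v|$ stay bounded there, $z\to+\infty$ near $\partial\hat\Omega_{\epsilon,c}$, so $z$ attains its minimum at an interior point $p$, where $z_i(p)=0$ and $\hat v^{ij}z_{ij}(p)\ge0$.

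If $(\hat v_1+\epsilon)^{-(n+2)}\hat d(p)\le\delta$, then $\delta<1$, $\epsilon<\frac12$ and $|\hat v_1|\le\sup|D\hat v|$ give $\hat d(p)\le(\sup|D\hat v|+1)^{n+2}$ independently of $\delta$; since $|\hat v(p)|$ and $|\nabla\hat v(p)|$ are controlled, $z(p)$ is bounded below by a constant $-C$ of the required type, whence $z\ge -C$ and $(-\hat v)^n\hat d\le e^{C}$ on $\hat\Omega_{\epsilon,c}$. Otherwise (4.10) holds near $p$, i.e. $\hat v^{ij}w_{ij}=\hat d^{-1}\big(\hat g-\hat f_1z_1(\hat v_1+\epsilon)+\hat f_1z_1+\hat f\big)$. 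Differentiating $z$ as in Lemma~3.1, using $z_i(p)=0$ to replace $w_i/w$ by $n\hat v_i/\hat v+2\alpha\hat v_{ki}\hat v_k$, and invoking $\hat v^{ij}\hat v_{ij}=n$, $\hat v^{ij}\hat v_{ki}\hat v_{kj}=\bigtriangleup\hat v$, $\hat v^{ij}\hat v_{ijk}=\partial_k\log\hat d=-w_k/w$ and $\hat v^{kl}\hat v_{kl1}=\partial_1\log\hat d=-w_1/w$, one turns $\hat v^{ij}z_{ij}(p)\ge0$, after discarding the nonpositive contributions, into an upper bound for $\alpha\bigtriangleup\hat v$ in terms of $\hat g$, the $\hat f$-terms, and $C(1+|\nabla\hat v|^2)/(-\hat v)$. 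On $\hat\Omega_{\epsilon,c}$ the data $\hat f$, $\hat f_1$ are bounded by $\sup|\hat f|$, $\sup|\nabla\hat f|$, and $z_1$ is bounded (from (4.9), $z_1\le v<\epsilon z_1+c$ forces $z_1<c/(1-\epsilon)$, and $v\ge 0$ bounds $z_1$ below), so the $\hat f$-terms are harmless.

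It remains to treat $\hat g=2(\hat v_1+\epsilon)^{-1}\hat v^{kl}\hat v_{kl1}-(n+2)(\hat v_1+\epsilon)^{-2}\hat v_{11}$. Using $\hat v^{kl}\hat v_{kl1}=-w_1/w$ and $z_1(p)=0$, its third order part becomes $-2n(\hat v_1+\epsilon)^{-1}\hat v_1/\hat v-4\alpha(\hat v_1+\epsilon)^{-1}\hat v_{k1}\hat v_k$, of which $-2n(\hat v_1+\epsilon)^{-1}\hat v_1/\hat v$ equals $-2n/\hat v$ plus the nonpositive, hence harmless, term $2n\epsilon(\hat v_1+\epsilon)^{-1}/\hat v$; also $|\hat v_{k1}\hat v_k|\le(\bigtriangleup\hat v)\sup|D\hat v|$ and $0\le\hat v_{11}\le\bigtriangleup\hat v$, so the remaining third order term and the second order term are bounded by $C\alpha(\hat v_1+\epsilon)^{-1}\bigtriangleup\hat v$. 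Since $(\hat v_1+\epsilon)^{-1}$ is bounded on $\hat\Omega_{\epsilon,c}$, choosing $\alpha$ small (depending on $\sup|D\hat v|$) one absorbs these, along with the quartic term $\alpha^2\hat v^{ij}(\hat v_{ki}\hat v_k)(\hat v_{lj}\hat v_l)\le\alpha^2\sup|D\hat v|^2\bigtriangleup\hat v$, into $\frac{\alpha}{2}\bigtriangleup\hat v$, leaving $\frac{\alpha}{2}\bigtriangleup\hat v\le C(1+|\nabla\hat v|^2)/(-\hat v)+\sup|\hat f|$ at $p$. Multiplying by $-\hat v$ yields $(-\hat v)\bigtriangleup\hat v\le C$ at $p$, hence $(-\hat v)^n\hat d\le\big((-\hat v)\bigtriangleup\hat v/n\big)^n\le C$ at $p$, so $z(p)\ge -C$, so $z\ge -C$ on $\hat\Omega_{\epsilon,c}$, which is the claim. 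I expect the main obstacle to be exactly the term $\hat g$: unlike the right hand side of (2.6), it carries third and second order derivatives of $\hat v$ weighted by negative powers of $\hat v_1+\epsilon$, and handling it needs both the first order cancellation $z_i(p)=0$ and a careful choice of $\alpha$; the $\hat f$-terms are routine once $z_1$ is seen to be bounded.
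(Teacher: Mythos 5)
Your overall strategy --- maximum principle applied to $z=\log w-n\log(-\hat v)-\alpha|D\hat v|^2$, splitting off the case where $(\hat v_1+\epsilon)^{-(n+2)}\hat d\leq\delta$, and then using $z_i(p)=0$ to rewrite the third--order parts of $\hat g$ --- is exactly the paper's plan, and most of the bookkeeping is fine. But there is one step that does not work: you assert that $(\hat v_1+\epsilon)^{-1}$ is bounded on $\hat\Omega_{\epsilon,c}$ and use that to absorb the remaining pieces of $\hat g$ into $\frac{\alpha}{2}\bigtriangleup\hat v$. From (4.8) the only information is $v_1\geq 0$, hence $\hat v_1+\epsilon=v_1\geq 0$; there is no positive lower bound. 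Indeed, in the application in Section~6 the graph of $v$ contains the segment $\{(t,0,0):-1\leq t\leq 0\}$, so $v_1$ vanishes identically there, and $(\hat v_1+\epsilon)^{-1}$ is genuinely unbounded on $\hat\Omega_{\epsilon,c}$. Consequently your bound $4\alpha(\hat v_1+\epsilon)^{-1}|\hat v_{k1}\hat v_k|\leq C\alpha(\hat v_1+\epsilon)^{-1}\bigtriangleup\hat v$ cannot be absorbed into $\frac{\alpha}{2}\bigtriangleup\hat v$ no matter how small $\alpha$ is chosen.

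The paper avoids this by \emph{not} discarding the favorable second--order piece $-(n+2)(\hat v_1+\epsilon)^{-2}\hat v_{11}$ of $\hat g$ (which you treated as something to bound rather than something to exploit), and by using a sharper Cauchy--Schwarz on the cross terms: $|\hat v_{1k}|\leq\sqrt{\hat v_{11}\hat v_{kk}}$ gives
$$\sum_{k\geq 2}\frac{|\hat v_{1k}\hat v_k|}{\hat v_1+\epsilon}\leq\frac14\sum_{k\geq 2}\hat v_{kk}+C'\,\frac{\hat v_{11}}{(\hat v_1+\epsilon)^2},$$
so that the dangerous $(\hat v_1+\epsilon)^{-1}$--weighted cross terms are split into a small multiple of $\bigtriangleup\hat v$ (absorbed by $-2A\bigtriangleup\hat v$) plus a multiple of $\hat v_{11}/(\hat v_1+\epsilon)^2$ (absorbed by the retained $-(n+2)\hat v_{11}/(\hat v_1+\epsilon)^2$ after choosing $A$ small so that $n+2-4AC'>0$, cf.\ (4.17)--(4.19)). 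Your argument needs to be amended along these lines; as written, the absorption step fails precisely where $v_1$ is small, which is the geometrically relevant regime.
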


\begin{proof}
Consider
$$\eta=\log {w}-\beta\log{(-\hat v)}-A|D\hat v|^2,$$
where $w=\hat d^{-1}$, and $\beta$, $A$ are positive numbers
to be determined below.
Then $\eta$ attains its minimum at a point $p$ in $\hat\Omega_{\epsilon, c}$. 
Hence, at $p$, it holds
$$\eta_i=0,\  \hat v^{ij}\eta_{ij}\geq 0.$$
We can suppose that 
$p\in\{z  \ | \ (\hat v_1+\epsilon)^{-(n+2)}\hat d> \delta \}$.
Otherwise, we have
$$(\hat v_1+\epsilon)^{-(n+2)}\hat d\leq \delta$$
and then the estimate follows. By computation,
\beqn
\eta_i&=&\frac{w_i}{w}-\frac{\beta \hat v_i}{\hat v}-2A\hat v_{ki}\hat v_k,\\
\eta_{ij}&=&\frac{w_{ij}}{w}-\frac{w_iw_j}{w^2}-\frac{\beta \hat v_{ij}}{\hat v}
+\frac{\beta \hat v_i \hat v_j}{\hat v^2}
 -2A\hat v_{kij}\hat v_{k}-2A\hat v_{ki}\hat v_{kj},\\
\frac{w_k}{w}&=&-\hat v^{ij}\hat v_{ijk}.
\eeqn
By  (4.15),
$$\hat g=-2\frac{w_1}{w}\frac{1}{\hat v_1+\epsilon}
-(n+2)\frac{\hat v_{11}}{(\hat v_1+\epsilon)^2}.$$
Therefore we have
\beqs
\hat v^{ij}\eta_{ij}&=&-\frac{\hat v^{ij}w_iw_j}{w^2}
-\frac{w_1}{w}\frac{2}{\hat v_1+\epsilon}-\frac{\beta n}{\hat v}
-(n+2)\frac{\hat v_{11}}{(\hat v_1+\epsilon)^2}
+\frac{\beta \hat v^{ij}\hat v_i\hat v_j}{\hat v^2}+2A\frac{w_k}{w}\hat v_{k}\\
&&\  \  -2A\triangle \hat v-\hat f_1z_1(\hat v_1+\epsilon)+\hat f_1z_1+\hat f.
\eeqs
By (4.13),
\beqs
\frac{\hat v^{ij}w_iw_j}{w^2}&=&
\beta^2\hat v^{ij}\frac{ \hat v_i\hat v_j}{\hat v^2}
+4A^2\hat v_{ij}\hat v_i\hat v_j+4A\beta\frac{|D\hat v|^2}{\hat v},\\
\frac{w_1}{w}\frac{2}{\hat v_1+\epsilon}
&=&\frac{2\beta \hat v_1}{(\hat v_1+\epsilon)\hat v}
+4A\frac{\hat v_{1k}\hat v_k}{\hat v_1+\epsilon},\\
\frac{w_k}{w}\hat v_{k}
&=&\beta\frac{|D\hat v|^2}{\hat v}+2A\hat v_{ij}\hat v_i\hat v_j.
\eeqs
Hence, we have
\begin{eqnarray}
\hat v^{ij}\eta_{ij}&=&-(n+2)\frac{\hat v_{11}}{(\hat v_1+\epsilon)^2}
-4A\left(\frac{\hat v_{11}\hat v_1}{\hat v_1+\epsilon}
+\sum_{k=2}^{n}\frac{\hat v_{1k}\hat v_k}{\hat v_1+\epsilon}\right)
-\frac{2\beta \hat v_1}{(\hat v_1+\epsilon)\hat v}-2A\bigtriangleup \hat v
\nonumber\\
&&\quad  -\frac{\beta n}{\hat v}-2A\beta\frac{|D\hat v|^2}{\hat v}
-(\beta^2-\beta)\hat v^{ij}\frac{\hat v_i\hat v_j}{\hat v^2}
-\hat f_1z_1(\hat v_1+\epsilon)+\hat f_1z_1+\hat f.
\end{eqnarray}
We choose $\beta>1$ such that $\beta^2-\beta>0$.
By the positive definiteness of $\hat v_{ij}$,
it holds $\hat v_{1k}^2\leq \hat v_{11}\hat v_{kk}$ for any $k=2, ..., n$, so
there is $C'$ depending on $n$ and $|D\hat v|$, such that
\beq
\sum_{k=2}^{n}\frac{|\hat v_{1k}\hat v_k|}{\hat v_1+\epsilon}
\leq  \frac{1}{4}\sum_{k=2}^{n}\hat v_{kk}
+C'\frac{\hat v_{11}}{(\hat v_1+\epsilon)^2}
\leq\frac{1}{4} \bigtriangleup \hat v
+C'\frac{\hat v_{11}}{(\hat v_1+\epsilon)^2}.
\eeq
It follows
\beqn
&&-\frac{(n+2-4AC')\hat v_{11}}{(\hat v_1+\epsilon)^2}
-4A\frac{\hat v_{11}\hat v_1}{\hat v_1+\epsilon}
-\frac{2\beta \hat v_1}{(\hat v_1+\epsilon)\hat v}
-A\bigtriangleup \hat v-\frac{\beta n}{\hat v}\nonumber\\
&&\  \  \  \  \  \  \  \  \  \  \  \ -2A\beta\frac{|D\hat v|^2}{\hat v}
-\hat f_1z_1(\hat v_1+\epsilon)+\hat f_1z_1+\hat f
\geq 0.
\eeqn
Choosing $A$ small enough such that $n+2-4AC'>0$.
Then by a Schwarz inequality, there exists a $C_0>0$ depending
only on $|D \hat v|$ such that
\beq
-\frac{(n+2-4AC')\hat v_{11}}{(\hat v_1+\epsilon)^2}
-4A\frac{\hat v_{11}\hat v_1}{\hat v_1+\epsilon}\leq C_0A^2\hat v_{11}.
\eeq
By (4.18), (4.19),  we have
$$0\leq C_0A^2\hat v_{11}
-\frac{2\beta \hat v_1}{(\hat v_1+\epsilon)\hat v}
-\frac{\beta n}{\hat v}-A\bigtriangleup \hat v
-2A\beta\frac{|D\hat v|^2}{\hat v}
-\hat f_1z_1(\hat v_1+\epsilon)+\hat f_1z_1+\hat f.$$
Choosing $A$ small enough furthermore
such that $C_0A^2\leq\frac{A}{2}$, 
and observing that 
$$\frac{2\beta \hat v_1}{(\hat v_1+\epsilon)\hat v}
=\frac{2\beta}{\hat v}-\frac{2\beta \epsilon}{(\hat v_1+\epsilon)\hat v}
\geq \frac{2\beta}{\hat v},$$
we have
$$-\frac{\beta (n+2)}{\hat v}-\frac{A}{2}\bigtriangleup \hat v
-2A\beta\frac{|D\hat v|^2}{\hat v}
-\hat f_1z_1(\hat v_1+\epsilon)+\hat f_1z_1+\hat f
\geq 0,$$
which implies
$$(-\hat v)\triangle \hat v\leq C$$
at $p$. Hence, choosing $\beta=n$, the lemma follows by
$$e^{\eta(x)}\geq e^{\eta(p)}=\hat d^{-1}(-\hat v)^{-n}e^{-A |D\hat v|^2}
\geq [\frac{(-\hat v)\triangle \hat v}{n}]^{-n}
e^{-A |D\hat v|^2}\geq C.$$
\end{proof}

\vskip 10pt

\section{Approximation}

\vskip 10pt

We will use a penalty method and solutions to the second boundary
value problem to construct a sequence of smooth convex solutions to (2.6)
to approximate the maximizer of $J(u)$.
This section is similar to \S6 in \cite{TW2}.

First, we consider a second boundary value problem
with special non-homogenous
term $f$. Let $B=B_R(0)$ be a ball with $\Omega\subset\subset B$
and $\varphi\in{C^2(\overline B)}$ be a uniformly convex function in $B$
vanishing on $\partial B$.
Suppose $H$ is a nonnegative smooth function
defined in the interval $(-1, 1)$ such that
\beq
H(t)=\begin{cases}
(1-t)^{-2n},  &\  t\in{(\frac{1}{2}, 1)},\\
(1+t)^{-2n},  &\  t\in{(-1,\frac{1}{2})}.\\
\end{cases}
\eeq
Extend the function $f$ to $B$ such that
$$f(x, u)=\begin{cases}
f(x)\ &\text{if}\  x\in{\Omega},\\
h(u-\varphi(x))  \  &\text{if}\ x\in{B\setminus \Omega},
\end{cases}$$
where $h(t)=H'(t)$.

\begin{lem}
Let $f(x, u)$ be as above.
Suppose $\partial \Omega$ is Lipschitz continuous.
Then there exists a locally uniformly convex solution
to the second boundary problem
\begin{eqnarray}
U^{ij}w_{ij}&=&f(x, u)\ \ \text{in}\  B,\\
w&=&G'(d), \ \text{in}\ B,\nonumber\\
u&=&\varphi  \ \  \text{on}\ \partial B,\nonumber\\
w&=&1  \  \  \text{on}\  \partial B\nonumber
\end{eqnarray}
with $u\in{W^{4, p}_{loc}(B)\cap C^{0,1}(\overline B)}$, for all $p<\infty$,
and $w\in{C^0(\overline\Omega)}$.
\end{lem}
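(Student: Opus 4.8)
The plan is to establish existence by the method of continuity, the real content being a priori estimates that are uniform along the path and that, in the interior, are precisely the determinant estimates of Section~3. I would embed (5.2) in a family indexed by $t\in[0,1]$: keep the boundary conditions $u=\varphi$ and $w=1$ on $\partial B$, and deform the data so that at $t=0$ one has $\Omega$, $\varphi$ replaced by $B$ and the quadratic $\tfrac12(|x|^2-R^2)$ with the penalty switched off, for which $u(x)=\tfrac12(|x|^2-R^2)$ is an explicit locally uniformly convex solution ($D^2u=I$, $\det D^2u=1$, $w\equiv1$, $U^{ij}w_{ij}=0$); the true $f$, $\varphi$, $\Omega$ and penalty are restored as $t$ runs to $1$. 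Let $\Sigma\subset[0,1]$ be the set of $t$ admitting a solution $u_t\in W^{4,p}_{loc}(B)\cap C^{0,1}(\overline B)$. Then $0\in\Sigma$, and openness follows from the implicit function theorem once one notes that the linearization of (5.2) at a locally uniformly convex solution is a fourth order operator, uniformly elliptic on compact subsets, with leading part $U^{ij}\partial_{ij}\big(G''(d)\,U^{kl}\partial_{kl}(\cdot)\big)$ and the induced boundary conditions $\eta=0$, $G''(d)U^{kl}\eta_{kl}=0$ on $\partial B$ (a Navier-type, hence coercive, problem); it is invertible between suitable (weighted) H\"older spaces because $J$ is concave on convex competitors ($F''<0$), so this linearization is negative definite. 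Everything then reduces to closedness of $\Sigma$, i.e.\ to $t$-independent a priori estimates.

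For those estimates one argues as follows. Convexity and $u=\varphi$ on $\partial B$ give $\sup_B u\le 0$, and a lower barrier (a large multiple of $\varphi$, or a paraboloid supporting $\varphi$ along $\partial B$) gives $\inf_B u\ge -C$. The penalty term then confines $u$ in $B\setminus\Omega$: because $H$ blows up at $\pm1$ while $\int_B G(\det D^2u)\,dx$ stays controlled (bounded above by the Jensen-type inequality (2.9), below by $\inf G$), the classical penalty argument — a maximum-principle/barrier estimate exploiting that $-H'(u-\varphi)$ is large when $u-\varphi$ nears $1$ — forces $\sup_{B\setminus\Omega}|u-\varphi|\le 1-\sigma$ with $\sigma>0$ depending only on the data, hence $\sup_B|f(x,u)|+\sup_B|D_xf(x,u)|\le C$ uniformly in $t$. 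A convex function attains $\sup|Du|$ on $\partial B$, where it is controlled by $\varphi$ and the lower barrier, so $\sup_B|Du|\le C$; this is also the asserted $C^{0,1}(\overline B)$ bound.

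With bounds on $u$, $Du$ and $f(x,u)$ in hand, one applies Lemma~3.1 on the sections $S_{\delta,u}(x)$ (where $u$ minus a supporting plane, which solves the same equation, plays the role of the normalized solution) to get $\det D^2u\le C$ on compact subsets of $B$, and the Legendre transform together with Lemma~3.2 (this is where $n=2$ is used) to get $\det D^2u\ge c>0$ on compact subsets; near $\partial B$, barriers for $w$ built from $\varphi$ and $\mathrm{dist}(\cdot,\partial B)$ together with the prescribed value $w=1$ bound $w$ above and below in a one-sided neighborhood of $\partial B$, and the interior bounds then propagate so that $w$ extends continuously up to $\partial\Omega$. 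Once $\det D^2u$ is pinched between two positive constants on each compact subset, Caffarelli's strict convexity and $C^{1,\alpha}_{loc}$ estimates apply, the Caffarelli-Guti\'errez H\"older estimate for the linearized Monge-Amp\`ere equation \cite{CG} gives $w\in C^\alpha_{loc}$ hence $\det D^2u\in C^\alpha_{loc}$, Caffarelli's $C^{2,\alpha}$ estimate \cite{Caf1} gives $u\in C^{2,\alpha}_{loc}$, and one more round of linear $L^p$ theory gives $w\in W^{2,p}_{loc}$ (no better, since $f(x,u)$ jumps across $\partial\Omega$) and then $u\in W^{4,p}_{loc}(B)$, exactly as in the proof of Lemma~3.3. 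This closes $\Sigma$ and yields the solution with the stated regularity; only the Lipschitz regularity of $\partial\Omega$ is used, since $\partial\Omega$ enters merely as the surface of discontinuity of $f(x,u)$.

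The hard part will be the boundary analysis near $\partial B$: making the penalty confinement $\sup_{B\setminus\Omega}|u-\varphi|\le 1-\sigma$ quantitative and uniform in $t$, and building barriers for $w$ in a neighborhood of $\partial B$ where $(U^{ij})$ degenerates as $\det D^2u\to\infty$ toward the boundary, together with the choice of function spaces in the continuity argument that accommodates this degeneracy. The interior steps are Section~3 read off on the sections $S_{\delta,u}$, and the openness step is standard once the concavity of $J$ is invoked.
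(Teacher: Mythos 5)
Your proposal departs substantially from the paper's route, and both of the steps you flag as ``the hard part'' are precisely where genuine gaps remain.

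The paper does not use the implicit function theorem plus a continuity argument. It instead proves (in the Appendix, Section~8) existence for the second boundary value problem with a \emph{given} bounded right-hand side, by degree theory: a homotopy $T_t$ between the linear problem ($w=1$, $U^{ij}w_{ij}=0$) and the target problem, for which the a~priori bounds of Lemma~8.1 make the degree well defined, and for which $T_0$ has a unique fixed point so $\deg(T_0,B_R,0)=1$. The advantage of degree theory over your IFT-based continuity is that one never has to prove invertibility of the linearization. Your openness argument -- ``$J$ concave implies the linearization is negative definite hence invertible between suitable weighted H\"older spaces'' -- is not a proof. Concavity gives at best injectivity / coercivity of the quadratic form $\int G''(d)\,(U^{kl}\eta_{kl})^2$, but bijectivity (surjectivity, and boundedness of the inverse) requires controlling the degeneracy of $(U^{ij})$ near $\partial B$, where $\det D^2u$ is not pinched. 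You concede this, but do not resolve it; the paper sidesteps it entirely.

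Given the Appendix, the actual content of Lemma~5.1 reduces, as the paper says, to showing that $|f(x,u)|\le C$ uniformly in $u$ solving (5.2) -- i.e.\ that the penalty confines $u-\varphi$ strictly inside $(-1,1)$ in $B\setminus\Omega$. Your sketch attributes this to a ``classical penalty argument -- a maximum-principle/barrier estimate.'' That is not how the bound is obtained, and it is not clear it can be. The paper's proof is not a pointwise maximum principle but a global integration-by-parts argument: for the lower bound on $f$, integrate $U^{ij}w_{ij}(u-l_0)$ over $\omega_0=\{u<l_0\}$, use that the boundary integrand $wU^{ij}(u-l_0)_i\gamma_j$ has a favorable sign, estimate $\int w\det D^2u$ via $\lim_{t\to 0} tF'(t)\le C_3$ (condition (c) on $G$), and compare with the contribution from a cone $\mathcal C_{x_0,\gamma,r,\kappa}\subset B\setminus\Omega$, whose existence is exactly what Lipschitz regularity of $\partial\Omega$ provides. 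The upper bound on $f$ is similar, via $\int_{\Omega_\delta}U^{ij}w_{ij}(u+\delta)$ and the Gauss curvature of $\partial\Omega_\delta$. Your proposal neither carries out this argument nor supplies an alternative; you also invoke the bound $\int_B G(\det D^2u)\,dx$, which presupposes an energy framework that a continuity-path solution $u_t$ need not belong to. Finally, appealing to Lemma~3.2 for the lower determinant bound would confine the proof to $n=2$; Lemma~5.1 is stated (and proved via Lemma~8.1) for all $n$, the lower bound on $\det D^2u$ (upper bound on $w$) there coming from the maximum principle applied to $\log w + w - \alpha h(u)$, not from the Legendre transform.
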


\begin{proof}
By the discussion of the second boundary problem in the Appendix,
it suffices to prove that for any solution $u$ to (5.2),
$|f(x, u)|\leq C$ for some constant $C$ independent of $u$.
Note that by our choice of $H$, a solution to (5.2)
is bounded from below.

\vskip 8pt

First, we prove an estimate of the determinant near the
boundary $\partial B$. By the definition of $H$
and the convexity of $u$,
$f$ is bounded from above near $\partial B$.
For any boundary point $x_0\in{\partial B}$, we suppose by a rotation of
axes that $x_0=(R, 0, ..., 0)$. There exists $\delta_0>0$ independent of $x_0$
such that $f$ is bounded from above in $B\cap\{x_1>R-\delta_0\}$.
Choose a linear function $l=ax_1+b$ such that
$l(x_0)<u(x_0)=0$ and $l>u$ on $x_1=R-\delta_0$.
Let
$$z=w+\log w-\beta\log(u-l),$$
where $\beta>0$ is to be determined below.
If $z$ attains its minimum at a boundary point on $\partial B$,
by the boundary condition $w=1$, $z\geq -C$ near $\partial B$.
If $z$ attains its minimum at a interior point $y_0\in{\{u>l\}}$,
we have, at $y_0$,
\begin{eqnarray}
0=z_i&=&w_i+\frac{w_i}{w}-\beta\frac{(u-l)_i}{u-l},\\
z_{ij}&=&w_{ij}+\frac{w_{ij}}{w}-\frac{w_iw_j}{w^2}
-\beta\frac{(u-l)_{ij}}{u-l}+\beta\frac{(u-l)_i(u-l)_j}{(u-l)^2}.
\end{eqnarray}
By (5.3),
$$\frac{w_i}{w}=\frac{\beta}{1+w}\frac{(u-l)_i}{u-l}.$$
It follows by (5.4) and equation (5.2)
$$0\leq u^{ij}z_{ij}=\frac{f}{d}+\frac{f}{dw}-\frac{\beta n}{u-l}+
\left[\beta-\frac{\beta^2}{(1+w)^2}\right]\frac{u^{ij}(u-l)_i(u-l)_j}{(u-l)^2}.$$
We may suppose that $w\leq 1$.
Choose $\beta$ large enough such that
$$\beta-\frac{\beta^2}{(1+w)^2}\leq 0.$$
So we have $w(y_0)\geq C$.
Therefore,  $\det D^2u\leq C$ near $\partial B$.

By the above determinant estimate near $\partial B$,
it follows that $|Du|$ is bounded near $\partial B$.
By the convexity of $u$,
$$\sup_B|Du|\leq C.$$

\vskip 8pt

Next, we prove that $f$ is bounded from below.
We note that by the Lipschitz continuity of $\partial \Omega$,
there exists positive constants
$r, \kappa$ such that for any $p\in{B\setminus\Omega}$,
there is a unit vector $\gamma$ such that the round cone
$\mathcal C_{p, \gamma, r, \kappa}\subset{B\setminus \Omega}$,
where
$$\mathcal C_{p, \gamma, r, \kappa}
:=\{x\in{\mathbb R^n} \ |\
|x-p|<r,\  \langle x-p, \gamma\rangle>\cos\kappa\}.$$
Assume that $M=-\inf _B f$ is attained at $x_0\in B$. If $x_0\in \Omega$,
then $M=\|f\|_{L^\infty(\Omega)}$. If $x_0\in{B\setminus\Omega}$,
we have $$M=2n[1+u(x_0)-\varphi(x_0)]^{-2n-1},$$
that is,
$$u(x_0)-\varphi(x_0)=\left(\frac{M}{2n}\right)^{-\frac{1}{2n+1}}-1.$$
Let $l_0$ be the tangent plane of $\varphi$ at $x_0$.
Since we have the gradient estimate of $u$,
there exists a uniform $\delta_0$ such that
$$0\leq 1+u(x)-\varphi(x)\leq 2\left(\frac{M}{2n}\right)^{-\frac{1}{2n+1}}$$
and
$$0\leq 1+u(x)-l_0(x)\leq 2\left(\frac{M}{2n}\right)^{-\frac{1}{2n+1}}$$
in the cone $\mathcal C_{x_0, \gamma,
\delta_0\left(\frac{M}{2n}\right)^{-\frac{1}{2n+1}}, \kappa}$.
Let $\omega_0=\{x\ |\  u(x)<l_0(x)\}$.
It is clear that when $M$ is sufficiently large,
$$\mathcal C_{x_0, \gamma, \delta_0\left(\frac{M}{2n}\right)^{-\frac{1}{2n+1}}, \kappa}\subset\omega_0.$$
Integrating by parts, we have
\begin{eqnarray*}
\int_{\omega_0}U^{ij}w_{ij}(u-l_0)\,dx
&=&-\int_{\omega_0}U^{ij}w_{j}(u-l_0)_i\,dx\nonumber\\
&=&-\int_{\partial\omega_0}wU^{ij}(u-l_0)_i\gamma_j\, dS
+\int_{\omega_0}w\det D^2u \, dx,
\end{eqnarray*}
where $dS$ is the volume element of $\partial \omega_0$.
$u-l_0$ vanishes on the boundary,
so $U^{ij}(u-l_0)_i\gamma_j\geq 0$.
The first integral on the right-hand side is negative.
Hence, we obtain
\beq
\int_{\omega_0}f(x, u)(u-l_0)\,dx
\leq \int_{\omega_0}w\det D^2u\, dx\leq C.
\eeq
Note that the last inequality follows by the condition
$\lim_{t\rightarrow 0}tF'(t)\leq C_3$
in the assumption (c) on $G$.
Estimating the integral in the cone, we have
\beq
\int_{\omega_0}f(x, u)(u-l_0)\,dx
\geq 2^{-2n-1}M
\cdot\left[1-2\left(\frac{M}{2n}\right)^{-\frac{1}{2n+1}}\right]
\cdot C \cdot \left(\frac{M}{2n}\right)^{-\frac{n}{2n+1}}.
\eeq
Therefore $M\leq C$ follows from (5.5), (5.6).
\vskip 8pt

Finally, we prove  that $f$ is bounded from above.
For any $\delta>0$, let
$$\Omega_\delta=\{u<-\delta\}\subset B$$
and $\gamma$ be the unit outward normal on $\partial \Omega_\delta$.
We have
\begin{eqnarray*}
\int_{\Omega_\delta}U^{ij}w_{ij}(u+\delta)\,dx
&=&-\int_{\Omega_\delta}U^{ij}w_{j}u_i\,dx\nonumber\\
&=&-\int_{\partial\Omega_\delta}wU^{ij}u_i\gamma_j\,dS
+\int_{\Omega_\delta}w\det D^2u\, dx\\
&\geq&-\int_{\partial\Omega_\delta}wU^{ij}u_i\gamma_j\,dS\\
&=&-\int_{\partial\Omega_\delta}
wU^{\gamma\gamma}u_\gamma \,dS\\
&=&-\int_{\partial\Omega_\delta}wu_\gamma^nK_s \, dS\\
&\geq & -C \sup_{\partial \Omega_\delta}w \sup_B |Du|^n ,
\end{eqnarray*}
where $dS$ is the volume element of $\partial \Omega_\delta$
and $K_s$ is the Gaussian curvature of $\partial \Omega_\delta$.
Letting $\delta\rightarrow 0$,
by $w=1$ on $\partial B$ and the gradient estimate,
$$\int_{B}f(x, u)u\, dx \geq -C.$$
By a similar argument as in the proof of lower bound,
if $u-\varphi$ is sufficiently close to $1$
at some point $x\in{B\setminus\Omega}$,
$u-\varphi$ is sufficiently close to $1$
nearby in $B\setminus \Omega$. This implies
the integral can be arbitrary large, which is a contradiction.
Hence, $f$ is bounded and the lemma follows.
\end{proof}

Now we prove that the maximizer  of $J(u)$ can be approximated
by smooth solutions to (2.6). This approximation
was proved for the affine Plateau problem in \cite{TW2} by a penalty method.
We will also use this method.

\begin{theo}
Let $\Omega$ and $\varphi$ be as in Theorem 2.6.
Suppose $\partial \Omega$ is Lipschitz continuous.
Then there exist a sequence of smooth solutions to equation (2.6)
converging locally uniformly to the maximizer $u$.
\end{theo}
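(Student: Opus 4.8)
The plan is to use a penalty method mirroring \cite{TW2}, \S6, combined with the solvability of the special second boundary value problem established in Lemma 5.2. First I would fix the ball $B=B_R(0)$ and the uniformly convex $\varphi\in C^2(\overline B)$ vanishing on $\partial B$ as in Lemma 5.2, and for each integer $m\ge 1$ introduce a penalized right-hand side $f_m(x,u)$ that agrees with $f$ on $\Omega$ and equals $mh(u-\varphi)$ (with $h=H'$ and $H$ as in (5.1)) on $B\setminus\Omega$; the parameter $m$ plays the role of a penalty weight forcing $u_m\to\varphi$ outside $\Omega$. By Lemma 5.2 (applied with $f_m$ in place of $f$, which satisfies the same hypotheses), there is a locally uniformly convex solution $u_m\in W^{4,p}_{loc}(B)\cap C^{0,1}(\overline B)$ to the second boundary value problem $U^{ij}(w_m)_{ij}=f_m(x,u_m)$ in $B$, $w_m=G'(d_m)$, $u_m=\varphi$ and $w_m=1$ on $\partial B$. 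In particular each $u_m$ is a smooth (by elliptic regularity, since $f$ is smooth) locally uniformly convex solution of the unpenalized equation (2.6) in $\Omega$ itself.

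The next step is to extract uniform estimates in $m$. From the barrier argument in the proof of Lemma 5.2 one gets $\sup_B|Du_m|\le C$ and hence $\sup_B|u_m|\le C$ uniformly in $m$; by convexity the $u_m$ are therefore uniformly Lipschitz and, after passing to a subsequence, converge locally uniformly in $B$ to a convex function $\bar u$. The key point is to identify $\bar u$ with the maximizer $u$ of $J$ on $\overline S[\varphi,\Omega]$. For this I would compare energies: testing the variational characterization and integrating the equation against $u_m-\varphi$ (integration by parts exactly as in the displayed computations leading to (5.5)–(5.7) in the proof of Lemma 5.2), one shows that the penalty term $\int_{B\setminus\Omega}H(u_m-\varphi)\,dx$ stays bounded, which forces $u_m\to\varphi$ in measure on $B\setminus\Omega$ and hence $\bar u=\varphi$ on $\partial\Omega$, i.e. $\bar u\in\overline S[\varphi,\Omega]$ after restriction to $\Omega$. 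Then, using that $u_m$ is a critical point (indeed a local maximizer, by the concavity of $A$) of the penalized functional $J(v)-\int_{B\setminus\Omega}H(v-\varphi)\,dx$ over an appropriate admissible class, one gets $J(\bar u)\ge J(v)$ for every competitor $v\in\overline S[\varphi,\Omega]$ by a lower-semicontinuity/comparison argument: the upper semicontinuity of $A$ (Lemma 2.4) controls $\limsup A(u_m)$ from above, and testing against a fixed smooth competitor controls it from below. By the uniqueness in Theorem 2.6, $\bar u=u$.

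I expect the main obstacle to be precisely this identification step — showing the penalized minimizers converge to the \emph{maximizer} of $J$ rather than to some other critical point, and that the limit actually attains the boundary value $\varphi$ on $\partial\Omega$ rather than developing a gap there. This requires careful two-sided control of the penalty energy $\int_{B\setminus\Omega}H(u_m-\varphi)$: the upper bound comes from plugging $\varphi$ itself (extended suitably) as a competitor into the penalized functional and using that $A(\varphi)$ is finite, while the lower bound/exclusion of concentration near $\partial\Omega$ reuses the cone-and-integration-by-parts estimate from Lemma 5.2 that already showed $f_m$ stays bounded. Once $\bar u=u$ is established, the theorem follows: the restrictions $u_m|_\Omega$ are smooth locally uniformly convex solutions of (2.6) converging locally uniformly to $u$, and a further application of the interior estimates of Lemma 3.4 (valid once a modulus of convexity is known) upgrades the convergence as needed. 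The remaining routine points — smoothness of each $u_m$ from $W^{4,p}_{loc}$ plus Schauder theory, and the convexity of the limit — I would dispatch briefly by citing \cite{GT} and the stability of convexity under locally uniform limits.
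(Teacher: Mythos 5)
Your overall strategy matches the paper's: penalize outside $\Omega$, solve the second boundary value problem on a large ball $B_R$ via Lemma 5.1, extract a locally uniform limit $\bar u$, and identify $\bar u$ with the maximizer by uniqueness. However, there are two concrete gaps in the sketch.

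First, the penalty parametrization matters. You take $f_m = m\,h(u-\varphi)$ on $B\setminus\Omega$, i.e.\ you scale only the strength of the forcing. The paper instead sets $H_j(t)=H(4^j t)$ and uses $H_j'$, which compresses the domain of finiteness of the penalty: since $H(s)\to\infty$ as $s\to\pm1$, any solution automatically satisfies the \emph{pointwise} bound $|u_j-\varphi|\le 4^{-j}$ on $B_R\setminus\Omega$. This is used essentially (it is exactly (5.7), feeding into (5.10) and the construction below). Your version only lets you conclude $\int H(u_m-\varphi)\le C$ and hence $u_m\to\varphi$ in measure, which is not strong enough to run the later comparison of energies on the thin shell $D_j=B_{R-1/(2j)}\setminus B_{R-2/j}$ nor to show cleanly that $\bar u=\varphi$ on $B\setminus\Omega$.

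Second, and more seriously, the identification step is where your sketch is thinnest, and your proposed fix does not quite work. You say ``testing against a fixed smooth competitor controls it from below'' and suggest plugging in $\varphi$. But the admissible class for the penalized problem on $B_R$ is $\overline S[u_j,B_R]$ (boundary data and gradient image set determined by $u_j$), not $\overline S[\varphi,\Omega]$, and in any case comparing with $\varphi$ only bounds $J$ from below by $J(\varphi)$, which says nothing about the actual maximizer $u$. What the paper has to do is construct, for each $j$, an explicit competitor $v_j\in\overline S[u_j,B_R]$ that (a) agrees with $u_j$ near $\partial B_R$ so that it lies in the correct class, (b) equals $\varphi$ on $B_{R-2/j}\setminus\Omega$, and (c) restricts to the maximizer $u$ on $\Omega$; this $v_j$ is built as a supremum of linear supports obeying the two constraints. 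Controlling the error on the interpolation shell $D_j$ via (2.9) and upper semicontinuity is then what yields $J(u)\le J(\bar u)+3\epsilon$. Without this extension construction the inequality $J(\bar u)\ge J(v)$ for arbitrary $v\in\overline S[\varphi,\Omega]$ does not follow, and this is precisely the obstacle you flagged but did not resolve.
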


\begin{proof}
The proof for this approximation in \cite{TW2} is very complicated,
so we use a simplified proof in \cite{TW5}.

Let $B=B_R(0)$ be a large ball such that $\Omega\subset B_R$.
By assumption, $\varphi$ is defined in a neighborhood of
$\Omega$, so we can extend $u$ to $B$ such that
$\varphi$ is convex in $B$, $\varphi\in{C^{0, 1}(\overline B)}$
and $\varphi$ is constant on $\partial B$.
Adding $(|x|-R+\frac{1}{2})^2_+$ to $\varphi$, where
$$(|x|-R+\frac{1}{2})_+=\max\{|x|-R+\frac{1}{2}, 0\},$$
we assume that $\varphi$ is uniformly convex in
$\{x\in{\mathbb R^n}\ | \    R-\frac{1}{2}<|x|<R\}$.
Consider the second boundary value problem (5.2) with
\begin{eqnarray*}
f_{j}(x, u)&=&
\begin{cases}
f & \ \text{in}\  \Omega,\\
H_j'(u-\varphi)& \ \text{in}\  B_R\setminus \Omega,
\end{cases}
\end{eqnarray*}
where $H_j(t)=H(4^j t)$ and $H$ is defined by (5.1).
By Lemma 5.1, there is a solution $u_{j}$ satisfying
\beq
|u_{j}-\varphi|\leq 4^{-j}, \ x\in{B_R\setminus\Omega}.
\eeq
By the convexity, $u_{j}$ sub-converges
to a convex function $\bar u$ in $B_R$ as $j\rightarrow \infty$.
Note that $\bar u=\varphi$ in $B_R\setminus \Omega$.
Hence, $\bar u\in{\overline S[\varphi, \Omega]}$
when restricted in $\Omega$.
We claim that $\bar u$ is the maximizer.


Let $v_j$ be an extension of $u$, given by
$$v_{j}=\sup\{l \ |\  l\in{\Phi_{j}}\},$$
where $\Phi_{j}$ is the set of
linear functions in $B_R$ satisfying
\beqs
l(x)\leq \varphi(x)&& \text{when} \  |x|=R \ \text{or} \ |x|\leq R-\frac{1}{j},
\  \text{and}\   \\
 l(x)\leq u_j(x) &&\text{when} \  R-\frac{1}{j}<|x|<R.
\eeqs
By our assumption, $\varphi$ is uniformly convex
in $B_R\setminus B_{\frac{R}{2}}$. By (5.7),
$|u_{j}-\varphi|\leq 4^{-j}=o(j^{-2})$,
$x\in{B_R\setminus\Omega}$.
So we have
\begin{eqnarray}
v_{j}=u_{ j} &&\ \text{in}\   B_R\setminus B_{R-\frac{1}{2j}},\\
v_{j}=\varphi &&\ \text{in}\ B_{R-\frac{2}{j}}\setminus \Omega, \\
|v_{j}-\varphi|\leq  |u_{j}-\varphi|&& \  \text{in} \
B_{R-\frac{1}{2j}}\setminus B_{R-\frac{2}{j}}:=D_j.
\end{eqnarray}

\vskip 8pt

Now we consider the functional
$$J_{ j}(v)=\int_{B_R} G(\det\partial^2v)\,dx-\int_{\Omega}fv\,dx
-\int_{B_R\setminus\Omega}H_j(v-\varphi)\,dx.$$
Subtracting $G$ by the constant $G(0)$,
we may assume that $G(0)=0$.
Note that $u_{j}$ is the maximizer of $J_{j}$
in $\overline S[u_{j}, B_R]$ and $v_{j}\in{\overline S[u_{j}, B_R]}$.
So we have
$$J_{j}(v_{j})\leq  J_{j}(u_{j}).$$
In the following, we denote by $J_j(v, E)$ the functional $J_j$
over the domain $E$.
By (5.8), we have
\beq
J_{j}(v_{j}, B_{R-\frac{1}{2j}})
\leq  J_{j}(u_{j}, B_{R-\frac{1}{2j}}).
\eeq
By (5.9), (5.10), we obtain
\beq
-\int_{B_{R-\frac{1}{2j}}\setminus \Omega}H_j(u_j-\varphi)\,dx
\leq-\int_{B_{R-\frac{1}{2j}}\setminus \Omega}H_j(v_j-\varphi)\,dx.
\eeq
For any $\epsilon>0$,
by the upper semi-continuity of the functional $A(u)$,
\beqn
\int_{B_{R-\frac{2}{j}}\setminus \Omega}G(\det\partial^2u_j)\,dx
&\leq& \int_{B_{R-\frac{2}{j}}\setminus \Omega}
G(\det\partial^2\varphi)\,dx+\epsilon\nonumber\\
&=&\int_{B_{R-\frac{2}{j}}\setminus \Omega}G(\det\partial^2v_j)\,dx+\epsilon
\eeqn
provided $j$ is large enough. In addition, by (2.9),
\beq
0\leq \int_{D_j} G(\det \partial^2v)\,dx
\leq |D_j|G(|D_j|^{-1}\mu[v](D_j))\rightarrow 0
\eeq
as $j\rightarrow \infty$, where $v=u_{j}$ or $v_{j}$.

Hence, by (5.11)-(5.14) and the upper semi-continuity of the functional $A(u)$,
$$J(u)=J(v_{j})\leq  J(u_{j})+2\epsilon\leq J(\bar u)+3\epsilon.$$
provided $j$ is large enough.
By taking $\epsilon\rightarrow 0$,
this implies $\bar u$ is the maximizer.
By the uniqueness of maximizers in Theorem 2.6,
we obtain $\bar u=u$.
\end{proof}

\begin{rem}
We remark that the above approximation does not holds for
the maximizer of the functional $J_0$.
The reason is that since $\log d$ is not bounded from below,
we do not have the property
$$\left|\int_E \log\det\partial^2 u\, dx\right|\longrightarrow 0, $$
as $|E|\rightarrow 0$.
This is why we introduce the function $G$
and consider the modified functional $J(u)$.
\end{rem}

\vskip 10pt

By Theorem 5.2, for each $k$,
there exists a smooth solutions $u^{(k)}_{j}$ to
\beq
U^{ij}w_{ij}=f,
\eeq
where
\beq
w=G'_k(\det D^2u),
\eeq
which converges locally uniformly to the maximizer $u^{(k)}$ of (2.13).
Then we have
\beq
u^{(k)}_{j}\longrightarrow u_0,  \    j, k\rightarrow \infty.
\eeq
As we explained in Section 3, if $u_0$ is strictly convex,
the interior a priori estimates of $u^{(k)}_j$ will
be independent of $k$ and $j$. Hence, by taking limit, we have the
interior regularity of $u_0$ in $\Omega$.
Moreover,  by the construction of
$G_k$, $u_0$ will be a solution to Abreu's equation (1.1).
Therefore we have

\begin{theo}
Let $u_0$ be as above.
Assume that $f\in{C^{\infty}(\Omega)}$.
Then if $u_0$ is a strictly convex function,
$u_0\in{C^{\infty}(\Omega)}$ and solves (1.7).
\end{theo}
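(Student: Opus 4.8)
The plan is to exploit the three convergence facts already established: by Theorem 5.2 (applied to each $G_k$) the sequence $u^{(k)}_j$ converges locally uniformly to $u^{(k)}$ as $j\to\infty$, and by the discussion after Theorem 2.6 the maximizers $u^{(k)}$ converge locally uniformly to $u_0$; combining these (passing to a diagonal subsequence) gives $u^{(k_m)}_{j_m}\to u_0$ locally uniformly. The point is that each $u^{(k)}_j$ is a smooth, locally uniformly convex solution of the Euler equation (2.6) with $G=G_k$, so Lemma 3.3 applies to it, and the resulting estimates are independent of the parameter $\delta=\delta_k$.

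First I would fix $\Omega'\subset\subset\Omega$ and argue that, because $u_0$ is assumed strictly convex, the modulus of convexity $h_{u_0,\Omega''}$ on a slightly larger $\Omega''$ with $\Omega'\subset\subset\Omega''\subset\subset\Omega$ is controlled from below. Since $u^{(k_m)}_{j_m}\to u_0$ locally uniformly and strict convexity of a convex function is stable under uniform limits on compact subsets (the sections $S_{\delta,u}$ vary upper-semicontinuously), for $m$ large the functions $u^{(k_m)}_{j_m}$ have a uniform modulus of convexity on $\Omega''$, i.e. $h_{u^{(k_m)}_{j_m},\Omega''}\geq h_0$ for some $h_0$ depending only on $u_0$ and $\Omega'',\Omega'$. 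Plugging this uniform modulus of convexity, together with $\sup|f|$ on $\Omega''$ and $\mathrm{dist}(\Omega',\partial\Omega'')$, into Lemma 3.3(ii) (with $f\in C^\infty\subset C^\alpha$) yields
$$\|u^{(k_m)}_{j_m}\|_{C^{4,\alpha}(\Omega')}\leq C,$$
with $C$ independent of $m$ (crucially, independent of $\delta_{k_m}$, as Remark 3.5(i) emphasizes). By Arzelà–Ascoli and a diagonal argument over an exhaustion of $\Omega$, $u^{(k_m)}_{j_m}\to u_0$ in $C^{4,\alpha'}_{\mathrm{loc}}(\Omega)$ for $\alpha'<\alpha$, hence $u_0\in C^{4,\alpha}_{\mathrm{loc}}(\Omega)$; bootstrapping via standard elliptic regularity (\cite{GT}) upgrades this to $u_0\in C^\infty(\Omega)$, and $u_0$ is locally uniformly convex since $\det D^2 u_0$ is bounded above and below on compact subsets by the two inequalities in the proof of Lemma 3.3.

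Next I would identify the limit equation. Each $u^{(k_m)}_{j_m}$ satisfies $U^{ij}w_{ij}=f$ with $w=G'_{k_m}(\det D^2 u^{(k_m)}_{j_m})$. On any $\Omega'\subset\subset\Omega$ the determinant $d_m:=\det D^2 u^{(k_m)}_{j_m}$ is bounded below by a positive constant depending only on $u_0,\Omega'$ (Lemma 3.2 via the Legendre transform, as in the proof of Lemma 3.3), independent of $m$. Since $\delta_{k_m}\to 0$, for $m$ large we have $\delta_{k_m}$ below that constant, so property (a) of $G_{k_m}$ forces $G_{k_m}(d_m)=\log d_m$ and $G'_{k_m}(d_m)=d_m^{-1}$ on $\Omega'$; thus $w=[\det D^2 u^{(k_m)}_{j_m}]^{-1}$ there. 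Passing to the limit in $C^{2}$ (which the $C^{4,\alpha}$ convergence supplies), $u_0$ solves $U^{ij}w_{ij}=f$ with $w=[\det D^2 u_0]^{-1}$ on $\Omega'$, i.e. Abreu's equation (1.1) on all of $\Omega$. Finally, since $u_0\in\overline S[\varphi,\Omega]$ is the locally uniform limit of the maximizers $u^{(k)}$ and $A_0$ is upper semicontinuous with $A_k\to A_0$ monotonically (by property (a), $G_k\uparrow\log$... more precisely $G_k=\log$ off $[0,\delta_k]$), a short argument shows $u_0$ maximizes $J_0$ over $\overline S[\varphi,\Omega]$; being smooth and locally uniformly convex it lies in $S[\varphi,\Omega]$ and hence solves (1.7), with uniqueness from the concavity of $A_0$ together with uniqueness for the Monge–Ampère Dirichlet problem.

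The main obstacle is the first step: transferring a quantitative lower bound on the modulus of convexity from the limit $u_0$ to the approximants $u^{(k_m)}_{j_m}$ uniformly in $m$. This requires that strict convexity, quantified through the sections $S_{\delta,u}$, be stable under locally uniform convergence — true, but one must be careful that the relevant sections stay compactly inside $\Omega$, which is where the hypothesis that $u_0$ is strictly convex (rather than merely convex) is used decisively; without it Lemma 3.3 gives nothing. This is precisely why Sections 6 and 7 of the paper are devoted to proving strict convexity of $u_0$ in dimension $2$.
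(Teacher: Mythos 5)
Your proposal follows essentially the same route as the paper: take the doubly-indexed approximating solutions from Theorem 5.2, use the strict convexity of $u_0$ to transfer a uniform modulus of convexity to the approximants so that the interior estimates (Lemma 3.4, which you mislabel as Lemma 3.3, and its Remark 3.3) apply uniformly in $k$ and $j$, pass to the limit, and use property (a) of $G_k$ together with the interior lower determinant bound to see that the limit equation is Abreu's. The only slip is cosmetic — $G_k\geq\log$ with $G_k\downarrow\log$ pointwise, not $G_k\uparrow\log$ — and the final ``short argument'' that $u_0$ maximizes $J_0$ is left as a sketch, but the paper itself is equally terse there, so your proof matches both in content and level of detail.
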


In the last two sections, we will show the strict convexity of $u_0$.

\vskip 10pt

\section{Strict convexity I}

We prove the strict convexity of $u_0$ in dimension $2$.
Let $\mathcal M_0$ be the graph of $u_0$.
If $u_0$ is not strictly convex, $\mathcal M_{0}$
contains a line segment. Let $l(x)$ be a tangent function of $u_0$
at the segment and denote by
$$\mathcal C=\{x\in{\Omega}\ |  \   u_0(x)=l(x)\}$$
the contact set.

\vskip 5pt

We first recall the definition of extreme points.
Let $\Om$ be a bounded convex domain in $\R^n$, $n\ge 2$.
A boundary point $x\in\pom$ is an {\it extreme point} of
$\Om$ if there is a hyperplane $H$ such that $\{x\}=H\cap\pom$, namely
$x$ is the unique point in $H\cap\pom$.

\vskip 5pt

According to the distribution of
extreme points of $\mathcal C$, we consider two cases as follows.

Case (a) $\mathcal C$ has an extreme point $x_0$ which is
an interior point of $\Omega$.

Case (b) All extreme points of $\mathcal C$ lie on $\partial\Omega$.

In this section, we exclude Case (a). 

\begin{prop}
$\mathcal C$ contains no extreme points in the interior of $\Omega$.
\end{prop}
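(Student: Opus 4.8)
The plan is to argue by contradiction following the strategy used for the affine maximal surface equation in \cite{TW1, TW2}. Suppose $\mathcal{C}$ has an extreme point $x_0$ in the interior of $\Omega$. Since $x_0$ is extreme, there is a line, say along $e_n$, meeting $\mathcal{C}$ only at $x_0$; after subtracting the tangent function $l$ we may assume $u_0 \geq 0$, $u_0 = 0$ on $\mathcal{C}$, and $u_0(x_0) = 0$ with $x_0 = 0$. The idea is to localize near $x_0$ by looking at the sections $S_{h, u_0}(0)$, which shrink towards the contact set as $h \to 0$ but, because $0$ is an extreme point, become very thin in the $e_n$-direction. One then renormalizes: apply an affine transformation $A_h$ (unimodular up to a scaling factor) that maps $S_{h,u_0}(0)$ to a domain of bounded eccentricity, and set $\tilde{u}_h = (\text{const}) \cdot u_0 \circ A_h^{-1}$. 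The thinness of the sections forces the renormalized domains to degenerate, which is what will ultimately contradict the a priori estimates.

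The key steps, in order, are as follows. First, establish that along the renormalization the rescaled functions $\tilde u_h$ remain solutions of an equation of the same type; here one must track how equation (2.6) transforms. Since (2.6) is \emph{not} invariant under transformations of $\mathbb{R}^{n+1}$ but the needed affine maps $A_h$ act only on the $x$-variables in $\mathbb{R}^n$, Remark 2.4 applies: $\tilde u_h$ solves the same equation with $G$ replaced by $\tilde G(\tilde d) = G(|A_h|^2 \tilde d)$, which again satisfies (a), (b), (c) with the \emph{same} constants $C_1, C_2, C_3$ (only $\delta$ changes to $|A_h|^{-2}\delta$). This is exactly why the $\delta$-independence emphasized in Remark 3.4(i) matters. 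Second, use the determinant estimate Lemma 3.1 (upper bound) together with the lower bound coming from Lemma 3.2 via the Legendre transform --- both $\delta$-independent --- to control $\det D^2 \tilde u_h$ on compact subsets, and hence (by Lemma 3.3) obtain uniform $C^{2,\alpha}$ and higher interior estimates for $\tilde u_h$ on the renormalized sections. Third, derive the contradiction: the uniform estimates imply the limit of the $\tilde u_h$ is a smooth strictly convex function on a nondegenerate limiting domain, but the construction forces the limiting section to be degenerate (collapsed in the $e_n$-direction, since $0$ was extreme), i.e. the limiting function would have a line segment in its graph through the center --- contradicting strict convexity that the estimates guarantee. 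Equivalently, one shows $\det D^2 \tilde u_h$ cannot stay bounded above and below on a domain that is collapsing, contradicting the two-sided determinant bounds.

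The main obstacle, and the step requiring the most care, is the geometric renormalization argument: showing that when $0$ is an extreme point of the contact set $\mathcal C$, the sections $S_{h,u_0}(0)$ genuinely degenerate under the John-ellipsoid normalization, and that this degeneration is incompatible with the uniform a priori estimates. One must rule out the possibility that the sections stay comparable to balls (which would happen if $0$ were, say, in the relative interior of $\mathcal C$ rather than an extreme point), and control the gradient of $u_0$ on the sections so that the image $Du_0(S_{h,u_0}(0))$ stays inside $D\varphi(\overline\Omega)$, keeping $\tilde u_h$ admissible. A secondary technical point is that $u_0$ itself is only known a priori to be a generalized solution (the maximizer), so one must work with the smooth approximants $u^{(k)}_j$ from Theorem 5.2, carry out the renormalization at that level with all estimates uniform in $k, j$, and pass to the limit --- the uniformity in $\delta$ (hence in $k$) being precisely what makes this legitimate. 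The remaining computations (the transformation formulas for the sections, the John-ellipsoid bookkeeping) are routine once this framework is set up.
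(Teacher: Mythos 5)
Your overall strategy (contradiction, rescaling sections around the extreme point, using the $\delta$-independent estimates to control the rescaled solutions) is the right one, but there are two places where the argument as you have sketched it would actually break down.

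First, the claim that the needed affine renormalizations ``act only on the $x$-variables in $\mathbb{R}^n$,'' so that Remark~2.3 suffices, is not correct. When you rescale the sections $\Omega_\epsilon = \{u_0 < l_\epsilon\}$ around the extreme point and pass to the limit, the limiting convex surface $\tilde{\mathcal M}_0$ in $\mathbb{R}^3$ contains a segment transversal to the original graphing direction --- concretely, it contains both a segment $\{(0,0,y_3): 0\le y_3\le 1\}$ (vertical) and a segment $\{(y_1,0,0): 0\le y_1\le 1\}$ in the base. It is therefore not a graph of a function of $(y_1,y_2)$ near the origin. To put it back in graphical form and apply a determinant estimate one must rotate coordinates \emph{in $\mathbb{R}^{n+1}$}, which is precisely why Section~4 of the paper exists: Lemma~4.1 computes the transformed equation and Lemma~4.3 gives a determinant upper bound (uniform in $\delta$) for the rotated function $\hat v$. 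Without that rotation machinery you have no equation and no estimate in the limit, because (2.6) changes under $\mathbb{R}^{n+1}$-rotations in an essential, nontrivial way.

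Second, the appeal to Lemma~3.3 to get ``uniform $C^{2,\alpha}$ and higher interior estimates for $\tilde u_h$'' is circular: the constant in Lemma~3.3 depends explicitly on the modulus of convexity of the solution, which is exactly what you are trying to establish. The paper's proof does not use Lemma~3.3 (nor Lemma~3.2) here. After the rotation, it uses only the determinant \emph{upper} bound from Lemma~4.3 together with an elementary geometric comparison: $|N_v(S_{\delta,v}(0))|\gtrsim \delta/\kappa_\delta$ from the cone comparison below, while $|N_v(S_{\delta,v}(0))|\lesssim \delta\kappa_\delta$ from the determinant bound; combining these forces $\kappa_\delta\geq C>0$ for all small $\delta$, contradicting strict convexity of $v$ at the origin (Case~I), with a similar argument on tilted sections in Case~II. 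The logic is: a bounded Monge--Amp\`ere measure plus the presence of a ``ridge'' in the graph contradicts the geometry of normal images --- no higher-order regularity or strict convexity of the approximants is invoked.

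So you would need to (i) replace ``only $\mathbb{R}^n$ transformations'' with the explicit $\mathbb{R}^{n+1}$ rotation and bring in Lemma~4.3, and (ii) replace the circular regularity/strict-convexity step with the direct normal-mapping volume comparison.
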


\begin{proof}

We prove this proposition by contradiction arguments as in \cite{TW1}. 
By (5.17), we can choose a sequence of smooth functions $u_k=u_{j_k}^{(k)}$
converging to $u_0$ such that $u_k$ is the solution to (5.15).
Let $\mathcal M_k$ be the graph of $u_k$.
Then $\mathcal M_k$ converges
in Hausdorff distance to $\mathcal  M_0$.
There is no loss of generality in assuming that $l(x)=0$,
$x_0$ is the origin and the segment
$\{(x_1, 0)\  |  \  0\leq x_1\leq 1 \}\subset \mathcal C$.

For any $\epsilon>0$, we consider a linear function
$$l_\epsilon=-\epsilon x_1+\epsilon$$ and a subdomain
$\Omega_\epsilon=\{u< l_\epsilon\}$.
Let $T_\epsilon$ be the coordinates transformation that normalizes
$\Omega_\epsilon$. Define
\beq
u_\epsilon(y)=\frac{1}{\epsilon}u(x), \
u_{k,\epsilon}=\frac{1}{\epsilon}u_k(x),\ \  y\in{\tilde \Omega_\epsilon}
\eeq
where $y=T_\epsilon x$ and
$\tilde \Omega_\epsilon=T_\epsilon(\Omega_\epsilon)$.
After this transformation, we have the following observations:

(i) By Remark 2.2, $u_{k,\epsilon}$ satisfies the equation (2.6) with
$$G=G_{k,\epsilon}(d)=G_k(\epsilon|T_\epsilon|^2d), \  \
\delta=\delta_{k, \epsilon}=\frac{\delta_k}{\epsilon |T_\epsilon|^2}$$
and the right hand term $\epsilon f$.
Note that $|T_\epsilon|\geq C\epsilon^{-1}$, so
$\delta_{k, \epsilon}\leq C\delta_k\rightarrow 0$
for a constant $C$ independent of $\epsilon$.

(ii) Denote by $\mathcal M_\epsilon$, $\mathcal M_{k,\epsilon}$
the graphs of $u_\epsilon$, $u_{k, \epsilon}$, respectively.
Taking $k\rightarrow \infty$, it is clear that
$u_{k, \epsilon}\rightarrow u_\epsilon$
and $\mathcal M_{k,\epsilon}$
converges in Hausdorff distance to
$\mathcal M_\epsilon$. Then taking $\epsilon\rightarrow 0$,
we have that the domains $\tilde \Omega_\epsilon$
sub-converges to a normalized domain $\tilde\Omega$ and
$u_\epsilon$ sub-converges to a convex function $\tilde u$
defined in $\tilde\Omega$. We also have $\mathcal M_\epsilon$
sub-converges in Hausdorff distance
to a convex surface $\tilde{ \mathcal M}_0\in{\mathbb R^3}$.

(iii) The convex surface $\tilde{ \mathcal M}_0$  satisfies
\beq
\tilde{\mathcal M}_0\subset \{y_1\geq 0\}\cap\{y_3\geq 0\}
\eeq
and $\tilde{\mathcal M}_0$ contains two segments
\beq
\{(0, 0, y_3)\ | \   0 \leq y_3\leq 1\}, \
\{(y_1, 0, 0)\  | \  0\leq y_1\leq 1\}.
\eeq

Hence,  by (i), (ii), (iii),
we can suppose that there is a solution
$\tilde u_k$ to
\beq
U^{ij}w_{ij}=\epsilon_k f  \  \ \text{in}\ \tilde \Omega_k,
\eeq
where
\beq
w=\tilde G'_{\tilde \delta_k}(\det D^2u),
\eeq
and $\tilde\delta_k, \epsilon_k\rightarrow 0$,
such that the normalized domain $\tilde \Omega_k$
converges to $\tilde\Omega$,
$\tilde u_k$ converges to $\tilde u$
and the graph of $\tilde u_k$, denoted by
$\tilde {\mathcal M}_k$ converges in
Hausdorff distance to $\tilde{\mathcal M}_0$.
It is clear that in $y$-coordinates, $\tilde{\mathcal M}_0$
is not a graph of a function near the origin, so
we need to rotate the $\mathbb R^3$ coordinates.
Since the equation (2.6) is invariant under
unimodular transformation,
we may suppose
$$\tilde\Omega\subset \{y_1\geq 0\}.$$
Adding a linear function to $\tilde u$, $\tilde u_k$,
we replace (6.2), (6.3) by
\beq
\tilde{\mathcal M}_0\subset \{y_1\geq 0\}\cap\{y_3\geq -y_1\}
\eeq
and $\tilde{\mathcal M}_0$ contains two segments
\beq
\{(0, 0, t)\ | \   0 \leq t\leq 1\}, \
\{(t, 0, -t)\  | \  0\leq t\leq 1\}.
\eeq
Let
$$L=\{(y_1, y_2, y_3)\in{\tilde {\mathcal M}_0} \  |  \
y_1=y_3=0 \}.$$
$L$ must be a single point (Case I) or a segment (Case II).
In Case II, we may also suppose that $0$ is an end point of
the segment which is
$$\{(0, t, 0)\ | \  -1<t<0\}.$$
Later, we will discuss the two cases separately.

\vskip 8pt

Now we make the rotation
$$z_1=-y_3,  \  \  z_2=y_2, \ \ z_3=y_1$$
such that $\tilde{\mathcal M}_0$ can be
represented by a convex $v$ near the origin.
By convexity, $\tilde{\mathcal M}_k$ can also be represented by
$z_{3}=v^{(k)}(z_1, z_2)$ near $p_0$,  respectively.
$v^{(k)}$ is a solution of the equation given in Lemma 4.1 near the origin.
As we know that  $\tilde{\mathcal M}_k$ converges
in Hausdorff distance to $\tilde{\mathcal M}_0$,
in new coordinates,  $v^{(k)}$ converges locally uniformly to $v$.
It is clear that
\begin{eqnarray}
&&v(0)=0, \ \ v\geq 0,\  \text{when}\  -1\leq z_1\leq 0 \ \text{and}\  \nonumber\\
&&v\geq z_1,\ \text{when} \ 0\leq z_1\leq 1\nonumber
\end{eqnarray}
and the two line segments
$$\{(t, 0, 0)\ | \ -1\leq t\leq 0\}, \quad \{(t, 0, t)\ |\ 0\leq t\leq 1\}$$
lie on the graph of $v$.

As in (4.9),   let $\hat v^{(k)}=v^{(k)}-\frac{1}{2}z_1$
and $\hat v=v-\frac{1}{2}z_1$.
In the following computation we omit the hat for simplicity. Then
\beq
v\geq \frac{1}{2}|z_1|\quad \text{and} \quad
v(z_1, 0)=\frac{1}{2}|z_1|.
\eeq
Let $$\tilde{\mathcal{C}}=\{z\ | \ v(z)=0\}.$$
Observe that
$$L=\{(z_1, z_2, 0)\ | \  (z_1, z_2)\in{ \tilde{\mathcal{C}}}\}$$
in $z$-coordinates.

\vskip 10pt

\noindent {\it Case I}.  In this case,  $v$ is strictly convex at $(0, 0)$.
The strict convexity implies that
$Dv$ is bounded on $S_{h, v}(0)$ for small $h>0$.
Hence, by locally uniform convergence,
$Dv^{(k)}$ are uniformly bounded on $S_{\frac{h}{2}, v^{(k)}}(0)$.
By Lemma 4.3, we have the determinant estimate
\beq
\det D^2v^{(k)}\leq C
\eeq
near the origin.

For $\delta\leq \frac{h}{2}$, by (6.8),
$S_{\delta,v}(0)\subset\{-\frac{\delta}{2} \leq y_1\leq \frac{\delta}{2}\}$
 and $(\pm\frac{\delta}{2}, 0)\in{\partial S_{\delta,v}(0)}$.
In the $z_2$ direction, we define
$$\kappa_\delta=\sup\{|z_2|\ |  \  (z_1, z_2)\in{S_{\delta,v}(0)}\}.$$
By comparing the images of $S_{\delta,v}(0)$ under normal mapping of $v$ and
the cone with bottom at $\partial S_{\delta,v}(0)$ and
top at the origin,
$$|N_v(S_{\delta,v}(0))|\geq C\frac{\delta}{\kappa_\delta}.$$
By the lower semi-continuity of normal mapping,
$$N_v(S_{\delta,v}(0))\subseteq
{\lim\inf}_{k\rightarrow \infty}N_{v^k}(S_{\delta,v}(0)),$$
then
$$N_v(S_{\delta,v}(0))
=N_v(S_{\delta,v}(0))
\subseteq {\lim\inf}_{k\rightarrow \infty}
N_{v^{(k)}}(S_{\delta,v}(0)).$$
By (6.9),
\beqn
|N_v(S(\delta))|
&\leq&{\lim\inf}_{k\rightarrow \infty}|N_{v^{(k)}}(S_{\delta,v}(0))|\nonumber\\
&=& {\lim\inf}_{k\rightarrow \infty}
\int_{S_{\delta,v}(0)}\det D^2v^{(k)}\,dz\nonumber\\
&\leq& C |S_{\delta,v}(0)|\nonumber\\
&\leq& C \delta \kappa_\delta.
\eeqn
Hence, $\kappa_\delta\geq C>0$, where $C$ is independent of $\delta$.
Again by the strict convexity,
$\kappa_\delta\rightarrow 0$ as $\delta\rightarrow 0$.
The contradiction follows.

\vskip 10pt

\noindent {\it Case II}.
In this case,
$$\tilde{\mathcal C}= \{(0, z_2)\ | \  -1<z_2<0\}.$$
We define the following linear function:
$$l_\epsilon(z)=\delta_\epsilon z_2+\epsilon$$ and
$\omega_\epsilon=\{z\ | \ v(z)\leq l_\epsilon\}$,
where $\delta_\epsilon$ is chosen
such that
$$v(0,\frac{\epsilon}{\delta_\epsilon})
=l(0,\frac{\epsilon}{\delta_\epsilon})=2\epsilon,\
v(0,-\frac{\epsilon}{\delta_\epsilon})
=l(0,-\frac{\epsilon}{\delta_\epsilon})=0.$$
We can suppose that $\epsilon$ is small enough such that
$\omega_\epsilon$ is contained in a small ball near the origin. Hence,
$Dv^{(k)}$ is uniformly bounded.
By comparing the image of
 $\omega_\epsilon$ under normal mapping of $v$
 and the cone with bottom at $\partial \omega_\epsilon$ and
top at the origin,
\beq
|N_v(\omega_\epsilon)|\geq  C\delta_\epsilon.
\eeq

On the other hand,
$\omega_\epsilon\subset\{-\epsilon \leq z_1\leq \epsilon\}$ since
$v\geq |z_1|$. By the convexity and the assumption above,
$\omega_\epsilon\subset
\{-\frac{\epsilon}{\delta_\epsilon} \leq z_2\leq \frac{\epsilon}{\delta_\epsilon}\}$.
Therefore,
$$|\omega_\epsilon|\leq C\frac{\epsilon^2}{\delta_\epsilon}.$$
Furthermore, subtracting all $v^{(k)}$ by $l_\epsilon$, they
still satisfy the same equation.
By the determinant estimate in Lemma 4.3
and a similar argument as in (6.10),
\beq
|N_v(\omega_\epsilon\cap \{z \ | \ \xi_1\geq 0\})|
\leq C\frac{\epsilon^2}{\delta_\epsilon}.
\eeq
Combining (6.11) and (6.12),
$$\frac{\epsilon^2}{\delta_\epsilon^2}\geq C.$$
However, according to our construction, $\frac{\epsilon}{\delta_\epsilon}$
goes to $0$ as $\epsilon$ goes to $0$. The contradiction follows.
\end{proof}

\begin{rem}
The following property has been used in the above proof.
Assume that $u$ is a 2-dimensional convex function
satisfying
\beq
u(0)=0,\ \ u(x)>0 \ \text{for}\  x\neq 0 \ \text{and} \
u(x_1, 0)\geq C|x_1|.
\eeq
Then
$$\frac{|N_u(S_{h, u}(0))|}{|S_{h, u}(0)|}
\rightarrow \infty \ \text{as}\  h\rightarrow 0.$$
In other words,  if
$$\det D^2u\leq C$$
and $u$ vanishes on boundary, then $u$ is $C^1$ in $\Om$.
This property can be extended to high dimension if
\beq
u(0)=0,\ \ u(x', x_n)\geq C|x_n| \ \text{and} \  u(x', x_n)\geq C|x'|^2,
\eeq
where $x'=(x_1, ..., x_{n-1})$.

It is also known that a generalized solution to
$$\det D^2u \geq C$$
in a domain in $\mathbb R^2$ must be strictly convex.
This result was first proved by Aleksandrov but a simple proof can be found in
\cite{TW3}.
\end{rem}

\vskip 10pt

\section{Strict convexity II}

\vskip 10pt

In this section, we rule out the Case (b) that all extreme points of
$\mathcal C$ lie on the boundary $\partial \Omega$.

First, we need a  stronger approximation. In the case of
the affine Plateau problem, this approximation was obtained
by \cite{TW5}. Here, we extend it to our functional $J(u)$.

\begin{theo}
Let $\varphi$, $\Omega$ be as in Theorem 2.6
and $u$ be the maximizer of the functional $J$
in $\overline S[\varphi, \Omega]$.
Assume that $\partial \Omega$ is lipschitz continuous.
Then there exist a sequence of smooth solutions
$u_m\in{W^{4, p}(\Omega)}$ to
\beq
U^{ij}w_{ij}=f_m=f+\beta_m\chi_{D_m} \ \text{in}\  \Omega
\eeq
such that
\beq
u_m \longrightarrow u \  \  \text{uniformly in}\  \Omega,
\eeq
where $D_m=\{x\in\Omega \  | \
dist(x, \partial \Omega)<2^{-m}\}$,
$\chi$ is the characteristic function, and $\beta_m$ is a constant.
Furthermore, we can choose $\beta_m$ sufficient large
($\beta_m\rightarrow \infty$ as $m\rightarrow \infty$) such that
for any compact subset $K\subset{N_\varphi(\Omega)}$,
\beq
K\subset{N_{u_m}(\Omega)}
\eeq
provided $m$ is sufficient large.
\end{theo}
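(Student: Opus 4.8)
The plan is to run the penalty method from Theorem 5.2, but with the penalty functions designed so that near $\partial\Omega$ the solution is forced to stay close to $\varphi$ \emph{with a prescribed normal mapping}, rather than merely close in value. Concretely, I would again work on a large ball $B=B_R(0)$ with $\varphi$ extended so as to be convex in $B$, $C^{0,1}(\overline B)$, and uniformly convex in the outer annulus $\{R-\tfrac12<|x|<R\}$. For each $m$ I would solve the second boundary value problem (5.2) with penalty term $H_j'(u-\varphi)$ in $B\setminus\Omega$ as before, obtaining a solution $u_{j,m}$ with $|u_{j,m}-\varphi|\le 4^{-j}$ on $B\setminus\Omega$; the new ingredient is an additional forcing term of size $\beta_m$ supported on the collar $D_m=\{x\in\Omega\mid \mathrm{dist}(x,\partial\Omega)<2^{-m}\}$, i.e. I replace $f$ by $f_m=f+\beta_m\chi_{D_m}$ inside $\Omega$ in the right-hand side of (5.2). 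Lemma 5.1 still applies (the estimates there only used $|f|\le C$ on $\Omega$, $\sup|Du|$, and the structure of $H$), so for each fixed $m$ there is a locally smooth solution $u_{j,m}$ and, letting $j\to\infty$ along a subsequence, a convex limit $u_m$ which restricts to an element of $\overline S[\varphi,\Omega]$ solving (7.1). The argument of Theorem 5.2 — comparing $J_j$ on $u_{j,m}$ against the extension $v_j$, using upper semicontinuity of $A$ and the smallness estimate (5.14) on the thin annulus $D_j$ — shows that $u_m$ is the maximizer of the penalized functional $J+\beta_m\int_{D_m}u\,dx$ over $\overline S[\varphi,\Omega]$, and as $m\to\infty$ (so $|D_m|\to0$) these maximizers converge uniformly to the maximizer $u$ of $J$ itself, giving (7.2).

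The substance is (7.3): the large penalty $\beta_m$ on the collar must push the gradient image $N_{u_m}(\Omega)$ to fill out any prescribed compact $K\subset N_\varphi(\Omega)$. I would fix such a $K$ and a point $p\in K$; since $K\subset\subset N_\varphi(\Omega)=D\varphi(\overline\Omega)$ (interior relative to the image), $p=D\varphi(x_p)$ for some $x_p$ in the interior of $\Omega$, and there is a linear function $\ell_p(x)=\varphi(x_p)+p\cdot(x-x_p)-\sigma$ with $\sigma>0$ small, uniform over $K$, such that $\ell_p<\varphi$ on $\partial\Omega$ while $\{\ell_p>\varphi\}$ is a fixed-size neighbourhood of $x_p$ compactly contained in $\Omega$. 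To produce $p\in N_{u_m}(\Omega)$ it suffices to show $\{u_m<\ell_p\}\ne\emptyset$ and is compactly contained in $\Omega$, for then $u_m-\ell_p$ attains an interior minimum and $p\in Du_m$ there. If instead $u_m\ge\ell_p$ throughout $\Omega$ while $u_m=\varphi<\ell_p$ somewhere near $\partial\Omega$, I derive a contradiction by an integration-by-parts identity exactly as in the lower-bound step of Lemma 5.1: on the region $\omega=\{u_m<\ell_p\}\cap\{\,|x|\text{ near }\partial\Omega\,\}$ one has
\[
\int_{\omega}f_m(u_m-\ell_p)\,dx\le\int_\omega w\det D^2u_m\,dx\le C,
\]
the last bound from $\lim_{t\to0}tF'(t)\le C_3$; but the left side contains $\beta_m\int_{\omega\cap D_m}(u_m-\ell_p)\,dx$, which is bounded below by $c\,\beta_m|\omega\cap D_m|$ with $\omega\cap D_m$ of size bounded below independently of $m$ (since $u_m\to u\le\varphi<\ell_p$ on a boundary collar and $\varphi$ is uniformly convex there). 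Hence for $\beta_m$ large enough this is impossible, forcing $\{u_m<\ell_p\}$ to meet the interior, and then the uniform convergence $u_m\to u$ together with the strict inequality $\ell_p<\varphi$ on $\partial\Omega$ keeps the contact set away from $\partial\Omega$, so $p\in N_{u_m}(\Omega)$. A compactness argument over $p\in K$ (the constants $\sigma$, the collar width, and the threshold for $\beta_m$ can all be chosen uniformly on the compact $K$) then yields $K\subset N_{u_m}(\Omega)$ for all large $m$.

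The main obstacle I anticipate is making the last step genuinely uniform over $K$: one needs the geometric quantities — how far $\ell_p$ must dip below $\varphi$, how large $\{u_m<\ell_p\}$ is forced to be, and how the bounded-overlap region $\omega\cap D_m$ is sized — to be controlled by a single modulus depending only on $K$, $\Omega$, $\varphi$, and $f$, not on the individual $p$ or on $m$. This is where the uniform convexity of $\varphi$ in the outer collar and the fact that $K$ is a \emph{compact} subset of the open-in-$D\varphi(\overline\Omega)$ image are both essential; the rest is a faithful repetition of the penalty and integration-by-parts arguments already carried out in Sections~2 and~5.
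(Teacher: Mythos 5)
Your steps (i)--(ii) match the paper: solve the second boundary value problem with combined forcing $f_m$ inside $\Omega$ and $H_j'(u-\varphi)$ in $B_R\setminus\Omega$, send $j\to\infty$, and recognise $u_m$ as the maximizer of the penalised functional $J_m(v)=J(v)-\beta_m\int_{D_m}v\,dx$ over $\overline S[\varphi,\Omega]$. (Note the sign: with $f_m=f+\beta_m\chi_{D_m}$ the extra term is subtracted, not added as you wrote.) But your claim that $u_m\to u$ simply because $|D_m|\to 0$ is a genuine gap, since $\beta_m\to\infty$ at the same time and nothing forces $\beta_m\int_{D_m}v$ to vanish. The paper handles this by introducing $\varphi_*$ (the upper envelope of tangent planes of $\varphi$ taken at points of $B_R\setminus\overline\Omega$, a universal lower barrier for $\overline S[\varphi,\Omega]$) and the competitor $\tilde u_m$ (envelope of linear functions $\le u$ in $\Omega$ and $\le\varphi_*$ in $D_m$). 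Because $u_m\ge\varphi_*=\tilde u_m$ on $D_m$, the $\beta_m$ term has a sign in the comparison $J_m(\tilde u_m)\le J_m(u_m)$ and simply drops out, giving $J(\tilde u_m)\le J(u_m)$; combined with $J(u)\le J(\tilde u_m)+\epsilon_0$ (which uses $\det\partial^2\tilde u_m\to\det\partial^2 u$ a.e.) and upper semi-continuity, this pins the limit as $u$. You need some such cancellation; ``$|D_m|\to 0$'' alone does not do it.

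Your argument for (7.3) does not work as written. You propose to contradict ``$u_m\ge\ell_p$ in $\Omega$'' by integrating by parts over $\omega=\{u_m<\ell_p\}$, but under that very hypothesis $\omega$ is empty. Even if $\omega$ were nonempty, on a sublevel set one has $u_m-\ell_p<0$, so the term $\beta_m\int_{\omega\cap D_m}(u_m-\ell_p)$ is nonpositive and only makes the left-hand side \emph{smaller} as $\beta_m\to\infty$; there is no contradiction with the upper bound coming from $\lim_{t\to 0}tF'(t)\le C_3$. The paper instead argues variationally in step (iv): if $u_m(x_0)\ge\varphi_*(x_0)+\epsilon_0$ for some $x_0\in D_m$, it builds the competitor $u_{m*}$ (envelope of linear functions $\le u_m$ in $\Omega$ and $\le\varphi_*$ in $D_m$), so that $u_{m*}=\varphi_*$ on a ball $B_{C\epsilon_0}(x_0)$, and observes
$$J_m(u_m)-J_m(u_{m*})=J(u_m)-J(u_{m*})-\beta_m\int_{D_m}(u_m-u_{m*})\,dx,$$
which becomes negative once $\beta_m$ is large since $u_m-u_{m*}\ge\tfrac{\epsilon_0}{2}$ on that ball --- contradicting that $u_m$ maximizes $J_m$. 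Thus $u_m$ is forced down onto $\varphi_*$ throughout $D_m$, and (7.3) follows from this pinning. The variational comparison is the right mechanism here because it hits the $\beta_m$ term with the correct sign; your PDE/integration-by-parts route does not.
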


\begin{proof}
By subtracting the constant $G(0)$,
we assume that $G(0)=0$ and $G\geq 0$.
The proof is divided into four steps.

(i) Let $B=B_R(0)$ be a large ball such that $\Omega\subset B_R$.
By assumption, $\varphi$ is defined in a neighborhood of
$\Omega$, so we can extend $u$ to $B$ such that
$\varphi$ is convex in $B$, $\varphi\in{C^{0, 1}(\overline B)}$
and $\phi$ is constant on $\partial B$.
Consider the second boundary value problem with
\begin{eqnarray*}
f_{m, j} &=&
\begin{cases}
f+\beta_m\chi_{D_m}& \ \text{in}\  \Omega,  \\
H_j'(u-\varphi)& \ \text{in}\  B_R\setminus \Omega,
\end{cases}
\end{eqnarray*}
where $H_j(t)=H(4^j t)$ is given by (5.1).
By Lemma 5.1, there is a solution $u_{m, j}$ satisfying
\beq
|u_{m, j}-\varphi|\leq 4^{-j},
\ x\in{B_R\setminus\Omega}.
\eeq

\vskip 10pt


(ii) By the convexity, $u_{m, j}$ sub-converges to
a convex function $u_{m}$ as $j \rightarrow \infty$ and
$u_{m}=\varphi$ in $B_R\setminus\Omega$.
Note that $u_{m}\in{\overline S[\varphi, \Omega]}$
when restricted in $\Omega$.
By Theorem 5.2, $u_{m}$ is the maximizer of the functional
\beq
J_{m}(v)=\int_{\Omega}G(\det\partial^2v)\,dx-
\int_\Omega (f+\beta_m\chi_{D_m})v\,dx
\eeq
in $\overline S[\varphi, \Omega]$.

\vskip 10pt

(iii) Since $u_m\in{\overline S[\varphi, \Omega]}$,
$u_m$ converges to a convex function $u_\infty$
in $\overline S[\varphi, \Omega]$ as $m\rightarrow \infty$.
We claim that $u_\infty$ is the maximizer $u$.
The proof is as follows.

Define
$$\varphi_{*}=\sup\{l(x) \ | \ l  \  \text{is a tangent plane of
$\varphi$ at some point in $B_R\setminus \overline\Omega$} \}.$$
Then $\varphi_{*}\in{\overline S[\varphi, \Omega]}$
and $v\geq \varphi_{*}$ for any $v\in{\overline S[\varphi, \Omega]}$.
We consider the maximizer $u$. Let
$$\tilde u_m= \sup\{l(x)\ | \ l \ \text{is linear},\
l\leq u \  \text{in}\  \Omega
\  \text{and}\  l\leq \varphi_* \ \text{in}\  D_m \}.$$
Then $\tilde u_m\in{\overline S[\varphi, \Omega]}$
and $\tilde u_m=\varphi_{*}$ in $D_m$.
Since $u$ is convex, it is twice differentiable almost everywhere.
By the definition of $\tilde u_m$, $\tilde u_m=u$ at any point
where $D^2u>0$ when $m$ is sufficiently large.
Therefore, we have
$\det \partial^2\tilde u_m \rightarrow \det\partial^2 u$ a.e..
By the upper semi-continuity of the functional $A(u)$ and Fatou lemma,
$$\lim_{m\rightarrow\infty}
\int_{\Omega}G(\det\partial^2\tilde u_m )\,dx
=\int_\Omega G(\det\partial^2 u)\,dx.$$
It follows that for a sufficiently small $\epsilon_0>0$,
\beq
J(u)\leq J(\tilde u_m)+\epsilon_0
\eeq
provided $m$ is sufficiently large.

On the other hand, we consider the functional $J_{m}$.
By (ii), $u_{m}$ is the maximizer of $J_m$
in $\overline{S}[\varphi, \Omega]$,
so we have
\beq
J_{m}(\tilde u_m)\leq  J_{m}(u_{m}).
\eeq
Note that $u_{m}\geq \varphi_{*}=\tilde u_m$ in $D_m$.
Hence, we obtain
$$\int_{D_m} \beta_m u_{m}\,dx\geq \int_{D_m} \beta_m \tilde u_m\,dx.$$
By the definition of $J_m$, it follows
\beq
J(\tilde u_m)\leq  J(u_{m})+\epsilon_0.
\eeq
for sufficiently large $m$.

Finally, by (7.6), (7.8) and the upper semi-continuity of $A(u)$,
\beqs
J(u)
&\leq& J(\tilde u_m)+\epsilon_0\\
&\leq& J(u_{m})+\epsilon_0\\
&\leq& J(u_{\infty})+2\epsilon_0.
\eeqs
By taking $\epsilon_0\rightarrow 0$,
this implies that $u_\infty$ is the maximizer.
By the uniqueness of maximizers, $u_\infty=u$.

\vskip 10pt

(iv) It remains to prove (7.3). We claim that for any fixed $m$,
\beq
\lim_{\beta_m\rightarrow\infty}u_m(x)\leq \varphi_{*}(x).
\eeq
We prove it by contradiction. Suppose that there is $x_0\in{D_m}$
such that $u_{m}(x_0)\geq \varphi_*(x_0)+\epsilon_0$
for some $\epsilon_0>0$. Since  $u_{m}$ and $\varphi_*$
are uniformly Lipschitz continuous,
$u_{m}(x)\geq \varphi_*(x)+\frac{\epsilon_0}{2}$
in a ball $B_{C\epsilon_0}(x_0)$ for some constant $C$.
Let
$$u_{m*}= \sup\{l(x)\ | \ l \ \text{is linear},\
l\leq u_{m} \  \text{in}\  \Omega
\  \text{and}\  l\leq \varphi_* \ \text{in}\   D_m \}.$$
Then $u_{m*}\in{\overline S[\varphi, \Omega]}$, and satisfies
$$u_{m*}\leq u_{m} \ \text{in}\ \Omega, \ u_{m*}=\varphi_*
\ \text{in}\  B_{C\epsilon_0}(x_0).$$
Hence,
\beqs
J_m(u_m)-J_m(u_{m*})
=J(u_m)-J(u_{m*})
-\beta_m\int_{D_m}u_m-u_{m*}\,dx
\eeqs
becomes  negative when $\beta_m$ is sufficiently large.
This is a contradiction to that $u_m$ is a maximizer of $J_m$.
\end{proof}

\begin{rem}
If $\varphi\in{C^1}$, we can restate (7.3) in the theorem as
\beq
|D(u_m-\varphi)|\rightarrow 0 \  \  \text{uniformly on}\   \partial \Omega.
\eeq
\end{rem}

\vskip 10pt

Now we deal with Case (b). By Theorem 7.1,
there exists a solution $u^{(k)}_{m}$ to
\beq
U^{ij}w_{ij}=f_m,
\eeq
where
\beq
w=G'_k(\det D^2u),
\eeq
such that
$$u^{(k)}_m\longrightarrow u^{(k)}, \  m\rightarrow \infty.$$
and for any compact set $K\subset D\varphi(\Omega)$,
\beq
K\subset Du_m^{(k)}(\Omega)
\eeq
for large $m$.
Hence, we can choose a sequence $m_k\rightarrow \infty$
such that
\beq
u_k:= u^{(k)}_{m_k}\longrightarrow u_0.
\eeq

\begin{lem}
Assume that $\Omega$ and $\varphi$ are smooth.
Then $\mathcal M_{0}$ contains no line segments with both
endpoints on $\partial \mathcal M_{0}$.
\end{lem}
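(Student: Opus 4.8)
The plan is to argue by contradiction, following the strategy used to exclude Case (a) in Section 6 but now exploiting the boundary regularity of $\Omega$ and $\varphi$ together with the strong approximation of Theorem 7.1. Suppose $\mathcal M_0$ contains a line segment whose image $\ell$ in $\Omega$ joins two boundary points $p_1, p_2 \in \partial\Omega$. Let $l(x)$ be the tangent function of $u_0$ along $\ell$ and $\mathcal C = \{u_0 = l\}$ the contact set. Since all extreme points of $\mathcal C$ lie on $\partial\Omega$ (this is Case (b)) and $\mathcal C$ is convex, $\mathcal C$ is the convex hull of a subset of $\partial\Omega$; in particular $\mathcal C \cap \partial\Omega$ contains at least two points. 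Because $\varphi$ is uniformly convex (hence strictly convex) on $\partial\Omega$, the affine function $l$ can touch the boundary graph $\{(x,\varphi(x)) : x \in \partial\Omega\}$ at most along a set with no interior in $\partial\Omega$; the key structural fact to extract is that $l = \varphi$ cannot hold on a whole boundary arc, so the contact set meets $\partial\Omega$ in isolated extreme points, and after an affine normalization we may assume $l = 0$, that $\mathcal C$ contains the segment joining two such boundary points, and that $u_0 > 0$ off $\mathcal C$.

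The next step is to perform the same rescaling as in Proposition 6.1: pick a small $\epsilon$, set $l_\epsilon$ a linear function slightly below $l$ near the segment, let $\Omega_\epsilon = \{u_0 < l_\epsilon\}$, normalize it by $T_\epsilon$ and rescale $u_0$ and the approximating smooth solutions $u_k = u^{(k)}_{m_k}$ (which exist and converge to $u_0$ by (7.17)) by a factor $\epsilon^{-1}$. By Remark 2.2 the rescaled functions again solve an equation of the form (2.6) with parameters $\tilde\delta_k \to 0$ and right-hand side $\epsilon_k f$, and the rescaled graphs converge in Hausdorff distance to a limiting convex surface $\tilde{\mathcal M}_0$ containing a segment, with boundary behaviour governed by the boundary data. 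The crucial new input, and the reason Theorem 7.1 was proved, is the gradient control (7.11): since $K \subset Du_m^{(k)}(\Omega)$ for any compact $K \subset D\varphi(\Omega)$ and $m$ large, the normal image of the rescaled solutions near the boundary portion of $\mathcal C$ is not allowed to collapse, i.e. $Du_k$ near the boundary segment stays close to the boundary values $D\varphi$, which are spread out because $\varphi$ is uniformly convex. This forces a lower bound on $|N_{v^{(k)}}(S_\delta)|$ as in (6.10) and (6.11).

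Then I would run the measure-theoretic contradiction exactly as in Case I/Case II of Section 6: rotate the $\mathbb R^3$ coordinates (legitimate since (2.6) is unimodular-invariant, Remark 2.2), write the limiting surface locally as a graph $z_3 = v(z_1,z_2)$ with $v \geq \frac12|z_1|$ and $v(z_1,0) = \frac12|z_1|$, and apply the determinant upper bound of Lemma 4.3 to $\hat v^{(k)}$ on the sections $S_{\delta,v}(0)$ (respectively on domains $\omega_\epsilon$ cut out by linear functions in the degenerate direction). Comparing the normal-image estimate $|N_v(S_\delta)| \geq C\,\delta/\kappa_\delta$ with $|N_{v^{(k)}}(S_\delta)| = \int_{S_\delta}\det D^2 v^{(k)} \leq C\,\delta\,\kappa_\delta$ gives $\kappa_\delta \geq C > 0$ uniformly, while the convergence of $\mathcal C$ to a segment forces $\kappa_\delta \to 0$; when $L$ is a segment rather than a point one uses the $\omega_\epsilon$-version and the ratio $\epsilon/\delta_\epsilon \to 0$ exactly as in (6.11)–(6.12). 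In each case the contradiction follows.

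The main obstacle I expect is the first step: verifying that, under the smoothness and uniform convexity of $\Omega$ and $\varphi$, a segment of $\mathcal M_0$ with both endpoints on $\partial\mathcal M_0$ actually produces a contact set whose extreme points on $\partial\Omega$ are isolated and do not lie along a boundary arc where $l$ would have to coincide with the strictly convex boundary graph — in other words, translating the hypothesis ``both endpoints on $\partial\mathcal M_0$'' into the clean geometric normalization needed to start the rescaling. Once that normalization is in place, the rescaling and the normal-mapping estimates are a faithful repetition of Proposition 6.1, with Theorem 7.1's gradient control (7.11)/(7.18) playing the role that strict convexity of the interior extreme point played in Case (a). A secondary subtlety is checking that the boundary term in the normal-image comparison genuinely benefits from $Du_k \approx D\varphi$ near $\partial\Omega$; this is where the Legendre-transform determinant bound (Lemma 3.2) and the ``uniformly convex boundary data'' hypothesis must be combined carefully.
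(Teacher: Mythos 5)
Your proposal identifies the right ingredients — Theorem~7.1, the uniform convexity of $\varphi$, and Lemma~3.2 — but it arranges them in the wrong order, and the main mechanism you propose does not go through. You suggest running a blowup/normalization of $\Omega_\epsilon=\{u_0<l_\epsilon\}$ around the offending segment, as in Proposition~6.1, and then applying the interior determinant estimate (Lemma~4.3) to the rotated, rescaled graphs. The problem is that the endpoints of the segment lie on $\partial\Omega$, so $\Omega_\epsilon$ is \emph{not} compactly contained in $\Omega$: near $(0,\pm1)$ the boundary data $\varphi$ agrees with $l$ to first order, and a band of $\partial\Omega$ near those points is swallowed by $\{u_0<l_\epsilon\}$. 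After normalization the rescaled functions only satisfy the equation on part of the normalized domain, and the interior estimate of Lemma~4.3 — which is what powers Cases~I and~II of Section~6 — simply cannot be applied across that missing boundary piece. The appeal to ``gradient control near $\partial\Omega$ keeping the normal image spread out'' does not repair this: you would still need a determinant bound on a region touching $\partial\Omega$, and that is exactly the estimate you do not have. This is not a secondary subtlety; it is the reason Case~(b) requires a different argument than Case~(a).

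The paper's actual proof sidesteps the boundary issue entirely by dualizing. After the normalization $L=\{(0,x_2,0):-1\le x_2\le1\}$ with $(0,\pm1)\in\partial\Omega$, the smoothness of $\Omega$ and $\varphi$ and the transversality of $L$ to $\partial\Omega$ give the quadratic barrier $u_0\le \tfrac{C}{2}x_1^2$ in $\Omega$. Passing to the Legendre transforms on $\Omega^*=D\varphi(\Omega)$, this dualizes to $u_0^*(y)\ge\tfrac{1}{2C}y_1^2$, while the segment in $u_0$ dualizes to a kink, $u_0^*(0,y_2)\ge|y_2|$. Crucially, uniform convexity of $\varphi$ guarantees $0\notin D\varphi(\partial\Omega)$, so $0$ is an \emph{interior} point of $\Omega^*$; the boundary problem has become an interior problem. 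The gradient control (7.3) of Theorem~7.1 is then used precisely to ensure that the Legendre transforms $u_k^*$ of the approximating solutions are smooth and satisfy (7.19) on domains exhausting $\Omega^*$, so that the interior estimate Lemma~3.2 (which depends only on $\inf f$, hence is unaffected by $\beta_{m_k}\to\infty$) yields $\det D^2 u_k^*\le C$ near $0$, and in the limit $\det D^2 u_0^*\le C$. That bound, combined with $u_0^*\ge\tfrac{1}{2C}y_1^2$ and the kink $u_0^*(0,y_2)\ge|y_2|$, contradicts Remark~6.2. So you should replace the proposed rescaling/Section~6 argument by this Legendre-transform argument; the blowup machinery is the wrong tool for Case~(b).
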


\begin{proof}
Suppose that $L$ is a line segment in $\mathcal M_0$ with both
end points on $\partial \mathcal M_0$.
By subtracting a linear function, we suppose that $u_0\geq 0$
and $l$ lies in $\{x_3=0\}$.
By a translation and a dilation of the coordinates, we may further
assume that
\beq
L=\{(0, x_2, 0)\ | \   -1\leq x_2\leq 1\}
\eeq
with $(0, \pm 1)\in \partial \Omega$. Note that by
Remark 2.2, these transformations do not change the
essential properties of equation (2.6).

Since $\varphi$ is a uniformly convex function in a neighborhood of
$\Omega$ and $\varphi=u_0$ at $(0, \pm 1)$, $L$ must be
transversal to $\partial \Omega$ at $(0, \pm 1)$.
Hence, by $u_0=\varphi$ on $\partial \Omega$
and the smoothness of $\varphi$ and $\partial \Omega$, we have
$$u_0(x)=\varphi(x)\leq \frac{C}{2}x_1^2,  \  x\in{ \partial \Omega}.$$
By the convexity of $u_0$,
\beq
u_0(x)\leq \frac{C}{2}x_1^2,  \  x\in{\Omega}.
\eeq

Now we consider the Legendre function $u_0^*$
of $u_0$ in $\Omega^*=D\varphi(\Omega)$, given by
$$u_0^*(y)=\sup\{x\cdot y-u_0(x), x\in\Omega\}, \ y\in{\Omega^*}.$$
Note that
$(0, \pm 1)\in \partial \Omega$.
By the uniformly convexity of $\varphi$,
$0\notin D\varphi(\partial \Omega)$.
Hence, $0\in {\Omega^*}$.
By (7.15), (7.16) and the smoothness of $\varphi$, we have
\beqn
u^*(0, y_2)&\geq& |y_2|,\\
u^*(y)&\geq & \frac{1}{2C}y_1^2.
\eeqn

On the other hand, by the approximation (7.13), (7.14),
the Legendre function of $u_k$, denoted by $u_k^*$, is smooth in
$$\Omega^*_{\epsilon_k}
=\{y \in{\Omega^*\  |  \   dist(y, \partial \Omega^*)>\epsilon_k}\}.$$
 with
$\epsilon_k\rightarrow 0$ as $k\rightarrow \infty$ 
and satisfies the equation
\beq
{u^*}^{ij}{w^*}_{ij}=-f_{m_k}(Du^*)
\eeq
in $\Omega^*_{\epsilon_k}$, where
\beq
{w^*}=G_k({d^*}^{-1})-{d^*}^{-1}G_k'({d^*}^{-1}).
\eeq

By (7.17), (7.18), $u_0^*$ is strictly convex at $0$. Then
$\{y\ | \ u^*_0<h\}\subset \Omega^*_{\epsilon_k}$ providing
$m$ is sufficiently large. 
Note that $u_k^*$ converges to $u_0^*$.
By Lemma 3.2, we have the estimate
$$\det D^2u_k^*\leq C$$
near the origin in $\Omega^*$. Note also that in Lemma 3.2,
$C$ depends on $\inf f$ but not on $\sup f$.
In other words, the large constant $\beta_{m_k}$ in (7.1)
does not affect the bound $C$.
Therefore sending $k\rightarrow \infty$, we obtain
$$\det D^2u^*_0\leq C$$
in the sense that the Monge-Amp\`ere measure of $u^*_0$ is
an $L^\infty$ function.
This is a contradiction with (7.17), (7.18) according to Remark 6.2.
\end{proof}

In conclusion, we have proved that $u_0$ is strictly convex in $\Omega$
in dimension $2$. Theorem 1.1 follows from Theorem 5.4.

\vskip 10pt

\section{Appendix: Second boundary value problem}

\vskip 10pt

In order to construct approximation solutions to the maximizer of $J(u)$,
we employ the second boundary value problem for equation (2.6).
This section is just a modification of the second boundary problem in \cite{TW2}.
We include it here for completeness.
Throughout this section, we will denote by $d$
the determinant $\det D^2u$ for simplicity.

We study the existence of smooth solutions to the following problem.
\begin{eqnarray}
&&U^{ij} w_{ij}=f(x, u), \ \text{in}\ \Omega, \\
&&w=G'(d), \ \text{in}\ \Omega,\\
&&w=\psi, \ \text{on}\  \p \Omega,\\
&&u=\varphi, \ \text{on}\  \p \Omega,
\end{eqnarray}
where $\Omega$ is a smooth, uniformly convex domain in $\Bbb{R}^n$,
$\varphi$, $\psi$ are smooth functions on $\p \Omega$ with
$$0<C_0^{-1}\leq \psi \leq C_0.$$
$f\in L^\infty(\Omega\times R)$ is nondecreasing in $u$ and there is $t_0\leq 0$
such that
$$f(x, t)\leq 0,\  t\leq t_0.$$
We note that this condition is not needed if $u$
is bounded from below.

By Inverse Function Theorem,
$w=G'(d)$ has an inverse function $d=g(w)$.
$g$ is an decreasing function which goes to $0$
as $w\rightarrow \infty$ and goes to $\infty$ as $w\rightarrow 0$.
To solve the problem (8.1)-(8.4),
we first consider the approximating problem
\begin{eqnarray}
&&U^{ij} w_{ij}=f, \ \text{in}\ \Omega, \\
&&\det D^2u=\eta_k g(w)+(1-\eta_k), \ \text{in}\ \Omega,
\end{eqnarray}
where $\varphi$ and $\psi$ satisfy (8.3), (8.4) and
$\eta_k\in{C_0^\infty(\Omega)}$ is the
cut-off function satisfying $\eta_k=1$ in
$\Omega_k=\{x\in \Omega \ |\ dist(x, \p \Omega)> \frac{1}{k}\}$.

\begin{lem}
Suppose that $f\in{L^{\infty}}$ satisfies the condition above.
If $(u, w)$ is the $C^2$ solution of (8.5), (8.6),
there is a constant depending only on $diam(\Omega)$,
$f$, $\varphi$, $\psi$ and independent of $k$, such that
\begin{eqnarray}
&&C^{-1}\leq w\leq C, \ \text{in}\ \Omega, \\
&&|w(x)-w(x_0)|\leq C|x-x_0|, \ \text{for any}\ x\in{\Omega}, x_0\in{\p \Omega}.
\end{eqnarray}
\end{lem}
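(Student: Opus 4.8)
The plan is to regard (8.5) as a linear elliptic equation for $w$: $(U^{ij})$ is positive definite because (8.6) forces $\det D^2u=\eta_kg(w)+(1-\eta_k)>0$, and $\partial_jU^{ij}\equiv0$, so the equation is in divergence form $\partial_i(U^{ij}w_j)=f$. From this I would deduce (8.7) and (8.8) by the comparison principle against explicit barriers; the only facts about $u$ that enter are the algebraic identities $U^{ij}u_{ij}=n\det D^2u$, $U^{ij}\ell_{ij}=0$ for affine $\ell$, and $\sum_iU^{ii}\ge n(\det D^2u)^{(n-1)/n}$. First I would record the $k$-independent bounds that the barriers need: $u\le\max_{\p\Omega}\varphi$ by convexity, $u\ge -C$ from the sign condition $f(x,t)\le0$ for $t\le t_0$ (or directly from the hypothesis, noted after (8.4), that $u$ is bounded below), and $|Du|\le C$, obtained on $\p\Omega$ from $\varphi\in C^2$ and the uniform convexity of $\Omega$ by standard barrier constructions and propagated inward by convexity; hence $\int_\Omega\det D^2u\,dx=|Du(\Omega)|\le C$, so from (8.6), $\int_{\Omega_k}g(w)\,dx\le C$ with $C$ independent of $k$ since $g$ is a fixed decreasing function.

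For (8.7) I would argue by the maximum principle on an auxiliary function combining $w$ and $u$. Since $w>0$ wherever $\eta_k>0$ and $w=\psi\ge C_0^{-1}$ on $\p\Omega$, the lower bound $w\ge C^{-1}$ near $\p\Omega$ — equivalently $\det D^2u\le C$ near $\p\Omega$ — can be obtained exactly as in the near-boundary determinant estimate in the proof of Lemma 5.1, with $z=w+\log w-\beta\log(u-\ell)$, $\ell$ an affine function lying below $u$ on the slab in question and $\beta$ large: if the minimum of $z$ lies on $\p\Omega$ the bound $w\le C_0$ closes it, and if it lies at an interior point the inequality $U^{ij}z_{ij}\ge0$, together with (8.5) and the concavity of $\log$, forces $w\ge C^{-1}$ there. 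Once $w$ is bounded below, (8.6) gives $\det D^2u\le\max\{g(\inf w),1\}=:C$, so the coefficients $U^{ij}$ are under control and a global barrier of the form $w\mp N|x|^2$, or $w\mp N(\Phi-u)$ with $\Phi$ a uniformly convex function adapted to the boundary data, bootstraps to the two-sided bound $C^{-1}\le w\le C$: through (8.6) and the trace inequality each one-sided bound on $w$ is traded for the complementary bound on $\det D^2u$, and the latter restores exactly the ellipticity the remaining barrier requires. All constants stay free of $k$ because $g$ and the bounds of the first step do.

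For (8.8), fix $x_0\in\p\Omega$. Using the uniform convexity of $\Omega$, pick $\rho\in C^2(\overline\Omega)$ with $\rho<0$ in $\Omega$, $\rho=0$ on $\p\Omega$, and $D^2\rho\ge c_0I$, so that $U^{ij}\rho_{ij}\ge c_0\sum_iU^{ii}\ge c_1>0$ by the lower bound on $\det D^2u$ just obtained. Extending $\psi$ to a $C^2$ function on $\overline\Omega$, form $b^{\pm}(x)=\psi(x_0)+D\psi(x_0)\cdot(x-x_0)\pm\bigl(A|x-x_0|^2-B\rho(x)\bigr)$; choosing $A$ from $\|D^2\psi\|_{C^0}$ so that $b^+\ge w\ge b^-$ on $\p\Omega$, and then $B$ large so that $U^{ij}b^+_{ij}\le f\le U^{ij}b^-_{ij}$ in $\Omega$ (absorbing $\|f\|_\infty$ and the term $2A\sum_iU^{ii}$ into $Bc_1$), the comparison principle yields $|w(x)-\psi(x_0)|\le C\bigl(|x-x_0|+\operatorname{dist}(x,\p\Omega)\bigr)\le C|x-x_0|$ for $x\in\Omega$, which is (8.8). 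The main obstacle is the coupled degeneracy in (8.7): a priori the ellipticity of (8.5) degenerates both where $\det D^2u$ is large (i.e. $w$ small) and where it is small (i.e. $w$ large), so no one-sided bound on $w$ follows from a naive maximum principle on $w$ alone — one has to extract the first bound from the auxiliary function $z$ and then let the coupling with (8.6) propagate it, keeping every constant independent of $k$.
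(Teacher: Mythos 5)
Your treatment of (8.8) via the two-sided barrier $b^{\pm}$ is sound once (8.7) is in hand, and your preliminary reductions ($U^{ij}$ divergence-free, $|Du|$ bounded via boundary barriers, the algebraic trace inequality) are all correct and all used in the paper. But the proof of (8.7) itself has a gap that does not close, and it is exactly at the point you flag as "the main obstacle."

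The auxiliary function you borrow from Lemma 5.1, $z=w+\log w-\beta\log(u-\ell)$, proves a \emph{lower} bound $w\ge C^{-1}$ near $\p\Omega$ (equivalently $\det D^2u\le C$ there). From that point your bootstrap cannot start, for two reasons. First, an upper bound $\det D^2u\le C$ does \emph{not} put "the coefficients $U^{ij}$ under control" in the sense a barrier needs: the comparison arguments you invoke require $\sum_i U^{ii}\ge c>0$, i.e.\ $\det D^2u$ bounded \emph{below}, i.e.\ $w$ bounded \emph{above} — which is the complementary bound you have not yet obtained. Second, even granting the trade described in your last sentence, the direction is reversed: $w$ bounded below gives $d$ bounded above, which does not restore ellipticity; only $w$ bounded above would. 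So the claimed "trade each one-sided bound on $w$ for the complementary bound on $\det D^2u$" never produces the ingredient the barriers need, and a global upper bound on $w$ never appears.

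The missing idea is the interior maximum argument for the upper bound on $w$, which in the paper is carried by the auxiliary function $z=\log w+A|x|^2$. At its extremum, using $z_i=0$ to eliminate $w_i$ and $\sum_i u^{ii}\ge n(\det D^2u)^{-1/n}$, one gets the inequality $d^{\frac{n-1}{n}}w\le C$ with $C$ depending only on $\sup|f|$ and $\operatorname{diam}\Omega$. This is where condition (c) on $G$ enters decisively: $\lim_{t\to 0}F'(t)=\infty$ is exactly $\lim_{d\to 0}d^{\frac{n-1}{n}}G'(d)=\infty$, so $d^{\frac{n-1}{n}}w\le C$ forces $d$ away from $0$ and hence $w=G'(d)\le C$ — crucially with constants independent of $\delta$ (hence of $k$). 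Without this step there is no global ellipticity, and the rest of the argument (one-sided boundary Lipschitz by barrier, then the lower bound $w\ge C^{-1}$ via the interior extremum of $\log w+w-\alpha h(u)$ together with the sign condition on $f$, then the second boundary barrier) cannot proceed. You should replace the Lemma-5.1-style auxiliary function at the start of your (8.7) argument with $\log w+A|x|^2$, obtain the global upper bound for $w$ first, and only then bring in barriers and the lower-bound auxiliary function $\log w+w-\alpha h(u)$.
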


\begin{proof}

The proof of the upper bound for $w$ is totally the same as that for
affine maximal surface equation in \cite{TW2}
by considering the auxiliary function
$$z=\log w+A|x|^2,$$
where $A>0$ is a constant to be determined later.
Suppose that $z$ attains its minimum at the point $x_0$.
If $x_0$ is a boundary point,
then $z(x_0)\geq C$, and hence $w\geq C$. If $x_0$ lies in the interior of $\Omega$,
we have, at $x_0$,
\begin{eqnarray}
&&0=z_i=\frac{w_i}{w}+ 2Ax_i, \nonumber\\
&&0\geq z_{ij}=\frac{w_{ij}}{w}-\frac{w_iw_j}{w^2}+2A\delta_{ij}. \nonumber\end{eqnarray}
Then
\beqs
0&\geq &u^{ij}z_{ij}\\
&=&\frac{f}{dw}-4A^2u^{ij}x_ix_j+2Au^{ii}\\
&\geq & \frac{f}{dw}+Ad^{-\frac{1}{n}}.
\eeqs
Note here we choose $A$ small. Therefore,
$$d^{\frac{n-1}{n}}w\leq C.$$
combining with the definition of $d$, $w$
and using the condition $F'(0)=\infty$ in (c), we obtain $w\leq C$.

By $w\leq C$, we have $\det D^2u \geq C$.
Suppose that $v$ is a smooth, uniformly convex
function such that $D^2v\geq K>0$ and $v=\psi$ on $\p \Omega$.
Then, if $K$ is large,
$$U^{ij}v_{ij}\geq KU^{ii}\geq K[\det D^2v]^{\frac{n-1}{n}}\geq CK\geq f,$$
which implies $U^{ij}(v-w)_{ij}\geq 0$.
By maximum principle, $v-w\leq 0$. We thus obtain
\beq
w(x)-w(x_0)\geq -C|x-x_0|,
\ \text{for any}\ x\in{\Omega},\   x_0\in{\p \Omega}.
\eeq

\vskip 8pt

To prove the lower bound of $w$, let
$$z=\log w+w-\alpha h(u),$$
where $\alpha>0$ is a constant to be determined later and
$h$ is a convex, monotone increasing function such that,
$$h(t)=t, \ \text{when}\ t\geq -t_0 \ \text{and}\  h\geq -t_0-1,
\ \text{when}\ t\leq -t_0.$$
Assume that $z$ attains its minimum at $x_0$.
If $x_0$ is near $\p \Omega$, by (8.9),
$z(x_0)\geq  -C.$
Otherwise, $x_0$ is away from the boundary.
Hence, we have, at $x_0$,
\beqs
&&0=z_i=\frac{w_i}{w}+ w_i-\alpha h'(u)u_i, \\
&&0\leq z_{ij}=\frac{w_{ij}}{w}-\frac{w_iw_j}{w^2}+
w_{ij}-\alpha h''(u)u_iu_j-\alpha h'(u)u_{ij}.
\eeqs
By maximum principle,
\beqs
0\leq u^{ij}z_{ij}
&=&\frac{f}{dw}-\frac{u^{ij}w_iw_j}{w^2}+\frac{f}{d}
-\alpha h''(u)u^{ij}u_iu_j-\alpha h'(u)n\\
&\leq &\frac{f}{dw}+\frac{f}{d}-\alpha h'(u)n.
\eeqs
If $u(x_0)\leq t_0$, $f\leq 0$, which immediately induces a contradiction.
Hence, $u(x_0)\geq t_0$, and $h'(u(x_0))\geq h'(t_0)$.
Then choosing $\alpha$ large enough,
we obtain $d\leq C$ at $x_0$ by the assumption (a).
Using the relation between $w$ and $d$,
we have $w(x_0)\geq C$.
By definition,
$$z=\log w+w-\alpha h(u)\geq z(x_0)\geq -C.$$
This implies $w\geq C$.

Similarly, with the upper bound of the determinant,
we can construct a barrier function $v$ from above for $w$ and prove
$$w(x)-w(x_0)\leq C|x-x_0|.$$

In conclusion, the lemma has been proved.
\end{proof}

\begin{prop}
There is a solution
$u\in{C^{2, \alpha}(\overline\Omega)\cap W^{4, p}(\Omega)}$
to the approximation problem (8.5), (8.6). If furthermore
$f\in{C^\alpha(\overline\Omega)}$, then
$u\in{C^{4, \alpha}(\overline\Omega)}$.
\end{prop}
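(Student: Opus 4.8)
The plan is to solve the coupled boundary value problem (8.5), (8.6) (with $u=\varphi$ and $w=\psi$ on $\partial\Omega$) by the method of continuity together with a degree-theoretic argument, exactly as in \cite{TW2} for the affine maximal surface equation; the only analytic input needed is a priori estimates, up to $C^{2,\alpha}(\overline\Omega)$, that are uniform along the deformation, together with Lemma 8.1, which confines $w$ (hence, through (8.6), $\det D^2u$) to a compact subinterval of $(0,\infty)$. Here $k$ is fixed, so $\eta_k$ is a fixed smooth function with compact support in $\Omega$; in particular $\det D^2u\equiv 1$ in a neighbourhood of $\partial\Omega$, which keeps (8.6) nondegenerate at the boundary and makes the classical boundary estimates for the Monge-Amp\`ere equation applicable.

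First I would assemble the a priori bounds. By Lemma 8.1 we have $C^{-1}\le w\le C$, hence by (8.6) also $C^{-1}\le\det D^2u\le C$ (with constants that may depend on $k$, which is all that is needed here). From the convexity of $u$, the boundary condition $u=\varphi$, and this two-sided determinant bound one gets $\sup_\Omega|u|\le C$ via the Aleksandrov maximum principle and comparison with paraboloids; the gradient bound $\sup_\Omega|Du|\le C$ follows from convexity in the interior and, near $\partial\Omega$, from barriers built from the uniform convexity of $\Omega$ and the smoothness of $\varphi$. Then the interior Pogorelov estimate for the Monge-Amp\`ere equation, combined with the boundary second derivative estimates of Caffarelli--Nirenberg--Spruck \cite{CNS} (using the uniform convexity of $\Omega$ and the identity $\det D^2u\equiv 1$ near $\partial\Omega$), yields $\|u\|_{C^{1,1}(\overline\Omega)}\le C$ and, crucially, $\Lambda^{-1}I\le D^2u\le\Lambda I$, so that the linearized operator $U^{ij}\partial_{ij}$ is uniformly elliptic on all of $\overline\Omega$.

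With uniform ellipticity in hand, the Caffarelli--Gutierrez H\"older estimate \cite{CG} for the equation $U^{ij}w_{ij}=f$ gives $w\in C^\alpha$ in the interior, while near $\partial\Omega$ the coefficients are smooth so $w$ is H\"older up to the boundary; hence $\det D^2u=\eta_kg(w)+(1-\eta_k)\in C^\alpha(\overline\Omega)$, and Caffarelli's $C^{2,\alpha}$ estimate for the Monge-Amp\`ere equation \cite{Caf1} upgrades $u$ to $C^{2,\alpha}(\overline\Omega)$, so that $U^{ij}\in C^\alpha$ and is uniformly elliptic. If $f\in L^\infty$, the $W^{2,p}$ theory for linear uniformly elliptic equations with continuous coefficients \cite{GT} gives $w\in W^{2,p}(\Omega)$ for all $p<\infty$, whence $\det D^2u$ is a smooth function of a $W^{2,p}$ function and $u\in W^{4,p}(\Omega)$; if moreover $f\in C^\alpha(\overline\Omega)$, Schauder theory gives $w\in C^{2,\alpha}$, then $\det D^2u\in C^{2,\alpha}$, then $u\in C^{4,\alpha}(\overline\Omega)$, and one iterates to the regularity allowed by the data. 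Since the estimates just listed are uniform along the deformation replacing $f$ by $tf$, $t\in[0,1]$ (and along any admissible deformation of $\psi$ to a constant, for which the problem at the trivial end is explicitly solvable), and Lemma 8.1 prevents $w$ from approaching $0$ or $\infty$, the degree-theoretic machinery of \cite{TW2} applies and produces the desired solution.

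The step I expect to be the main obstacle is the global, up-to-the-boundary second derivative estimate for $u$, and with it the uniform ellipticity of $U^{ij}\partial_{ij}$ on all of $\overline\Omega$: without this neither the Caffarelli--Gutierrez estimate for $w$ nor the continuity argument can be carried out. It is precisely the uniform convexity of $\Omega$ together with the fact that $\det D^2u\equiv 1$ near $\partial\Omega$ (a consequence of $\eta_k$ having compact support in $\Omega$) that make the classical boundary regularity theory for the Monge-Amp\`ere equation available in this setting.
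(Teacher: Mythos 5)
Your proposal is essentially the same as the paper's proof: both use the two-sided bound on $w$ from Lemma 8.1 together with Caffarelli--Gutierrez H\"older regularity for the linearized Monge-Amp\`ere equation, Caffarelli's $W^{2,p}/C^{2,\alpha}$ theory, standard elliptic bootstrap, and boundary estimates exploiting the fact that $\det D^2u\equiv 1$ near $\partial\Omega$ (since $\eta_k$ has compact support), then conclude with a degree-theoretic continuity argument deforming $(f,\psi)$ to $(0,1)$. Your write-up spells out a few intermediate steps (the $C^{1,1}$ bound and uniform ellipticity of $U^{ij}\partial_{ij}$) that the paper leaves implicit, but the route and the key lemmas invoked are identical.
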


\begin{proof}
By (8.7), using Caffarelli-Gutierrez's H\"older continuity
for linearized Monge-Amp\`ere equation \cite {CG}
we have the interior $C^\alpha$ estimate for $\det D^2u$,
for some $\alpha\in{(0, 1)}$.
Then by Caffarelli's $W^{2, p}$ and $C^{2,\alpha}$ estimates
for Monge-Amp\`ere equation \cite{Caf, JW}, we have
interior $W^{2, p}$ estimate for $u$ for some $p>1$ and
$C^{2, \alpha}$ estimate when $f\in{C^\alpha(\overline\Omega)}$.
Then the interior $W^{4, p}$ and $C^{4,\alpha}$ estimates
follow from the standard elliptic regularity theory.
Note that $\det D^2u$ is constant near the boundary of $\Omega$,
we also have the boundary $W^{4, p}$ and $C^{4,\alpha}$ estimates
by \cite{CNS, GT, K}.
In conclusion, we have
\beq
\|u\|_{W^{4, p}(\Omega)}\leq C,
\eeq
where $C$ depends on $n$, $p$, $\varphi$, $\psi$ and $f$.
and
\beq
\|u\|_{C^{4, \alpha}(\overline\Omega)}\leq C
\eeq
when $f\in{C^\alpha(\overline\Omega)}$,
where $C$ depends on $n$, $\alpha$, $\varphi$, $\psi$ and $f$.

Now we use the degree theory to prove the existence of
solutions to the approximating problem (8.5), (8.6).

For any positive $w\in{C^{0, 1}(\overline \Omega)}$,
let $u = u_w$ be the solution of (8.6) with $u = \varphi$ on $\partial \Omega$.
Next, let $w_t$, $t\in{[0, 1]}$, be the solution of
\beq
U^{ij}w_{ij}=t f \   \text{in}\  \Omega, \
w_t=t\psi +(1-t) \  \text{on}\  \partial\Omega.
\eeq
Therefore, we have a compact mapping
$$T_t: w\in{C^{0, 1}(\overline\Omega)}\longrightarrow
w_t\in{C^{0, 1}(\overline\Omega)}.$$
By estimate (8.10), the degree $deg(T_t, B_R, 0)$ is well defined,
where $B_R$ is the set of all functions satisfying
$\|w\|_{C^{0, 1}(\overline\Omega)}\leq R$.
When $t=0$, $T_0$ has a unique fixed point $w=1$ by (8.12).
Hence, $deg(T_0, B_R, 0)=1$. By degree theory, we have
$deg(T_1, B_R, 0)=1$. Namely, there is a unique solution when $t=1$.
The proposition follows.
\end{proof}

Finally, taking $k\rightarrow \infty$, we obtain

\begin{theo}
The second boundary problem (8.1)-(8.4) admits a solution
$u\in{W^{2,p}_{loc}}\cap C^{0,1}(\overline \Omega)$$(p>1)$
with $\det D^2u \in{C^0(\overline \Omega)}$.
Moreover, if $f\in{C^\alpha(\overline \Omega\times \Bbb{R})}$
$(0<\alpha<1)$,
then $u\in{C^{4,\alpha}(\Omega)\cap C^{0,1}(\overline \Omega)}$.
\end{theo}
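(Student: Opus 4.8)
The plan is to solve the second boundary value problem (8.1)--(8.4) by letting $k\to\infty$ in the family of solutions $(u_k,w_k)$ of the approximating problems (8.5), (8.6), whose existence was established in the preceding proposition, and to control the limit using the uniform-in-$k$ estimates (8.7), (8.8) of the lemma above together with the interior regularity theory for the (linearized) Monge-Amp\`ere equation.

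First I would translate (8.7) into a two-sided bound on the determinant. Since $w=G'(d)$ has the continuous, strictly decreasing, smooth inverse $d=g(w)$, the bound $C^{-1}\le w_k\le C$ gives $g(C)\le g(w_k)\le g(C^{-1})$ with both ends strictly positive. Hence on $\Omega_k$ we have $\det D^2u_k=g(w_k)$ pinched between two positive constants, while on $\Omega\setminus\Omega_k$ the right-hand side $\eta_kg(w_k)+(1-\eta_k)$ of (8.6) is a convex combination of $g(w_k)$ and $1$; thus $\det D^2u_k$ is pinched between two positive constants on all of $\Omega$, uniformly in $k$. With $u_k=\varphi$ on $\partial\Omega$ and $\Omega$ smooth and uniformly convex, comparison with the fixed Monge-Amp\`ere barriers solving $\det D^2v=\mathrm{const}$ with boundary data $\varphi$, together with the standard boundary gradient estimate, yields a uniform bound $\|u_k\|_{C^{0,1}(\overline\Omega)}\le C$. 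By convexity a subsequence of $u_k$ converges uniformly on $\overline\Omega$ to a convex function $u$, and $w_k\to w$ with $C^{-1}\le w\le C$.

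Next I would upgrade this to interior estimates uniform in $k$ on compact subsets. Fix $\Omega'\subset\subset\Omega$; for $k$ large $\Omega'\subset\Omega_k$, so on $\Omega'$ the pair $(u_k,w_k)$ satisfies $U_k^{ij}(w_k)_{ij}=f(x,u_k)$ with $\det D^2u_k$ pinched between positive constants, hence with the sections of $u_k$ uniformly comparable to ellipsoids. The Caffarelli-Gutierrez H\"older estimate \cite{CG} then gives a uniform interior $C^\alpha$ bound for $w_k$, hence for $\det D^2u_k=g(w_k)$; Caffarelli's $W^{2,p}$ (resp.\ $C^{2,\alpha}$ when $f\in C^\alpha$) estimates \cite{Caf1, JW} for the Monge-Amp\`ere equation, followed by the linear elliptic bootstrap \cite{GT}, then yield uniform $W^{4,p}_{loc}$ (resp.\ $C^{4,\alpha}_{loc}$) bounds on $u_k$ --- exactly the estimates used in the preceding proposition, but now with constants independent of $k$ on each $\Omega'$. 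Passing to a further subsequence, $u_k\to u$ in $C^{2,\alpha}_{loc}(\Omega)$ and in $W^{4,p}_{loc}$ (in $C^{4,\alpha}_{loc}$ when $f\in C^\alpha$), and $w_k\to w$ correspondingly; since $\eta_k\equiv1$ eventually on any fixed compact set, the limit satisfies $\det D^2u=g(w)$, i.e.\ $w=G'(\det D^2u)$, together with $U^{ij}w_{ij}=f(x,u)$ in $\Omega$ (the convergence $f(x,u_k)\to f(x,u)$ being handled by the uniform bound on $u_k$ and the monotonicity of $f$, or by continuity in the H\"older case), which are (8.1), (8.2). The condition $u=\varphi$ on $\partial\Omega$ passes to the limit by uniform convergence, and $w=\psi$ on $\partial\Omega$ follows from the uniform boundary Lipschitz bound (8.8), which survives in the limit and forces $w$, hence $\det D^2u=g(w)$, to extend continuously to $\overline\Omega$. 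Collecting, $u\in W^{2,p}_{loc}\cap C^{0,1}(\overline\Omega)$ with $\det D^2u\in C^0(\overline\Omega)$ in general, and $u\in C^{4,\alpha}(\Omega)\cap C^{0,1}(\overline\Omega)$ when $f\in C^\alpha(\overline\Omega\times\mathbb R)$.

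The main obstacle is making the interior estimates genuinely uniform in $k$: this rests on the two-sided bound (8.7) for the determinant (so that the Monge-Amp\`ere and linearized Monge-Amp\`ere estimates apply at all) and, more delicately, on a $k$-uniform modulus of strict convexity of $u_k$ on compact subsets, so that the sections entering the Caffarelli-Gutierrez and Caffarelli estimates are comparable to balls with $k$-independent constants; the pinched determinant together with the uniform Lipschitz bound supplies this, but it is the point requiring care. The cut-off $\eta_k$ in (8.6), which degenerates the equation near $\partial\Omega$, is harmless because it only alters the shrinking collar $\Omega\setminus\Omega_k$, so the interior estimates are genuinely interior while the boundary behaviour of $w$ is controlled separately and $k$-uniformly by (8.8).
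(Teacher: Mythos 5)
Your proposal is correct and follows essentially the same route as the paper: the paper's own proof of Theorem 8.3 is literally just ``taking $k\rightarrow\infty$'' in Proposition 8.2, with the uniform bounds of Lemma 8.1 doing all the real work, and you have filled in exactly that passage-to-the-limit argument (two-sided pinching of $\det D^2u_k$ via $g(w_k)$, uniform Lipschitz bound and subsequential convergence, $k$-uniform interior Caffarelli--Gutierrez/Caffarelli estimates on compacta where $\eta_k\equiv 1$, and recovery of $w=\psi$ on $\partial\Omega$ from the uniform boundary Lipschitz bound (8.8)).
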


\begin{rem}
The second boundary problem we consider here
is for the equation (2.6).  By checking the proof, it is easy to see that Theorem 8.3
also holds for Abreu's equation.
\end{rem}


\vskip10pt


\begin{thebibliography}{999}

\bibitem [Ab]{Ab}
Abreu, M.,
K\"ahler geometry of toric varieties and extremal metrics,
Inter. J. Math. 9 (1998), 641-651.

\bibitem [Caf1] {Caf1}
Caffarelli, L.A.,
Interior $W^{2,p}$ estimates for solutions of Monge-Amp\`ere equations,
Ann. Math. 131 (1990), 135-150.

\bibitem [Caf2] {Caf2}  Caffarelli, L.A.,
Boundary regularity of maps with convex potentials II,
Ann. of Math. 144 (1996),   453-496.

\bibitem [CG] {CG}
Caffarelli, L.A., Gutierrez, C.E.,
Properties of solutions of the linearized Mong-Amp\`ere equation,
Amer. J. Math. 119 (1997), 423-465.

\bibitem[CNS]{CNS}
Caffarelli, L.A, Nirenberg, L., Spruck, J.,
The Dirichlet problem for nonlinear second order elliptic equations I.
Monge-Amp\`ere equation,
Comm. Pure Appl. Math. 37 (1984), 369-402.


\bibitem [Cal] {Cal}
Calabi, E.,
Hypersurfaces with maximal affinely invariant area,
 Amer. J. Math. 104 (1982), 91-126.

\bibitem [CLS]{CLS}
Chen, B., Li, A., Sheng, L.,
The Kahler Metrics of constant scalar curvature on the Complex Torus,
arXiv:1008.2609 

\bibitem [Del] {Del}   
Delano\"e, Ph.,
Classical solvability in dimension two of the second boundary value problem associated with the Monge-Amp\`ere operator,
Ann. Inst. Henri Poincar\'e, Analyse Non Lin\'eaire, 8 (1991), 443-457.


\bibitem [D1] {D1}
Donaldson, S.K., 
Scalar curvature and stability of toric varieties,
J. Diff. Geom. 62 (2002), 289-349.

\bibitem [D2] {D2}
Donaldson, S.K.,
Interior estimates for solutions of Abreu's equation,
Collect. Math. 56 (2005), 103-142.

\bibitem [D3] {D3}
Donaldson, S.K.,
Extremal metrics on toric surfaces: a continuity method,
J. Diff. Geom. 79 (2008),  389-432.

\bibitem [D4] {D4}
Donaldson, S.K.,
Constant scalar curvature metrics on toric surfaces,
Geom. Funct. Anal. 19 (2009), 83-136.

\bibitem [GT]{GT}
Gilbarg D., Trudinger N.S.,
Elliptic partial differential equations of second order,
Springer-Verlag, New York, 1983.

\bibitem [G] {G} Gutierrez, C., The Monge-Ampere equation,
Progress in Nonlinear Differential Equations
and Their Applications 44 (2001), Birkh\"auser.


\bibitem [GS1] {GS1} Guan, B., Spruck, J., 
Boundary-value problems on $S^n$ for surfaces of constant Gauss curvature,
Ann. of Math. (2) 138 (1993),   601-624.


\bibitem [Li] {Li} Li, Y.Y., 
Some existence results for fully nonlinear elliptic equations of 
Monge-Amp\`ere type, 
Comm. Pure Appl. Math. 43 (1990),  233-271. 

\bibitem [Lu]{Lu}
Lutwak, E.,
Extended affine surface area, 
Adv. Math. 85 (1991), 39-68.

\bibitem [LR]{LR}
Ludwig, M., Reitzner, M.,
A characterization of affine surface area, 
Adv. Math. 147 (1999), 138-172. 

\bibitem [JW] {JW}
Jian, H.Y., Wang, X.J.,
Continuity estimates for the Monge-Ampre equation,
SIAM J. Math. Anal. 39 (2007), 608-626.

\bibitem [K]{K}
Krylov, N.V.,
Nonlinear elliptic and parabolic equations of the second order,
Reidel, Dordrecht-Boston, 1987.

\bibitem [P]{P}
Pogorelov, A.V.,
The multidimensional Minkowski problem,
Wiley, New York, 1978.

\bibitem [S]{S} Savin, O., 
The obstacle problem for Monge Ampere equation, 
Calc. Var. Partial Differential Equations 22 (2005),   303-320.


\bibitem [TW1] {TW1}
Trudinger, N.S., Wang, X.J.,
The Bernstein problem for affine maximal hypersurfaces,
Invent.  Math. 140 (2000), 253-289.


\bibitem [TW2] {TW2}
Trudinger, N.S., Wang, X.J.,
The affine plateau problem,
J. Amer. Math. Soc. 18 (2005), 253-289.

\bibitem [TW3] {TW3}
Trudinger, N.S., Wang, X.-J.,
The Monge-Ampere equations and its geometric applications,
Handbook of geometric analysis,
International Press, 2008, pp. 467-524.

\bibitem [TW4] {TW4}
Trudinger, N.S., Wang, X.J.,
Boundary regularity for the Monge-Amp\`ere and 
affine maximal surface equations, 
Ann. of Math. (2) 167 (2008),  993-1028.

\bibitem [TW5] {TW5}
Trudinger, N.S., Wang, X.J.,
The affine plateau problem II,
preprint, April 2010.


\bibitem [U1] {U1} 
Urbas, J., 
Boundary regularity for solutions of the equation of 
prescribed Gauss curvature, 
Ann. Inst. H. Poincar\'e Anal. Non Lin\'eaire 8 (1991),  499-522. 

\bibitem [U2] {U2}  
Urbas, J.:
On the second boundary value problem for equations of 
Monge-Amp\`ere type,
J. Reine Angew. Math. 487 (1997), 115-124.

\bibitem [Z]{Z}
Ziemer, W.P.,
Weakly differentiable functions, Springer, 1989.

\bibitem[ZZ]{ZZ}
Zhou, B. and Zhu, X.H.,
Minimizing weak solutions for Calabi's extremal metrics on toric manifolds,
Cal. Var. Partial Differential Equations 32 (2008), 191-217.


\end{thebibliography}
\end{document}